\theoremstyle{plain}
\newtheorem{Theorem}{Theorem}
\newtheorem{theorem}[Theorem]{Theorem}
\newtheorem{proposition}[Theorem]{Proposition}
\newtheorem{corollary}[Theorem]{Corollary}
\newtheorem{lemma}[Theorem]{Lemma}
\theoremstyle{definition}
\newtheorem{example}[Theorem]{Example}
\newtheorem{definition}[Theorem]{Definition}
\newtheorem{remark}[Theorem]{Remark}
\theoremstyle{remark}
\begin{document}
\title{Weak convergence of topological measures}
\author{S. V. Butler, University of California, Santa Barbara } 
\date{May 20, 2020}
\subjclass[2010]{60B10, 60B05, 28A33, 28C15}
\keywords{weak convergence, topological measure, deficient topological measure, Aleksandrov's Theorem, Prokhorov's theorem, Prokhorov and Kantorovich-Rubenstein metrics, dense subset, nowhere dense subset}
\begin{abstract}
Topological measures and deficient topological measures are defined on open and closed subsets of a topological space, 
generalize regular Borel measures, and correspond to (non-linear in general) functionals that are linear on singly generated subalgebras 
or singly generated cones of functions.
They lack subadditivity, and many standard techniques 
of measure theory and functional analysis do not apply to them. 
Nevertheless, we show that many classical results of probability theory hold for topological and deficient topological measures. In particular, 
we prove a version of Aleksandrov's Theorem for equivalent definitions of weak convergence of deficient topological measures. 
We also prove a version of Prokhorov's Theorem which relates the existence of a weakly convergent subsequence 
in any sequence in a family of topological measures  
to the characteristics of being a uniformly bounded in variation and uniformly tight family.  
We define Prokhorov and Kantorovich-Rubenstein metrics and show that convergence in either of them   
implies weak convergence of (deficient) topological measures on metric spaces.  
We also generalize many known results about various dense and nowhere dense subsets of deficient topological measures.  
The present paper constitutes a first step to further research in probability theory and its applications in the context of topological measures and 
corresponding non-linear functionals.
\end{abstract}

\maketitle

\section{Introduction}

The origins of the theory of quasi-linear functionals and topological measures lie in mathematical axiomatization and interpretations of quantum physics
(\cite{vonN}, \cite{MackeyPaper}, \cite{MackeyBook}, \cite{Kadison}). In J. von Neumann's axiomatization of quantum mechanics, physical observables can be represented by the space $\mathcal{L}$ 
of Hermitian operators on a complex Hilbert space. The state of a physical system is represented by a positive normalized linear functional on
 $\mathcal{L}$. Some physicists, however, argued that the linearity of the functional, $\rho(A + B) = \rho(A) + \rho(B), \, A, B \in \mathcal{L} $,
 makes sense if observables $A$ and $B$ are simultaneously measurable, which means that $A,B$ are polynomials of the same $C \in \mathcal{L} $, 
 so $A,B$ belong to the subalgebra of $\mathcal{L} $ generated by $C$.
Mathematical interpretations of quantum physics by G. W. Mackey and R. V. Kadison 
%(see, for example, \cite{MackeyPaper}, \cite{MackeyBook}, \cite{Kadison}) 
led to very interesting mathematical problems, including 
the extension problem for probability measures in von Neumann algebras. 
This extension problem may be regarded as a special case of the linearity problem for physical states, 
which is closely related to the existence of quasi-linear functionals. 
J. F. Aarnes \cite{Aarnes:TheFirstPaper} introduced quasi-linear functionals (that are not linear) on $C(X) $ for a compact Hausdorff space $X$
and corresponding set functions, generalizing measures (initially called quasi-measures, now topological measures). 
He connected the two by establishing a representation theorem.  Aarnes's quasi-linear functionals 
are functionals that are linear on singly generated subalgebras, but (in general) not linear. 
For more information about physical interpretation of 
quasi-linear functionals see \cite{EntovPolterovich}, \cite{EnPoltZap}, \cite{Entov}, \cite{PoltRosenBook}, \cite{Aarnes:PhysStates69}, \cite{Aarnes:QuasiStates70}, \cite{Aarnes:TheFirstPaper}.
 
M. Entov and L. Polterovich first linked the theory of quasi-linear functionals to symplectic topology. 
They introduced symplectic quasi-states and partial symplectic quasi-states (\cite{EntovPolterovich}), which are subclasses of quasi-linear functionals.
(On a symplectic manifold that is a closed oriented surface every normalized quasi-linear functional is a symplectic quasi-state, 
see \cite[Chapter 5]{PoltRosenBook}). Article \cite{EntovPolterovich} was followed by numerous papers and a monograph \cite{PoltRosenBook},
and many authors have investigated and used various aspects of symplectic quasi-states and topological measures: 
their properties, their connection to spectral numbers and homogeneous quasi-morphisms, 
ways of constructing and approximating symplectic quasi-states, etc.  
Symplectic quasi-states can be used as a measurement of Poisson commutativity, 
and topological measures can be used to 
distinguish Lagrangian knots that have identical classical invariants (\cite[Chapters 4,6]{EntovPolterovich}). 
Symplectic quasi-states and topological measures play an important role in function theory on symplectic manifolds. 

Deficient topological measures are generalizations of topological measures. They were first defined and used by 
A. Rustad and O. Johansen  (\cite{OrjanAlf:CostrPropQlf}) and later independently reintroduced and further developed by  M. Svistula  
(\cite{Svistula:Signed},  \cite{Svistula:DTM}).  Deficient topological measures are not only interesting by themselves, but also provide an essential
framework for studying topological measures and quasi-linear functionals. 
Topological measures and deficient topological measures generalize regular Borel measures
and correspond to functionals that are linear on singly generated subalgebras or singly generated cones of functions. 
These non-linear functionals can be 
described in several ways, including symmetric and asymmetric Choquet integrals, see \cite[pp. 62, 87]{DD}   
and \cite[Corollary 8.5, Theorem 8.7, Remark 8.11]{Butler:ReprDTM}.
Deficient topological measures are not supermodular, and their domains 
are not closed under intersection and union; for these and other reasons, results of Choquet theory do not automatically translate 
for functionals representing deficient topological measures. 
It is interesting that, with different proof methods, one may obtain results that are typical for, stronger than, or strikingly different 
from Choquet theory results.  

Topological measures and deficient topological measures are defined on open and closed subsets of a topological space, 
which means that there is no algebraic structure on the  domain.  
They lack subadditivity and other properties typical for measures, and many standard techniques 
of measure theory and functional analysis do not apply to them. 
Nevertheless, we show that many classical results of probability theory hold for topological and deficient topological measures. In particular, 
we prove versions of 
Aleksandrov's Theorem for equivalent definitions of weak convergence of  topological and deficient topological measures. 
We also prove a version of Prokhorov's Theorem which relates the existence of a weakly convergent subsequence in any sequence
in a family of topological measures  
to the characteristics of being a uniformly bounded in variation and uniformly tight family.  
We define Prokhorov and Kantorovich-Rubenstein metrics and show that convergence in either of them   
implies weak convergence of deficient topological measures.  
We also generalize many known results about various dense and nowhere dense subsets of deficient topological measures.  

The present paper constitutes a first step to further research in probability theory and its applications in the context of topological measures and 
corresponding non-linear functionals.

In this paper $X$ is a locally compact space Hausdorff space.
By $C(X)$ we denote the set of all real-valued continuous functions on $X$ with the uniform norm, 
%by $C_b(X)$ the set of bounded continuous functions on $X$,   
by $C_0(X)$ the set of continuous functions on $X$ vanishing at infinity, 
by $C_c(X)$ the set of continuous functions with compact support, and  by 
$C_0^+(X)$ the collection of all nonnegative functions from $C_0(X)$.
%$Z(f)$ and $Coz(f)$ stand for the zero and the cozero sets of the function $f$, i.e. 
%$ Z(f) = \{ x \in X: \, f(x) = 0 \}$, $Coz(f) = X \setminus Z(f).$

When we consider maps into extended real numbers we assume that any such map is not identically $\infty$. 

We denote by $\overline E$ the closure of a set $E$, and by $ \bigsqcup$ a union of disjoint sets.
A set $A \subseteq X$ is called bounded if $\overline A$ is compact. 
We denote by $id$ the identity function $id(x) = x$, 
and by $1_K$ the characteristic function of a set $K$. By $ supp \,  f $ we mean $ \overline{ \{x: f(x) \neq 0 \} }$.
We say that $Y$ is dense in $Z$ if $Z \subseteq \overline Y$.

Several collections of sets are used often.   They include:
$\mathscr{O}(X)$;
$\mathscr{C}(X)$; and
$\mathscr{K}(X)$-- 
the collection of open subsets of $X$;  the collection of closed subsets of   $X $;
and the collection of compact subsets of   $X $, respectively.

\begin{definition} \label{MDe2}
Let $X$ be a  topological space and $\nu$ be a set function on a family $\mathcal{E}$ of subsets of $X$ that 
contains $\mathscr{O}(X) \cup \mathscr{C}(X)$
with values in $[0, \infty]$. 
We say that 
\begin{itemize}
\item
$\nu$ is compact-finite if $ \nu(K) < \infty$ for any $ K \in \mathscr{K}(X)$;
\item
$\nu$ is simple if it only assumes  values $0$ and $1$;
\item
$ \nu$ is finite if $ \nu(X) < \infty$; 
\item
$\nu$ is inner regular (or inner compact regular) 
if $\nu(A) = \sup \{ \nu(C) :  C \subseteq A, C \in \mathscr{K}(X)\}$ for $A \in \mathcal{E}$;
\item
$\nu$ is inner closed regular  
if $\nu(A) = \sup \{ \nu(C) : C  \subseteq A, C \in  \mathscr{C}(X) \}$ for $A \in \mathcal{E}$;
\item
$\nu$ is outer regular  if 
$\nu(A) = \inf \{ \nu(U) : A  \subseteq U, U \in  \mathscr{O}(X) \}$   for $A \in \mathcal{E}$.
%for every $A \in \calA$

\end{itemize}
\end{definition}

\begin{definition}
A measure on $X$ is a countably additive set function on a $\sigma$-algebra of subsets of $X$ with values in $[0, \infty]$.  
A Borel measure on $X$ is a measure on the Borel $\sigma$-algebra on $X$.  
A Radon measure  $m$  on $X$ is  a compact-finite Borel measure that is outer regular on all Borel sets, and inner regular on all open sets, i.e.
$m(K) < \infty$ for every compact $K$, 
$ m(E) = \inf \{ m(U): E \subseteq U, U \text{  is open} \} $ for every Borel set $E$, and 
$m(U) = \sup \{  m(K): K \subseteq U, K  \text{  is compact} \}$ for every open set $U$. 
For a Borel measure  $m$ that is inner regular on all open sets (in particular, for a Radon measure)  we define $supp \ m$,  the support of  $m$, to be 
the complement of the largest open set $W$ such that $m(W) =0$. 
\end{definition}

For the following fact see, for example,  ~\cite[Chapter XI, 6.2]{Dugundji} and \cite[Lemma 7]{Butler:TMLCconstr}.
\begin{lemma} \label{easyLeLC}
Let $K \subseteq U, \ K \in \mathscr{K}(X),  \ U \in \mathscr{O}(X)$ in a locally compact space $X$.
Then there exists a set  $V \in \mathscr{O}(X)$ such that $ C = \overline V$ is compact and  
$ K \subseteq V \subseteq \overline V \subseteq U. $
If $X$ is also locally connected, and either $K$ or $U$ is connected, then $V$ and $C$ can be chosen to be connected. 
\end{lemma}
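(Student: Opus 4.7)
The plan is to handle the unqualified statement first via a standard compactness argument, then enhance it under the local-connectedness hypothesis by arranging the ingredients of that argument to be connected. I would organise the work in three stages.

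First, for the main statement, I would exploit local compactness pointwise on $K$. At each $x \in K$, since $X$ is locally compact Hausdorff (hence regular) and $U$ is an open neighbourhood of $x$, there exists an open $W_x$ with $x \in W_x \subseteq \overline{W_x} \subseteq U$ and $\overline{W_x}$ compact. Compactness of $K$ extracts a finite subcover $W_{x_1}, \ldots, W_{x_n}$, and setting $V := \bigcup_{i=1}^n W_{x_i}$ gives an open set with $K \subseteq V$ and $\overline{V} = \bigcup_{i=1}^n \overline{W_{x_i}}$, a finite union of compacts contained in $U$; hence $\overline{V}$ is compact and $\overline{V} \subseteq U$.

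Second, if $X$ is locally connected and $K$ is connected, I would additionally shrink each $W_x$ to a connected open neighbourhood of $x$ with compact closure in $U$ (possible by local connectedness applied inside the $W_x$ above). Each $W_{x_i}$ then contains the point $x_i \in K$, so each $W_{x_i}$ meets the connected set $K$; therefore $V = K \cup \bigcup_i W_{x_i}$ is a union of connected sets all intersecting the connected set $K$, and is connected. Its closure $C = \overline{V}$ is connected as the closure of a connected set.

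Third, and this is the main obstacle, I would treat the case when $U$ (but not necessarily $K$) is connected. Here one cannot simply take the union of finitely many connected neighbourhoods, since they may lie in distinct components of the preliminary cover. The plan is a clopen-in-$U$ argument: fix any $p \in U$ and let
\[
A = \bigl\{ x \in U : \exists\, W \in \mathscr{O}(X),\ W \text{ connected},\ \{p,x\} \subseteq W \subseteq \overline{W} \subseteq U,\ \overline{W}\ \text{compact}\bigr\}.
\]
Using a connected open neighbourhood $N$ of a test point with $\overline{N}$ compact in $U$ (available by local compactness plus local connectedness), one shows $A$ is open (if $x \in A$ with witness $W_x$, then $W_x \cup N$ witnesses every point of $N$) and closed in $U$ (if some $y \in N$ lies in $A$ with witness $W_y$, then $W_y \cup N$ witnesses every point of $N$, including any purported limit point outside $A$). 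Since $p \in A$ and $U$ is connected, $A = U$. I would then cover the compact set $K$ by finitely many such connected witnesses $W_{x_1}, \ldots, W_{x_n}$ and take $V := \bigcup_i W_{x_i}$; all $W_{x_i}$ share the point $p$, so $V$ is connected, and the finite union of compact closures stays inside $U$. Again $C = \overline{V}$ inherits connectedness, completing the proof.
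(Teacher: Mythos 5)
Your proof is correct. Note that the paper itself gives no proof of this lemma at all --- it simply cites Dugundji (Chapter XI, 6.2) and Lemma 7 of the author's earlier paper on solid-set functions --- so there is no in-paper argument to compare against; your write-up supplies what the paper outsources. Your first stage is the textbook local-compactness-plus-finite-subcover argument and is exactly what the cited sources do. For the connected refinement, the case of connected $K$ is handled cleanly (the union $\bigcup_i W_{x_i}$ already contains $K$, so your $V$ is open, and connectedness follows since each connected $W_{x_i}$ meets the connected $K$). The case of connected $U$ is the only place requiring a real idea, and your clopen "chaining" set $A$ does the job: $p\in A$ by local compactness plus local connectedness, openness and closedness of $A$ in $U$ both follow from patching a witness onto a connected relatively compact neighbourhood, and then finitely many witnesses through the common point $p$ give a connected $V$ with compact closure in $U$. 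This is a standard and valid route; the only stylistic quibble is that your phrasing of the closedness step ("any purported limit point outside $A$") should be read as: for $y\in\overline{A}\cap U$, a connected relatively compact neighbourhood $N$ of $y$ meets $A$ at some $z$, and $W_z\cup N$ then witnesses $y\in A$. As stated, the argument is complete.
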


\begin{definition}\label{DTM}
A  deficient topological measure on a locally compact space $X$ is a set function
$\nu:  \mathscr{C}(X) \cup \mathscr{O}(X) \longrightarrow [0, \infty]$ 
which is finitely additive on compact sets, inner compact regular, and 
outer regular, i.e. :
\begin{enumerate}[label=(DTM\arabic*),ref=(DTM\arabic*)]
\item \label{DTM1}
if $C \cap K = \emptyset, \ C,K \in \mathscr{K}(X)$ then $\nu(C \sqcup K) = \nu(C) + \nu(K)$; 
\item \label {DTM2} 
$ \nu(U) = \sup\{ \nu(C) : \ C \subseteq U, \ C \in \mathscr{K}(X) \} $
 for $U\in\mathscr{O}(X)$;
\item \label{DTM3} 
$ \nu(F) = \inf\{ \nu(U) : \ F \subseteq U, \ U \in \mathscr{O}(X) \} $  for  $F \in \mathscr{C}(X)$.
\end{enumerate}
\end{definition} 

\noindent
Clearly, for a closed set $F$, $ \nu(F) = \infty$ iff $ \nu(U) = \infty$ for every open set $U$ containing $F$.
If two deficient topological measures agree on compact sets (or on open sets) then they coincide.

\begin{definition}\label{TMLC}
A topological measure on $X$ is a set function
$\mu:  \mathscr{C}(X) \cup \mathscr{O}(X)  \longrightarrow  [0,\infty]$ satisfying the following conditions:
\begin{enumerate}[label=(TM\arabic*),ref=(TM\arabic*)]
\item \label{TM1} 
if $A,B, A \sqcup B \in \mathscr{K}(X) \cup \mathscr{O}(X) $ then
$
\mu(A\sqcup B)=\mu(A)+\mu(B);
$
\item \label{TM2}  
$
\mu(U)=\sup\{\mu(K):K \in \mathscr{K}(X), \  K \subseteq U\}
$ for $U\in\mathscr{O}(X)$;
\item \label{TM3}
$
\mu(F)=\inf\{\mu(U):U \in \mathscr{O}(X), \ F \subseteq U\}
$ for  $F \in \mathscr{C}(X)$.
\end{enumerate}
\end{definition} 

By $ \mathbf{DTM}(X)$ and $ \mathbf{TM}(X)$ we denote, respectively, the collections of all finite deficient topological measures and all  finite topological measures on $X$.

The following two theorems from \cite[Section 4]{Butler:DTMLC} give criteria for a deficient topological measure to be a topological measure or a measure.

\begin{theorem} \label{DTMtoTM}
Let $X$ be compact, and $\nu$ a deficient topological measure. The following are equivalent:
\begin{enumerate}
\item[(a)]
$\nu$ is a real-valued topological measure;
\item[(b)]
$\nu(X) = \nu(C)  + \nu(X \setminus C), \quad C \in \mathscr{C}(X);$ 
\item[(c)]
$\nu(X) \le \nu(C)  + \nu(X \setminus C),\quad C \in \mathscr{C}(X).$  
\end{enumerate}
Let $X$ be locally compact, and $\nu$ a deficient topological measure. 
The following are equivalent:
\begin{enumerate}
\item[(a)]
$\nu$ is a topological measure;
\item[(b)]
$\nu(U) = \nu(C)  + \nu(U \setminus C), \quad C \in \mathscr{K}(X),\quad U \in \mathscr{O}(X);$ 
\item[(c)]
$\nu(U) \le \nu(C)  + \nu(U \setminus C),\quad C \in \mathscr{K}(X),\quad U \in \mathscr{O}(X).$
\end{enumerate}
\end{theorem}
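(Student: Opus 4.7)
The implications (a)$\Rightarrow$(b) and (b)$\Rightarrow$(c) are immediate. For (a)$\Rightarrow$(b) in the compact case, apply \ref{TM1} with $A=C$ (compact, since closed in a compact Hausdorff space) and $B=X\setminus C\in\mathscr{O}(X)$, noting $A\sqcup B=X$ is in $\mathscr{K}(X)\cup\mathscr{O}(X)$; in the locally compact case, take $A=C\in\mathscr{K}(X)$ and $B=U\setminus C\in\mathscr{O}(X)$ (open since $C$ is closed), with $A\sqcup B=U\in\mathscr{O}(X)$. The step (b)$\Rightarrow$(c) is trivial.

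The substantive step is (c)$\Rightarrow$(a). I would first upgrade (c) to the equality (b), then derive \ref{TM1} from (b). For the upgrade, given $C\in\mathscr{K}(X)$ and $U\in\mathscr{O}(X)$ with $C\subseteq U$, for every compact $K'\subseteq U\setminus C$ the disjoint compacts $C,K'$ satisfy $C\sqcup K'\subseteq U$, so \ref{DTM1} combined with the monotonicity of DTMs (a consequence of \ref{DTM2} and \ref{DTM3}) gives $\nu(C)+\nu(K')=\nu(C\sqcup K')\le\nu(U)$. Taking the supremum over $K'$ via \ref{DTM2} applied to $U\setminus C$ yields $\nu(C)+\nu(U\setminus C)\le\nu(U)$, which combined with (c) is equality. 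The compact case is the specialization $U=X$.

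To derive \ref{TM1} from (b), I carry out a case analysis on whether each of $A,B,A\sqcup B$ is compact or open. When all three are compact, \ref{DTM1} applies directly. When $A$ is compact, $B$ open, $A\sqcup B$ open, taking $U=A\sqcup B$ and $C=A$ in (b) is the conclusion. When $A,B$ are both open with open union, the direction "$\ge$" comes from applying (b) on $A\sqcup B$ to any compact $K\subseteq A$ and taking the sup via \ref{DTM2} on $A$; for "$\le$", I decompose any compact $K\subseteq A\sqcup B$ as $(K\cap A)\sqcup(K\cap B)$ — each piece is the complement in $K$ of an open subset of $K$, hence closed in $K$ and thus compact — apply \ref{DTM1}, and take the sup via \ref{DTM2} on $A\sqcup B$. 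The remaining mixed cases in which $A\sqcup B$ is compact collapse to \ref{DTM1} whenever both pieces end up clopen in $A\sqcup B$, which occurs whenever both pieces are open (since each is then closed in the compact $A\sqcup B$, hence compact).

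The one genuinely delicate case is $A\in\mathscr{K}(X)$, $B\in\mathscr{O}(X)$, $A\sqcup B\in\mathscr{K}(X)$ (this occurs, e.g., $A=\{0,1\}$, $B=(0,1)$ in $\mathbb R$), where (b) cannot be applied directly because $A\sqcup B$ is not open. The plan is to use local compactness (Lemma~\ref{easyLeLC}) to enclose $A\sqcup B$ in an open set $V$ with $\overline V$ compact, then apply (b) twice inside $V$: once with closed set $A\sqcup B$ giving $\nu(V)=\nu(A\sqcup B)+\nu(V\setminus(A\sqcup B))$, and once with closed set $A$ giving $\nu(V)=\nu(A)+\nu(V\setminus A)$; the open-plus-open case just established splits $\nu(V\setminus A)=\nu(B)+\nu(V\setminus(A\sqcup B))$, and since everything sits inside $\overline V$ and is therefore finite by compact-finiteness, the common term cancels and leaves $\nu(A\sqcup B)=\nu(A)+\nu(B)$. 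This cancellation step, together with the compact-subset decomposition in the open-plus-open case, is where I expect the main obstacle to lie.
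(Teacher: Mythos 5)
First, a point of reference: the paper does not actually prove Theorem \ref{DTMtoTM} — it is imported from \cite[Section 4]{Butler:DTMLC} — so there is no in-paper argument to compare against, and I am assessing your proposal on its own terms. The overall architecture is sound: (a)$\Rightarrow$(b) via \ref{TM1}; the upgrade of (c) to (b) using \ref{DTM1}, monotonicity, and \ref{DTM2} applied to $U\setminus C$; the open-plus-open case via the decomposition of a compact $K\subseteq A\sqcup B$ into the relatively closed (hence compact) pieces $K\cap A$ and $K\cap B$; and the reduction of the case ``$A,B$ open, $A\sqcup B$ compact'' to \ref{DTM1}. (The one configuration your enumeration skips — $A,B$ compact with $A\sqcup B$ open — is harmless, since \ref{DTM1} imposes no condition on the union and covers it verbatim.)

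The genuine gap is in your delicate case $A\in\mathscr{K}(X)$, $B\in\mathscr{O}(X)$, $A\sqcup B\in\mathscr{K}(X)$: you cancel the common term $\nu(V\setminus(A\sqcup B))$ from the two applications of (b) ``by compact-finiteness,'' but compact-finiteness is not a hypothesis of the theorem. Definition \ref{DTM} permits a deficient topological measure to take the value $\infty$ on compact sets (compare Definition \ref{MDe2} and the clause of Theorem \ref{subaddit} that singles out the compact-finite case), so $\nu(\overline V)$ — and with it $\nu(V\setminus(A\sqcup B))$ — may be infinite, and the cancellation $\infty = \nu(A\sqcup B)+\infty = \nu(A)+\nu(B)+\infty$ proves nothing. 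The step is repairable: if $\nu(A\sqcup B)<\infty$, use outer regularity \ref{DTM3} to choose the enclosing open set $V\supseteq A\sqcup B$ with $\nu(V)<\infty$ (rather than merely with compact closure); then $\nu(V\setminus(A\sqcup B))\le\nu(V)<\infty$ by monotonicity and your cancellation is legitimate. If $\nu(A\sqcup B)=\infty$ and $\nu(A)=\infty$, both sides of \ref{TM1} are $\infty$ by monotonicity. In the remaining case $\nu(A\sqcup B)=\infty$, $\nu(A)<\infty$, take an open $W_A\supseteq A$ with $\nu(W_A)<\infty$; applying (b) with $C=A$ inside the open set $W_A\cup B\supseteq A\sqcup B$ and then your open-additivity result to $(W_A\cup B)\setminus A = B\sqcup\bigl(W_A\setminus(A\sqcup B)\bigr)$ forces $\nu(B)=\infty$, so again both sides are $\infty$. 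None of your other steps need this care, as they are one-sided inequalities or direct applications of \ref{DTM1}.
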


\begin{theorem} \label{subaddit}
Let $\mu$ be a deficient topological measure on a locally compact space $X$. 
The following are equivalent: 
\begin{itemize}
\item[(a)]
If $C, K$ are compact subsets of $X$, then $\mu(C \cup K ) \le \mu(C) + \mu(K)$.
\item[(b)]
If $U, V$ are open subsets of $X$,  then $\mu(U \cup V) \le \mu(U) + \mu(V)$.
\item[(c)]
$\mu$ admits a unique extension to an inner regular on open sets, outer regular Borel measure 
$m$ on the Borel $\sigma$-algebra of subsets of $X$. 
$m$ is a Radon measure iff $\mu$ is compact-finite. 
If $\mu$ is finite then $m$ is an outer regular and inner closed regular Borel measure.
\end{itemize}
\end{theorem}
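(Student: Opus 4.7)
The first step is to dispose of $\text{(c)}\Rightarrow\text{(a),(b)}$ trivially, since a Borel measure is subadditive. The bulk of the work lies in establishing (a)$\Leftrightarrow$(b) and in constructing the Borel extension from the subadditivity hypothesis.

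For (a)$\Rightarrow$(b), given open $U,V$ I would approximate $\mu(U\cup V)$ from within via \ref{DTM2} by a compact $C\subseteq U\cup V$ and split $C$ between $U$ and $V$ using two applications of Lemma~\ref{easyLeLC}: first thicken the compact set $C\setminus V\subseteq U$ to an open set $W_1$ with $\overline{W_1}\subseteq U$ compact, then thicken the compact set $C\setminus W_1\subseteq V$ to $W_2$ with $\overline{W_2}\subseteq V$ compact. Setting $C_i=\overline{W_i}$ gives $C\subseteq C_1\cup C_2$, and monotonicity combined with (a) bounds $\mu(C)\le\mu(C_1\cup C_2)\le\mu(C_1)+\mu(C_2)\le\mu(U)+\mu(V)$; passing to the sup over $C$ via \ref{DTM2} yields (b). The reverse (b)$\Rightarrow$(a) is simpler: approximate compact $C,K$ from without by open neighborhoods via \ref{DTM3}, apply (b), and take the infimum (the case of an infinite value is vacuous).

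For (b)$\Rightarrow$(c), I would build the extension by the Carath\'eodory method, using as candidate outer measure
\[ m^*(E) \;=\; \inf\{\mu(U):E\subseteq U\in\mathscr{O}(X)\}. \]
Countable subadditivity of $\mu$ on open sets (hence of $m^*$) comes from combining finite subadditivity (b) with upward continuity of $\mu$ on open sets: if open sets $V_n\nearrow V$, then \ref{DTM2} produces, for each $\epsilon>0$, a compact $K\subseteq V$ with $\mu(K)>\mu(V)-\epsilon$, and compactness forces $K\subseteq V_n$ for some $n$, giving $\mu(V_n)\to\mu(V)$. I would then show every open $U$ is $m^*$-measurable. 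The critical intermediate inequality is $\mu(W)\ge\mu(K)+\mu(W\setminus K)$ whenever $K$ is compact and $W\supseteq K$ open: for any compact $L\subseteq W\setminus K$, the disjoint-additivity \ref{DTM1} gives $\mu(K)+\mu(L)=\mu(K\sqcup L)\le\mu(W)$, and one takes the sup on $L$ via \ref{DTM2}. Given $A\subseteq X$ and open $W\supseteq A$ nearly realizing $m^*(A)$, applying this inequality with $K$ a nearly-optimal compact subset of $W\cap U$ (so that $W\setminus K$ is open and contains $A\setminus U$) yields $m^*(A)+\epsilon\ge\mu(W)\ge m^*(A\cap U)+m^*(A\setminus U)-\epsilon$, and letting $\epsilon\to 0$ completes measurability.

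The resulting Borel measure $m$ agrees with $\mu$ on open sets by monotonicity and on closed sets by \ref{DTM3}; uniqueness is automatic because any outer regular Borel extension is determined by its values on open sets. Compact-finiteness of $m$ is equivalent to that of $\mu$, so the Radon criterion reduces to the regularity already built into the construction, and when $\mu$ is finite the inner closed regularity of $m$ follows by the standard complementation $m(E)=m(X)-m(X\setminus E)$ applied to the outer regular formula. I expect the main obstacle to be the measurability step, where the chosen subadditivity hypothesis and the disjoint-additivity \ref{DTM1} inherent to a deficient topological measure must be combined carefully; the rest of the argument is a disciplined application of inner/outer regularity of $\mu$.
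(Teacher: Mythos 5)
Your argument is correct. Note, though, that this paper does not prove Theorem~\ref{subaddit} at all: it is quoted from \cite[Section 4]{Butler:DTMLC}, so there is no in-paper proof to compare against. Your route is the natural one and all the key steps check out: (a)$\Leftrightarrow$(b) by shuttling between compact and open sets via Lemma~\ref{easyLeLC} and the regularity axioms \ref{DTM2}--\ref{DTM3}; and for (b)$\Rightarrow$(c), the Carath\'eodory construction from $m^*(E)=\inf\{\mu(U):E\subseteq U\in\mathscr{O}(X)\}$, where (b) together with $\tau$-smoothness on open sets gives countable subadditivity of $m^*$, while the measurability of open sets rests only on the superadditivity $\mu(W)\ge\mu(K)+\mu(W\setminus K)$ coming from \ref{DTM1} and \ref{DTM2} (it is worth being explicit that the subadditivity hypothesis enters only in making $m^*$ an outer measure, not in the measurability test). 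The remaining assertions (uniqueness via outer regularity, the Radon criterion, and inner closed regularity by complementation in the finite case) follow exactly as you indicate.
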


\begin{remark} \label{Vloz}
Let $X$ be locally compact, and let $ \mathscr{M}$  be the collection of all Borel measures on $X$ that are inner regular on open sets and outer regular 
on all Borel sets. Thus, $  \mathscr{M}$ includes regular Borel measures and Radon measures. 
We denote by $M(X)$ the restrictions to $\mathscr{O}(X) \cup \mathscr{C}(X)$ of measures from $ \mathscr{M}$, and by $ \mathbf{M}(X)$ the set of all finite measures from $M(X)$.
We have:
\begin{align} \label{incluMTD}
 M(X) \subsetneqq  TM(X)  \subsetneqq  DTM(X).
\end{align}
The inclusions follow from the definitions. 
%Inclusions in (\ref{incluSMTD}) are proper by Lemma \ref{SDTMnSTMex} and Example \ref{STMnotSM}.
When $X$ is compact, there are examples of topological measures that are not measures 
and of deficient topological measures that are not topological measures in numerous papers, 
beginning with \cite{Aarnes:TheFirstPaper}, \cite{OrjanAlf:CostrPropQlf}, and  \cite {Svistula:Signed}.
When $X$ is locally compact, see \cite{Butler:TechniqLC},
Sections 5 and 6 in \cite{Butler:DTMLC}, and  Section 9 in \cite{Butler:TMLCconstr} 
for more information on proper inclusion in (\ref{incluMTD}), criteria for a deficient topological measure to be a measure from $ M(X)$, and various examples.
\end{remark}

\begin{remark} \label{tausm}
In \cite[Section 3]{Butler:DTMLC} we show that a deficient topological measure $ \nu$ is $\tau$-smooth on compact sets
(i.e. if  a net $K_\alpha \searrow K$ , where $ K_\alpha, K \in \mathscr{K}(X)$ then  $\mu(K_\alpha) \rightarrow \mu(K)$), 
and also $\tau$-smooth on open sets (i.e.  if a net $U_\alpha \nearrow U$, where $U_\alpha, U \in \mathscr{O}(X)$ then $\mu(U_\alpha) \rightarrow \mu(U)$). 
In particular, a deficient topological measure is additive on open sets.
A deficient topological measure $ \nu$ is also superadditive, i.e. 
if $ \bigsqcup_{t \in T} A_t \subseteq A, $  where $A_t, A \in \mathscr{O}(X) \cup \mathscr{C}(X)$,  
and at most one of the closed sets (if there are any) is not compact, then 
$\nu(A) \ge \sum_{t \in T } \nu(A_t)$. 
If $ F \in \mathscr{C}(X)$ and $C \in \mathscr{K}(X)$ are disjoint, then $ \nu(F) + \nu(C) = \nu ( F \sqcup C)$.
One may consult  \cite{Butler:DTMLC} for more properties of deficient topological measures on locally compact spaces.
\end{remark}

\begin{definition} \label{SDTMnorDe}
For  a deficient topological measure  $\mu$ we define $ \| \mu \| = \mu(X) = \sup \{ \mu(K): K \in \mathscr{K}(X) \}$.
\end{definition}

\begin{definition} \label{cqlf}
We call a  functional $\rho$ on $C_0(X)$  with values in $[ -\infty, \infty]$ (assuming at most one of $\infty, - \infty$) 
and $| \rho(0) | < \infty$ a p-conic quasi-linear functional if 
%it is orthogonally additive and monotone on nonnegative functions and 
%conic-linear on $A^+(f)$ for each $ f \in  C_0(X)$, i.e. 
\begin{enumerate}[label=(p\arabic*),ref=(p\arabic*)]
\item
If $f\, g=0, f, g  \ge 0$ then $ \rho(f+ g) = \rho(f) + \rho(g)$.
\item
If $0 \le g \le f$ then $\rho(g) \le \rho(f)$.
\item
For each $f$, if $g,h \in A^+(f), \ a,b \ge 0$ then $\rho(a g + bh) = a \rho(g) + b \rho(h)$.
Here $ A^+(f) = \{ \phi \circ f: \ \phi \in C(\overline{f(X)}), \phi  \mbox{   is non-decreasing}\} $, (with $ \phi(0) = 0 $ 
if $X$ is non-compact) is a cone generated by $f$.
\end{enumerate}

For a functional $\rho$ on $C_0(X)$ we consider $\| \rho \| =  \sup \{ | \rho(f) | : \  \| f \| \le 1 \} $ and we say $\rho$ is bounded if $\| \rho \| < \infty$.
Let $\mathbf{\Phi^+}(C_0^+(X))$ be the set of all  bounded p-conic quasi-linear $\rho$ functionals on $C_0^+(X)$. 

A real-valued map $\rho$ on $C_0(X)$
is a quasi-linear functional (or a positive quasi-linear functional) if 
\begin{enumerate}[label=(QI\arabic*),ref=(QI\arabic*)]
\item \label{QIpositLC}
$ f \ge 0 \Longrightarrow \rho(f)  \ge 0.$
\item \label{QIconsLC}
$\rho(a f) = a \rho(f)$ for $ a \in \mathbb{R}.$
\item \label{QIlinLC}
For each  $f $,   if $g,h \in B(f)$, then $\rho(h + g) =  \rho (h) + \rho (g)$.
Here  $B(f) =  \{ \phi \circ f : \,  \phi \in C(\overline{f(X)}) \} $  (with $ \phi(0) = 0 $ if $X$ is non-compact) is a subalgebra generated by $f$. 
\end{enumerate}
\end{definition}

\begin{remark} \label{RemBRT}

There is an order-preserving bijection between $\mathbf{DTM}(X)$ and $\mathbf{\Phi^+}(C_0^+(X))$. See \cite[Section 8]{Butler:ReprDTM}.
In particular, there is an order-preserving isomorphism between finite 
topological measures on $X$ and quasi-linear functionals on $C_0(X)$ of finite norm, and $\mu$ is a measure iff the 
corresponding functional is linear (see \cite[Theorem 8.7]{Butler:ReprDTM}, \cite[Theorem 3.9]{Alf:ReprTh}, and \cite[Theorem 15]{Svistula:DTM}).
We outline the correspondence.
\begin{enumerate}[label=(\Roman*),ref=(\Roman*)]
\item \label{prt1}
Given a finite deficient  topological measure $\mu$ on a locally compact space $X$ and $f \in C_b(X)$, define functions on $\mathbb{R}$:
$$ R_1 (t) = R_{1, \mu, f} (t) =  \mu(f^{-1} ((t, \infty) )), $$
$$ R_2 (t) =  R_{2,  \mu, f} (t) =\mu(f^{-1} ([t, \infty) )). $$
Let $r$ be the Lebesque-Stieltjes measure associated with $-R_1$, a regular Borel measure on $ \mathbb{R}$. The $ supp \ r \subseteq \overline{f(X)}$.
We define a functional on $C_b(X)$ (in particular, a functional on  $C_0(X)$):
\begin{align} \label{rfform}
\mathcal{R} (f) & = \int _{\mathbb{R}}  id \, dr = \int_{[a,b]} id \, dr  =   \int_a^b R_1 (t) dt + a \mu(X)  =  \int_a^b R_2 (t) dt + a \mu(X).  
\end{align}
where $[a,b]$ is any interval containing $f(X)$.
If $f(X) \subseteq [0,b]$ we have:
\begin{align*} 
%\label{rfformp}
 \mathcal{R} (f) = \int_{[0,b]}  id \, dr  =   \int_0^b R_1 (t) dt =   \int_0^b R_2 (t) dt.
\end{align*}
We call the functional $\mathcal{R}$ a quasi-integral (with respect to a deficient topological measure $ \mu$) and write:
\begin{align*} 
%\label{RFint}
\int_X f \, d\mu = \mathcal{R}(f) = \mathcal{R}_{\mu} (f) =  \int _{\mathbb{R}}  id \, dr.
\end{align*}
\item   \label{RHOsvva}
Functional $\mathcal{R} $ is non-linear. 
By   \cite[Lemma 7.7,  Theorem 7.10, Lemma 3.6, Lemma 7.12]{Butler:ReprDTM}  we have:
\begin{enumerate}
\item
$\mathcal{R} (f) $ is positive-homogeneous, i.e. $\mathcal{R} (cf)  = c \mathcal{R} (f) $ for $c \ge 0$ and $ f \in C_b(X)$. 
\item
$\mathcal{R} (0) =0$. 
\item 
$\mathcal{R}$ is monotone, i.e. if $ f \le g$  then $\mathcal{R} (f) \le \mathcal{R} (g)$ for $f, g \in C_b(X)$.
\item
$ \mu(X)  \cdot \inf_{x \in X} f(x)  \le \mathcal{R}(f)  \le \mu(X) \cdot \sup _{x \in X} f(x) $ for $f \in C_b(X)$. 
\item
If $f g = 0 $, where $ f, g \ge 0$  then $\mathcal{R} (f+g) = \mathcal{R} (f) + \mathcal{R} (g)$ for $f, g \in C_b(X)$; \\
if $f g = 0 $, where $f \ge 0, g \le 0$ or $ f, g \ge 0$, then $\mathcal{R} (f+g) = \mathcal{R} (f) + \mathcal{R} (g)$ for $f, g \in C_0(X)$.
\end{enumerate}
\item \label{mrDTM} 
A functional $\rho$ with values in $[ -\infty, \infty]$ (assuming at most one of $\infty, - \infty$) and $| \rho(0) | < \infty$ 
is called a d-functional if   
on nonnegative functions it is positive-homogeneous, monotone, and orthogonally additive, i.e. for $f, g \in D(\rho)$ (the domain of $ \rho$) we have: 
(d1) $f \ge 0, \ a > 0  \Longrightarrow  \rho (a f) = a \rho(f)$; 
(d2) $0 \le  g \le f \Longrightarrow  \rho(g) \le \rho(f) $;
(d3) $f \cdot g = 0, f,g \ge 0  \Longrightarrow  \rho(f + g) = \rho(f) + \rho(g)$. 

Let  $\rho$ be a d-functional with $  C_c^+(X) \subseteq D(\rho) \subseteq C_b(X)$. 
In particular, we may take functional $ \mathcal{R}$ on $  C_0^+(X)$. 
The corresponding
deficient topological measure $ \mu = \mu_{\rho}$ is given as follows: 

If $U$ is open, $ \mu_{\rho}(U) = \sup\{ \rho(f): \  f \in C_c(X), 0\le f \le 1,  supp \, f\subseteq U  \},$

if $F$ is closed, $ \mu_{\rho}(F) = \inf \{ \mu_{\rho}(U): \  F \subseteq U,  U \in \mathscr{O}(X) \}$. 

If $K$ is compact, $ \mu_{\rho}(K) = \inf \{ \rho(g): \   g \in C_c(X), g \ge 1_K \}  
= \inf \{ \rho(g): \   g \in C_c(X), 1_K \le g \le 1 \}. $
(See \cite[Section 5]{Butler:ReprDTM}.)
\end{enumerate}

If given a finite deficient topological measure $\mu$, we obtain $ \mathcal R$, and then $\mu_{ \mathcal R}$, then $ \mu = \mu_{ \mathcal R}$.
\end{remark}

\begin{remark} \label{LipQLF}
Integrals with respect to (deficient) topological measures on a locally compact space $X$ have Lipschitz property:
If $\mu$ is a finite deficient topological measure, $f, g \in C_c(X), \, f,g \ge 0, \, supp \, f, supp \, g \subseteq K$ where $K$ is compact, then 
$$ | \mathcal{R}(f) - \mathcal{R}(g) |  = |  \int_X f \, d\mu - int_X g \, d\mu | \le \| f - g \| \, \mu(K).$$ 
If $ \mu$ is a finite topological measure,  $f, g \in C_0(X)$ then
$$|  \int_X f \, d\mu - int_X g \, d\mu | \le  2 \| f - g \| \, \mu(X).$$
See \cite[Lemma 7.12]{Butler:ReprDTM}  and  \cite[Corollary 53]{Butler:DTMLC}.
\end{remark}

We would like to give some examples. 

\begin{definition}
A set $A$ is bounded if $\overline{A}$ is compact. 
If $X$ is locally compact, non-compact, a set $A$ is solid if $A$ is  connected, and $X \setminus A$ has only unbounded connected components.
If $X$ is compact, a set $A$ is solid if $A$ and $X \setminus A$ are connected.
\end{definition}

Many examples of topological measures that are not measures are obtained in the following way. Define a so-called solid-set function on 
bounded open solid and compact solid sets in a locally compact, connected, locally connected, Hausdorff space. 
A solid set function extends to a unique topological measure. 
See \cite[Definition 2.3, Theorem 5.1]{Aarnes:ConstructionPaper}, \cite[Definition 39, Theorem 48]{Butler:TMLCconstr}.

\begin{example} \label{ExDan2pt}
Suppose that  $ \lambda$ is the Lebesgue measure on $X = \mathbb{R}^2$,  and the set $P$ consists of two points
$p_1 = (0,0)$ and $p_2 = (2,0)$.
For each bounded open solid or compact solid set $A$ let $ \nu(A) = 0$ if $A \cap P = \emptyset$,   
$ \nu(A) = \lambda(A) $ if $A$ contains one point from $P$, and 
$ \nu(A) = 2 \lambda(X)$ if $A$ contains both points from $P$.
Then $\nu$ is a solid-set function (see \cite[Example 61]{Butler:TMLCconstr}), and  $\nu$ extends to a unique topological measure on $X$. 
Let $K_i$ be the closed ball of radius $1$ centered at $p_i$ for $i=1,2$. Then 
$K_1, K_2$ and $ C= K_1 \cup K_2$ are compact solid sets, $\nu(K_1) = \nu(K_2) = \pi, \,  \nu(C) = 4 \pi$. Since 
$\nu$ is not subadditive, it can not be a measure.  The quasi-linear functional corresponding to $ \nu$ is not linear. 
\end{example}

\begin{example} \label{nvssf}
Let  $X = \mathbb{R}^2$ or a square, $n$ be a natural number, and let $P$ be a set of distinct $2n+1$ points.
For each bounded open solid or compact solid set $A$ let $ \nu(A) = i/n$ if $ A$ contains  $2i$ or $2i+1$ points from $P$.
The set function $ \nu$  defined in this way is a solid-set function, and it extends to a unique topological measure on $X$ 
that assumes values $0, 1/n, \ldots, 1$. 
See  \cite[Example 2.1]{Aarnes:Pure},   \cite[Examples 4.14, 4.15]{QfunctionsEtm}, and \cite[Example 65]{Butler:TMLCconstr}.
The resulting topological measure is not a measure. For instance, when $X$ is the square and $n=3$, it is easy to represent $X = A_1 \cup A_2 \cup A_3$,
where each $A_i$  is a compact solid set containing one point from $P$. Then $\nu(A_i) =0$ for $i=1,2,3$, while $\nu(X) = 1$.  
Since $\nu$ is not subbadditive, it is not a measure, and the quasi-linear functional  $\rho$ corresponding to $ \nu$ is not linear. 
In \cite[Example 56]{Butler:QLFLC} we take $n=5$ and show that there are $f,g \ge 0$ such that $ \rho(f+g) \neq \rho(f) + \rho(g)$.
%An interesting property of quasi-linear functionals is demonstrated by $\rho$.  
If $X$ is locally compact, non-compact, for the functional $\rho$ we consider
a new functional $ \rho_g$ defined by $\rho_g(f) = \rho(gf)$, where $g \ge 0$. 
The new functional $\rho_g$ corresponds to a deficient topological measure 
obtained by integrating $g$ over closed and open sets with respect to a topological measure $\nu$. We can choose $ g \ge 0$ or $ g >0$ so that  
$\rho_g$ is no longer linear on singly generated subalgebras, but only linear on singly generated cones. 
See \cite[Example 32, Theorem 40]{Butler:Integration} for details.
\end{example}

\begin{example}
Let $X$ be locally compact, and let $D$ be a connected compact subset of $X$. Define a set function 
$\nu$ on $\mathscr{O}(X) \cup \mathscr{C}(X)$  by setting $\nu(A) = 1$ if $ D \subseteq A$ and $\nu(A) = 0$ otherwise, for any
$A \in \mathscr{O}(X) \cup \mathscr{C}(X)$. If $D$ has more than one element, then $\nu$ is a deficient topological measure, but not 
a topological measure. See \cite[Example 46]{Butler:DTMLC} and \cite[Example 1, p.729]{Svistula:DTM} for details.
\end{example}

For  more examples of topological measures and quasi-integrals on locally compact spaces
see \cite{Butler:TechniqLC} and the last sections of \cite{Butler:TMLCconstr} and \cite{Butler:QLFLC}.
For more examples of deficient topological measures see  \cite{Butler:DTMLC} and \cite{Svistula:DTM}. 

\section{Aleksandrov's Theorem for deficient topological measures}

\begin{definition} \label{defwk}
The weak topology on $ \mathbf{DTM}(X)$ is the coarsest (weakest) topology for which maps $ \mu \longmapsto \mathcal{R}_{\mu} (f), f \in C_0^+(X) $ are continuous.
\end{definition}

The basic neighborhoods for the weak topology have the form 
\begin{align} \label{wstRho}
N(\nu, f_1, \ldots, f_n, \epsilon) = \{ \mu \in \mathbf{DTM}(X): \ |\mathcal{R}_{\mu}(f_i) - \mathcal{R}_{\nu} (f_i) | &< \epsilon, \, f_i \in C_0^+(X), \\
&i=1, \ldots, n  \notag \}.
\end{align} 

Let $\mu_{\alpha}$ be a net in $\mathbf{DTM}(X)$, $\mu \in \mathbf{DTM}(X)$. 
The net $ \mu_{\alpha} $ converges weakly to $ \mu$ (and we write $ \mu_{\alpha} \Longrightarrow \mu$)  
iff  $\mathcal{R}_{\mu_{\alpha}}  (f) \rightarrow  \mathcal{R}_{\mu}  (f)$  for every $ f \in C_0^+(X)$, i.e. 
$\int f \, d \mu_{\alpha} \rightarrow \int f \, d \mu$  for every $ f \in C_0^+(X)$.

By  \cite[Theorem 8.7]{Butler:ReprDTM}, $\mathbf{DTM}(X)$ with weak convergence is homeomorphic to  $\mathbf{\Phi^+}(C_0^+(X))$ 
with pointwise convergence, 
and $\mathbf{TM}(X)$ is homeomorphic to the space of quasi-linear functionals with pointwise convergence.

\begin{remark} \label{wkwk*}
Our definition of weak convergence corresponds to one used in probability theory. It is the same as a functional analytical definition of  $wk*$ convergence
on $\mathbf{DTM}(X)$ (respectively, on $\mathbf{TM}(X)$), which is justified by the fact that this topology agrees with the weak$^*$ topology 
induced by p-conic quasi-linear functionals (respectively, quasi-linear functionals). 
In many papers the term "$wk^*$-topology" is used. 
\end{remark}

\begin{definition} \label{muContSet}
Let $\mu$ be a deficient topological measure.
A set $A$ is called a $\mu$-continuity set if  $ \mu (\overline A) = \mu(A^o)$.
\end{definition}

\begin{remark}
In probability theory, with $\mu$ a measure, a set $A$ is called a  $\mu$-continuity set if $\mu( \partial A) =0$. 
If $\mu$ is a measure (or $\mu$ is a topological measure and $\overline A$ is compact) this definition is equivalent to Definition \ref{muContSet}. 
If $ \mu$ is a deficient topological measure, then
by superadditivity $ \mu( \overline A )\ge  \mu(A^o) + \mu( \partial A)$, so for any  $\mu$-continuity set  $A$ we have $\mu( \partial A) =0$.
\end{remark}

We have the following generalizations of Aleksandrov's  well-known theorem for weak convergence of measures. 
(Aleksandrov's Theorem is often incorrectly called the "Portmanteau theorem",
a usage apparently deliberately started by Billingsley, who in \cite{BillingsleySv} cited a paper of the non-existent mathematician Jean-Pierre Portmanteau, 
"published" in a non-existent issue of the Annals of non-existent university;  see \cite[p.130]{Pfanzagl} and \cite[p.313]{Simon}.) 
This theorem gives equivalent definitions of weak convergence.

\begin{theorem} \label{AleksandrovLC} 
Let $X$ be locally compact,  and let $\mu, \mu_{\alpha}$  be deficient topological measures. The following are equivalent: 
\begin{enumerate}[label=(\arabic*),ref=(\arabic*)] 
\item \label{portm1}
$\int f \, d \mu_{\alpha} \rightarrow \int f \, d \mu$ ( i.e. $\mathcal{R}_{\mu_{\alpha}}  (f) \rightarrow  \mathcal{R}_{\mu}  (f)$ )  for every $ f \in C_0^+(X)$.
%\item \label{portm2}
%$ \mu_{\alpha} \xrightarrow{w^*} \mu$
\item \label{portm3}
$\liminf \mu_{\alpha} (U)  \ge \mu(U)$ for any $ U \in \mathscr{O}(X)$ and $ \limsup \mu_{\alpha} (K) \le \mu(K)$ for any $ K \in \mathscr{K}(X)$.
\item \label{portm4}
$ \mu_{\alpha} (A) \rightarrow \mu(A)$ for any compact or open bounded $\mu$-continuity set $A$.
\item \label{portm5} 
%Let  $F_{\mu, f}$  be the right distribution function $R_{2, \mu, f}$  or the left distribution function $L_{2, \mu, f}$ as in Definition \ref{4fns}.
If $ f \in C_0^+(X)$  then $R_{2, \mu_{\alpha}, f} (t)  \rightarrow R_{2, \mu, f} (t) $  and $R_{1, \mu_{\alpha}, f} (t)  \rightarrow R_{1, \mu, f} (t) $ 
for each point $t$ at which $R_{2, \mu, f}$ is continuous.  
\end{enumerate}
\end{theorem}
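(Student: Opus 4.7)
I would prove the four equivalent conditions by cycling (\ref{portm1}) $\Rightarrow$ (\ref{portm3}) $\Rightarrow$ (\ref{portm4}) $\Rightarrow$ (\ref{portm5}) $\Rightarrow$ (\ref{portm1}). The central tools are the representation formulas from Remark \ref{RemBRT}(III),
\[
\mu(U) = \sup\{\mathcal{R}_\mu(f) : f \in C_c(X),\ 0 \le f \le 1,\ \mathrm{supp}(f) \subseteq U\},
\]
\[
\mu(K) = \inf\{\mathcal{R}_\mu(g) : g \in C_c(X),\ 1_K \le g \le 1\},
\]
the layer-cake identity $\mathcal{R}_\mu(f) = \int_0^b R_{1,\mu,f}(t)\,dt$ for $b \ge \|f\|_\infty$, and the fact that deficient topological measures are monotone between open and compact sets (from combined inner and outer regularity).

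For (\ref{portm1}) $\Rightarrow$ (\ref{portm3}), given open $U$ and $\epsilon > 0$, I choose $f \in C_c(X)$ with $0 \le f \le 1$, $\mathrm{supp}(f) \subseteq U$, and $\mathcal{R}_\mu(f) > \mu(U) - \epsilon$; the sup-formula applied to $\mu_\alpha$ gives $\mathcal{R}_{\mu_\alpha}(f) \le \mu_\alpha(U)$, while (\ref{portm1}) yields $\mathcal{R}_{\mu_\alpha}(f) \to \mathcal{R}_\mu(f)$, so $\liminf \mu_\alpha(U) \ge \mu(U) - \epsilon$. The compact case uses the inf-formula dually. Step (\ref{portm3}) $\Rightarrow$ (\ref{portm4}) is a squeeze: for compact $\mu$-continuity $A$,
\[
\mu(A) = \mu(A^o) \le \liminf \mu_\alpha(A^o) \le \liminf \mu_\alpha(A) \le \limsup \mu_\alpha(A) \le \mu(A),
\]
and the bounded-open case is symmetric via $\overline A$ compact. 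Step (\ref{portm4}) $\Rightarrow$ (\ref{portm5}) uses that continuity of $R_{2,\mu,f}$ at $t > 0$ is equivalent to $R_{1,\mu,f}(t) = R_{2,\mu,f}(t)$; combined with $\{f > t\} \subseteq \overline{\{f > t\}} \subseteq \{f \ge t\}$ and monotonicity, this equality makes both the bounded open set $\{f > t\}$ and the compact set $\{f \ge t\}$ into $\mu$-continuity sets, so (\ref{portm4}) delivers the required convergences.

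The remaining implication (\ref{portm5}) $\Rightarrow$ (\ref{portm1}) is the most delicate. Pointwise convergence $R_{1,\mu_\alpha,f}(t) \to R_{1,\mu,f}(t)$ at all but countably many $t$ follows from (\ref{portm5}) and monotonicity of $R_{2,\mu,f}$. For a continuity point $t_0 > 0$, (\ref{portm5}) gives $R_{2,\mu_\alpha,f}(t_0)$ eventually bounded by some $M$, which dominates $R_{1,\mu_\alpha,f}(t)$ on $[t_0, b]$ and yields bounded convergence on that slab. The main obstacle is the slab $[0, t_0]$: I would use the p-conic cone additivity (property (p3)) to decompose $\mathcal{R}(f) = \mathcal{R}(\max(f - t_0, 0)) + \mathcal{R}(\min(f, t_0))$, where both summands lie in $A^+(f)$ and the first belongs to $C_c^+(X)$, so is handled as above. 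The chief difficulty is a uniform bound $\mathcal{R}_{\mu_\alpha}(\min(f, t_0)) \le C t_0$ as $t_0 \to 0$ along continuity points; I expect this to follow from the Lipschitz-type estimate of Remark \ref{LipQLF} after inserting a compactly supported cutoff majorizing $\min(f, t_0)$, with mass control provided by the eventual bound on $\mu_\alpha(\{f \ge t_0\})$ coming from (\ref{portm5}).
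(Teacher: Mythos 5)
Your cycle \ref{portm1} $\Rightarrow$ \ref{portm3} $\Rightarrow$ \ref{portm4} $\Rightarrow$ \ref{portm5} $\Rightarrow$ \ref{portm1} is the same as the paper's, and the first three links are essentially the paper's argument: the choice of $f,g\in C_c(X)$ via the sup/inf formulas of part \ref{mrDTM} of Remark \ref{RemBRT}, the squeeze through $A^o\subseteq A\subseteq\overline A$, and the observation that continuity of $R_{2,\mu,f}$ at $t>0$ makes $\{f>t\}$ and $\{f\ge t\}$ into $\mu$-continuity sets. The problem is the last link.

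For \ref{portm5} $\Rightarrow$ \ref{portm1} you rightly sense that the paper's one-line appeal to ``countably many discontinuities plus formula (\ref{rfform})'' hides a domination issue near $t=0$, but your repair does not close it. Since $\mathcal{R}_{\mu_\alpha}(\min(f,t_0))=\int_0^{t_0}\mu_\alpha(\{f>s\})\,ds$, a bound of the form $C t_0$ requires $\mu_\alpha(\{f>s\})\le C$ for $s$ arbitrarily close to $0$, i.e.\ control of $\mu_\alpha(\{f>0\})$; the eventual bound on $\mu_\alpha(\{f\ge t_0\})$ supplied by \ref{portm5} says nothing about the mass $\mu_\alpha$ carries on $\{0<f<t_0\}$. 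The cutoff idea fails for the same reason: when $f\in C_0^+(X)\setminus C_c(X)$ the function $\min(f,t_0)$ has non-compact support, so no $g\in C_c(X)$ majorizes it, and Remark \ref{LipQLF} (which needs both functions supported in a single compact set $K$, with the bound proportional to $\mu_\alpha(K)$) is unavailable. The gap is genuine: if \ref{portm5} is used only at $t>0$ the implication is false. Take $X=\mathbb{R}$, $\mu=\delta_0$, $\mu_n=\delta_0+n\delta_{a_n}$ with $a_n=\sqrt{\ln n}$; for every $f\in C_0^+(X)$ and every $t>0$ one has $R_{2,\mu_n,f}(t)=R_{2,\mu,f}(t)$ eventually, yet for $f(x)=e^{-x^2}$, $\int f\,d\mu_n=1+nf(a_n)=2\not\to 1$. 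What rescues the implication is the part of \ref{portm5} you are not using: for $t<0$ the function $R_{2,\mu,f}\equiv\mu(X)$ is continuous, so \ref{portm5} forces $\mu_\alpha(X)=R_{2,\mu_\alpha,f}(t)\to\mu(X)$. That yields an eventual uniform bound $R_{2,\mu_\alpha,f}\le\mu(X)+1$ on all of $[0,b]$, after which a.e.\ pointwise convergence plus dominated convergence finishes the proof with no decomposition at all. (Be aware that the same example satisfies \ref{portm3} and \ref{portm4} while violating \ref{portm1}, so the step \ref{portm4} $\Rightarrow$ \ref{portm5} --- in your write-up and in the paper's --- cannot deliver the $t\le 0$ case either; for non-compact $X$ the equivalence needs convergence of total masses as an additional hypothesis in conditions \ref{portm3} and \ref{portm4}. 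That is a defect of the statement, not something your argument introduces, but your proof of the final link must still draw the mass convergence from somewhere.)
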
  

\begin{proof}
%Equivalence \ref{portm1} $ \Longleftrightarrow $  \ref{portm2} is the the definition of weak* convergence. \\
\ref{portm1} $ \Rightarrow $  \ref{portm3}.  
Let $ U \in \mathscr{O}(X), \ K \in \mathscr{K}(X), \epsilon >0$.
By part \ref{mrDTM} of Remark \ref{RemBRT}
choose $f, g \in C_c(X)$ such that $supp f \subseteq U, \ K \subseteq supp\, g$ and
$ \mathcal{R}_{\mu} (f) > \nu(U) - \epsilon, \ \ \mathcal{R}_{\mu} (g) < \nu(K) +  \epsilon.$
Choose $ \alpha_0$ such that $ | \mathcal{R}_{\mu_{\alpha}} (f) - \mathcal{R}_\mu (f) |  < \epsilon $ and  
$ | \mathcal{R}_{\mu_{\alpha}} (g) - \mathcal{R}_\mu (g) |  < \epsilon $  for all $ \alpha > \alpha_0$.
Then 
\begin{align*} 
%\label{muale}
\mu_{\alpha} (U) \ge \mathcal{R}_{\mu_{\alpha}} (f) > \mathcal{R}_{\mu} (f) - \epsilon >  \mu(U) -  2\epsilon , \\
\mu_{\alpha} (K)  \le \mathcal{R}_{\mu_{\alpha}} (g) <   \mathcal{R}_{\mu}(g) + \epsilon < \mu(K) +  2 \epsilon,
\end{align*}
and it is easy to see that $\liminf \mu_{\alpha} (U)  \ge \mu(U)$ and $\limsup  \mu_{\alpha} (K) \le \mu(K)$.  \\
\ref{portm3} $ \Rightarrow $  \ref{portm4}.   We have:
$\mu(A^o)  \le \liminf  \mu_{\alpha} (A^o)  \le  \liminf  \mu_{\alpha} (A) \le   \limsup  \mu_{\alpha} (A) \le  \limsup  \mu_{\alpha} (\overline A) \le \mu(\overline A). $
If $A$ is an $\mu$-continuity set (whether $A$ is compact or open bounded), we then see that $\lim  \mu_{\alpha} (A) = \mu(A)$. \\
\ref{portm4} $ \Rightarrow $ \ref{portm5}.  
If $ t$ is a point of continuity of $ R_{2, \mu, f}$ then from \cite[Lemma 6.3 (III)]{Butler:ReprDTM} it follows that 
the sets $ f^{-1}((t, \infty))$ and $ f^{-1}([t, \infty))$ are $\mu$-continuity sets. The statement follows from \ref{portm4}.  \\
\ref{portm5} $ \Rightarrow $ \ref{portm1}. 
By  \cite[Lemma 6.3]{Butler:ReprDTM} $R_{2, \mu, f}$ has at most countably many points of discontinuity; the statement follows from formulas 
(\ref{rfform}) and \ref{portm5}.
\end{proof}  

If $\mu, \mu_{\alpha}$   are finite topological measures on a compact space $X$, and $ \lim \mu_{\alpha} (X) = \mu(X)$,  
then from part \ref{TM1} of Definition \ref{TMLC} it follows that 
$\liminf \mu_{\alpha} (U) \ge \mu(U)$ for any $ U \in \mathscr{O}(X)$  iff $ \limsup \mu_{\alpha} (D) \le \mu(D)$ for any $ D \in \mathscr{C}(X)$.
Therefore, we have the following version of Aleksandrov's  Theorem:
 
\begin{theorem} \label{AleksandrovLCtm} 
Let $X$ be compact,  and let $\mu, \mu_{\alpha}$  be  finite topological measures. TFAE: 
\begin{enumerate}[label=(\arabic*),ref=(\arabic*)] 
\item \label{portm1tm}
$\int f \, d \mu_{\alpha} \rightarrow \int f \, d\mu$ ( i.e. $\mathcal{R}_{\mu_{\alpha}}  (f) \rightarrow  \mathcal{R}_{\mu}  (f)$ )  for every $ f \in C(X)$.
%\item \label{portm2tm}
%$ \mu_{\alpha} \xrightarrow{w^*} \mu$
\item \label{portm3tm}
$\liminf \mu_{\alpha} \ge \mu(U)$ for any $ U \in \mathscr{O}(X)$ and $ \lim \mu_{\alpha} (X) = \mu(X)$. 
\item \label{portm3atm}
$ \limsup \mu_{\alpha} (D) \le \mu(D)$ for any $ D \in \mathscr{C}(X)$ and $ \lim \mu_{\alpha} (X) = \mu(X)$.
\item \label{portm4tm}
$ \mu_{\alpha} (A) \rightarrow \mu(A)$ for  any $\mu$-continuity set $A$.
\item \label{portm5tm} 
If $ f \in C_0^+(X)$  then $R_{2, \mu_{\alpha}, f} (t)  \rightarrow R_{2, \mu, f} (t) $ and $R_{1, \mu_{\alpha}, f} (t)  \rightarrow R_{1, \mu, f} (t) $  
for each point $t$ at which $R_{2, \mu, f}$ is continuous.
\end{enumerate}
\end{theorem}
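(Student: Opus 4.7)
The plan is to reduce everything to Theorem \ref{AleksandrovLC} by exploiting the finite additivity \ref{TM1}, which is the crucial ingredient available for topological measures but not for general deficient topological measures, and which in the compact setting converts open-set statements into closed-set statements via complements.

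First I would verify \ref{portm1tm} $\Rightarrow$ \ref{portm3tm}. The condition $\lim \mu_\alpha(X) = \mu(X)$ is obtained by applying \ref{portm1tm} to the constant function $f \equiv 1 \in C(X) = C_0(X)$, using $\mathcal{R}_\mu(1) = \mu(X)$ from formula (\ref{rfform}) in Remark \ref{RemBRT}. The open-set inequality $\liminf \mu_\alpha(U) \ge \mu(U)$ follows from the same approximation argument as in the implication \ref{portm1} $\Rightarrow$ \ref{portm3} of Theorem \ref{AleksandrovLC}: pick $f \in C_c(X)$ supported in $U$ with $\mathcal{R}_\mu(f) > \mu(U) - \epsilon$ and use monotonicity together with \ref{portm1tm}.

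Next, the equivalence \ref{portm3tm} $\Leftrightarrow$ \ref{portm3atm} is the content of the remark preceding the theorem: for any closed $D \subseteq X$, compactness makes $D$ compact and $X \setminus D$ open, so \ref{TM1} applied to $X = D \sqcup (X \setminus D)$ yields $\mu(D) = \mu(X) - \mu(X \setminus D)$ and the same identity for each $\mu_\alpha$. Since $\mu_\alpha(X) \to \mu(X)$ is a finite limit, the rearrangement $\mu_\alpha(D) = \mu_\alpha(X) - \mu_\alpha(X \setminus D)$ exchanges $\liminf$ bounds for open sets with $\limsup$ bounds for their complementary closed sets. For \ref{portm3tm}/\ref{portm3atm} $\Rightarrow$ \ref{portm1tm}, I would combine both halves: \ref{portm3tm} gives $\liminf \mu_\alpha(U) \ge \mu(U)$ for open $U$, and \ref{portm3atm} (which by the above is equivalent) gives $\limsup \mu_\alpha(K) \le \mu(K)$ for every closed $K$, hence for every compact $K$ since every closed subset of compact $X$ is compact. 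These are precisely the hypotheses of \ref{AleksandrovLC}\ref{portm3}, so \ref{portm1tm} follows from Theorem \ref{AleksandrovLC}.

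Finally, \ref{portm4tm} and \ref{portm5tm} inherit directly from the corresponding parts of Theorem \ref{AleksandrovLC}: on compact $X$ every set is bounded, so the qualifier ``bounded'' in \ref{AleksandrovLC}\ref{portm4} is automatic, and the chain \ref{portm3tm} $\Rightarrow$ \ref{portm4tm} $\Rightarrow$ \ref{portm5tm} $\Rightarrow$ \ref{portm1tm} runs verbatim. The only delicate point, and the main obstacle such as it is, is the careful invocation of \ref{TM1}, which requires $D$, $X \setminus D$, and their disjoint union to all lie in $\mathscr{K}(X) \cup \mathscr{O}(X)$; this succeeds precisely because $X$ itself is both compact and open, which is exactly why the hypothesis $\lim \mu_\alpha(X) = \mu(X)$ cannot be dropped and why the symmetry between open and closed sets is unavailable in the general locally compact statement of Theorem \ref{AleksandrovLC}.
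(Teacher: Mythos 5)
Your proposal is correct and takes essentially the same route as the paper: the paper derives this theorem from Theorem \ref{AleksandrovLC} precisely via the complementation identity $\mu(X)=\mu(D)+\mu(X\setminus D)$ supplied by \ref{TM1} (stated in the remark immediately preceding the theorem), which is what you use to pass between the open-set and closed-set conditions. The one detail you leave implicit is upgrading convergence of $\mathcal{R}_{\mu_\alpha}(f)$ from $f\in C_0^+(X)$ (all that Theorem \ref{AleksandrovLC}\ref{portm1} delivers) to all of $C(X)$; this is routine via $f=(f+\|f\|)-\|f\|$, the identity $\mathcal{R}(f+c)=\mathcal{R}(f)+c\,\mu(X)$ from (\ref{rfform}), and $\mu_\alpha(X)\to\mu(X)$.
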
 

\begin{theorem} \label{wkBase}
The weak topology on $\mathbf{DTM}(X)$ is given by basic neighborhoods of the form 
\begin{align*}   
%\label{WkNbd}
 W( &\nu, U_1, \ldots, U_n, C_1, \ldots, C_m, \epsilon) = \{ \mu \in DTM: \ \mu(U_i) > \nu(U_i) - \epsilon, \ \mu(C_j) < \nu(C_j) + \epsilon,  \\
& i=1, \ldots, n, \ j=1, \ldots m \} 
\end{align*} 
where $\nu \in \mathbf{DTM}(X),  U_i \in \mathscr{O}(X), C_j \in \mathscr{K}(X), \epsilon >0, n, m \in \mathbb{N}$.
\end{theorem}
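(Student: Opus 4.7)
The plan is to show that the two bases---the $N$-neighborhoods from \eqref{wstRho} and the $W$-neighborhoods in the statement---generate the same topology on $\mathbf{DTM}(X)$, by verifying that around any fixed $\nu$, each basic set of one family contains a basic set of the other.

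For the direction \emph{every $W$-neighborhood contains an $N$-neighborhood}, I fix $W(\nu, U_1, \ldots, U_n, C_1, \ldots, C_m, \epsilon)$ and invoke the formulas for $\mu_\rho$ on open and compact sets from part \ref{mrDTM} of Remark~\ref{RemBRT}. For each $i$ I choose $f_i \in C_c(X)$ with $0 \le f_i \le 1$, $\mathrm{supp}\, f_i \subseteq U_i$, and $\mathcal{R}_\nu(f_i) > \nu(U_i) - \epsilon/2$; for each $j$ I choose $g_j \in C_c(X)$ with $1_{C_j} \le g_j \le 1$ and $\mathcal{R}_\nu(g_j) < \nu(C_j) + \epsilon/2$. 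Whenever $\mu$ lies in $N(\nu; f_1, \ldots, f_n, g_1, \ldots, g_m; \epsilon/2)$, the same formulas applied now to $\mu$ give $\mu(U_i) \ge \mathcal{R}_\mu(f_i) > \nu(U_i) - \epsilon$ and $\mu(C_j) \le \mathcal{R}_\mu(g_j) < \nu(C_j) + \epsilon$, so $\mu \in W$.

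For the reverse direction \emph{every $N$-neighborhood contains a $W$-neighborhood}, it suffices to treat a single $f \in C_0^+(X)$. Since $R_{1,\nu,f}$ is nonincreasing, its discontinuity set is countable, so I can pick a partition $0 = s_0 < s_1 < \cdots < s_N$ with $s_N > \|f\|$, mesh bounded by $\eta$, and $s_1, \ldots, s_{N-1}$ all continuity points of $R_{1,\nu,f}$. Put $U_k = f^{-1}((s_k,\infty))$ and $C_k = f^{-1}([s_k,\infty))$ for $k = 1, \ldots, N-1$ (each $C_k$ compact since $f$ vanishes at infinity and $s_k>0$), and $C_0 = \mathrm{supp}\, f$. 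At each chosen node $\nu(U_k) = \nu(C_k)$. Using $R_{1,\mu,f}(s_k) = \mu(U_k)$, $R_{2,\mu,f}(s_k) = \mu(C_k)$, and the monotonicity of $R_{1,\mu,f}$, I obtain the uniform sandwich
\[
\sum_{k=1}^{N-1}(s_k - s_{k-1})\,\mu(U_k) \;\le\; \mathcal{R}_\mu(f) \;\le\; s_1\,\mu(C_0) + \sum_{k=1}^{N-1}(s_{k+1} - s_k)\,\mu(C_k)
\]
for every $\mu \in \mathbf{DTM}(X)$. Applied to $\nu$ with $\nu(U_k) = \nu(C_k)$, an Abel-summation calculation (which exploits that $\nu(C_k)$ is nonincreasing in $k$) shows that the two outer expressions differ by $O(\eta\, \nu(X))$, so each sandwiches $\mathcal{R}_\nu(f)$ within that amount. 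For $\mu$ in the $W$-neighborhood built from the $U_k$'s, $C_k$'s, $C_0$, and tolerance $\delta$, the same sandwich gives $|\mathcal{R}_\mu(f) - \mathcal{R}_\nu(f)| \le O(\eta\,\nu(X)) + O(\delta\, s_N)$; choosing $\eta$ and then $\delta$ small enough makes this less than $\epsilon$. Intersecting the $W$-neighborhoods produced from each $f_1, \ldots, f_n$ (combining lists of open and compact sets and taking the minimum $\delta$) yields a single $W$-neighborhood inside $N(\nu; f_1, \ldots, f_n; \epsilon)$.

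The main technical hurdle is this second direction: one must estimate the nonlinear functional $\mathcal{R}_\mu(f)$ from both sides using only finitely many open and compact sets chosen in advance from $\nu$ and $f$. The decisive trick is to place the partition nodes at continuity points of $R_{1,\nu,f}$, so that the same value $\nu(f^{-1}((s_k,\infty))) = \nu(f^{-1}([s_k,\infty)))$ simultaneously governs the lower bound (built from open-set values of $\mu$) and the upper bound (built from compact-set values), making the sandwich tight up to the $W$-tolerance.
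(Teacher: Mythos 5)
Your first half (every $W$-neighborhood contains an $N$-neighborhood of the form (\ref{wstRho})) is exactly the paper's argument, via the formulas in part \ref{mrDTM} of Remark~\ref{RemBRT}. Your second half is a genuinely different route: the paper never exhibits a $W$-neighborhood inside a given $N$-neighborhood, but instead observes that $\tau_W$-convergence of a net forces $\liminf\mu_\alpha(U)\ge\mu(U)$ and $\limsup\mu_\alpha(K)\le\mu(K)$, invokes Theorem~\ref{AleksandrovLC} to conclude $\mathcal{R}_{\mu_\alpha}(f)\to\mathcal{R}_\mu(f)$, and then uses that $\tau_N$ is the coarsest topology making these maps continuous. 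Your direct Riemann--Darboux sandwich at continuity points of $R_{1,\nu,f}$ is more quantitative and, when $X$ is compact, it is complete and self-contained (there one may even take $C_0=X$).

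For non-compact $X$, however, there is a genuine gap at $C_0=\mathrm{supp}\,f$: a function in $C_0^+(X)$ need not have compact support (take $f(x)=1/(1+x^2)$ on $\mathbb{R}$), so $C_0$ is not an admissible set in a $W$-neighborhood, and the term $s_1\,\mu(C_0)$ bounding $\int_0^{s_1}R_{2,\mu,f}(t)\,dt$ is not controlled. This is not repairable by shrinking $s_1$ or substituting $f^{-1}([s,\infty))$ for small $s>0$: a $W$-neighborhood imposes only upper bounds on finitely many compact sets and lower bounds on finitely many open sets, so it always contains $\mu=n\,\delta_x$ with $x$ outside all the listed compacta, and if $f(x)>0$ then $\mathcal{R}_\mu(f)=nf(x)$ is unbounded over the neighborhood; in particular, for $\nu=0$ and $f$ strictly positive no $W$-neighborhood of $\nu$ lies in $N(\nu,f,1)$. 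The obstruction you hit --- the contribution of $\{0<f<s_1\}$, where $f$ is small but positive on an unbounded set --- is therefore essential and not a defect of your method alone: the paper's route conceals the same difficulty inside the implication \ref{portm3}$\Rightarrow$\ref{portm1} of Theorem~\ref{AleksandrovLC}, which the example $\mu_n=n\,\delta_{\sqrt n}\Rightarrow 0$ on $\mathbb{R}$ also tests. The equality of the two topologies is safe on compact $X$, or on subsets of $\mathbf{DTM}(X)$ that are uniformly bounded in variation, and your argument adapts cleanly to those settings.
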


\begin{proof}
The weak topology is the topology $\tau_N$ given by basic neighborhoods of the form (\ref{wstRho}). 
It is easy to see that the sets $ W(\nu, U_1, \ldots, U_n, C_1, \ldots, C_m, \epsilon)$ are basic neighborhoods for some topology $\tau_W$ on  $\mathbf{DTM}(X)$.
Consider a basic neighborhood $ W(\nu, U, C, \epsilon)$.
Given $\epsilon >0$, by part \ref{mrDTM} of Remark \ref{RemBRT}
choose $f, g \in C_c(X)$ such that  $ supp f \subseteq U, g \ge 1_K$ and 
$$ \mathcal{R}_{\nu} (f) > \nu(U) - \frac{\epsilon}{2}, \ \ \mathcal{R}_{\nu} (g) < \nu(C) +  \frac{\epsilon}{2}.$$ 
Let $ \mu \in N(\nu, f,g, \epsilon/2)$  as in (\ref{wstRho}).  We have:
$$ \mu(U)  > \mathcal{R}_{\mu} (f) > \mathcal{R}_{\nu} (f) - \frac{\epsilon}{2} >  \nu(U) - \epsilon, $$
$$  \mu(C) \le \mathcal{R}_{\mu}(g) <  \mathcal{R}_{\nu}(g) + \frac{\epsilon}{2}  <  \nu(C) +  \epsilon.$$  
Therefore, $  N(\nu, f, g, \epsilon/2) \subseteq W(\nu, U, C, \epsilon)$.
We see that $\tau_W \subseteq \tau_N$, i.e. 
$\tau_W$ is a coarser topology than $\tau_N$.
If $ \mu_{\alpha} \rightarrow \mu$ in the topology $\tau_W$ then it is easy to see that $\liminf  \mu_{\alpha}(U) \ge \mu(U)$ for any open set $U$, and that
$\limsup \mu_{\alpha}(K) \ge \mu(K)$ for any compact set $K$. By Theorem \ref{AleksandrovLC} $\int f \, d \mu_{\alpha} \rightarrow \int f \, d\mu$ for every 
$f \in C_0^+(X)$. The weak topology $\tau_N$ is the coarsest topology with this property, thus, $ \tau_N = \tau_W$.
\end{proof}
 
\begin{theorem}  \label{basicTP}
The space $\mathbf{DTM}(X)$ is Hausdorff and locally convex. Every set of the form 
$\{ \mu \in \mathbf{DTM}(X): \mu(X) \le c\} = \{ \mathcal{R}: \| \mathcal{R} \|  \le c\} , c>0$ is compact. 
If $X$ is compact then   $\mathbf{DTM}(X)$ is locally compact.
\end{theorem}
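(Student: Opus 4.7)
The plan is to pull everything back along the order-preserving bijection between $\mathbf{DTM}(X)$ with the weak topology and $\mathbf{\Phi^+}(C_0^+(X))$ with the topology of pointwise convergence (Remark \ref{RemBRT}), using Theorem \ref{wkBase} for an alternative neighborhood base when convenient. Hausdorffness is immediate: if $\mu \ne \nu$ in $\mathbf{DTM}(X)$, the bijection yields some $f \in C_0^+(X)$ with $\mathcal{R}_\mu(f) \ne \mathcal{R}_\nu(f)$, and then $N(\mu, f, \epsilon)$ and $N(\nu, f, \epsilon)$ with $\epsilon < |\mathcal{R}_\mu(f) - \mathcal{R}_\nu(f)|/2$ separate them.

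For local convexity I first note that $\mathbf{DTM}(X)$ is a convex cone: conditions \ref{DTM1}--\ref{DTM3} each survive under nonnegative linear combinations. The key observation is that $\mu \mapsto \mathcal{R}_\mu(f)$ is affine for fixed $f \in C_0^+(X)$: since $R_{1, \lambda\mu_1 + (1-\lambda)\mu_2, f}(t) = \lambda R_{1, \mu_1, f}(t) + (1-\lambda) R_{1, \mu_2, f}(t)$ for all $t$, the integral formula (\ref{rfform}) gives $\mathcal{R}_{\lambda\mu_1 + (1-\lambda)\mu_2}(f) = \lambda \mathcal{R}_{\mu_1}(f) + (1-\lambda)\mathcal{R}_{\mu_2}(f)$ for $\lambda \in [0,1]$. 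Consequently each basic neighborhood (\ref{wstRho}) is a finite intersection of convex slabs $\{\mu : |\mathcal{R}_\mu(f_i) - \mathcal{R}_\nu(f_i)| < \epsilon\}$ and is itself convex.

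For compactness of $B_c := \{\mu \in \mathbf{DTM}(X) : \mu(X) \le c\}$, I transport via the bijection to $\{\rho \in \mathbf{\Phi^+}(C_0^+(X)) : \|\rho\| \le c\}$. By the bound $0 \le \rho(f) \le c\|f\|$ from Remark \ref{RemBRT}(II), this set embeds homeomorphically into the Tychonoff-compact product $P = \prod_{f \in C_0^+(X)} [0, c\|f\|]$, so it suffices to show the image is closed in $P$. If $\rho = \lim_\alpha \mathcal{R}_{\mu_\alpha}$ pointwise with $\mu_\alpha \in B_c$, then each defining property of a p-conic quasi-linear functional (Definition \ref{cqlf})---orthogonal additivity, monotonicity, and linearity on each cone $A^+(f)$---passes to the pointwise limit, and $\|\rho\| \le c$ is preserved; so $\rho \in \mathbf{\Phi^+}(C_0^+(X))$ and corresponds via Remark \ref{RemBRT}(III) to some $\mu \in B_c$. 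This verification in the limit is the step that will require most care, though each identity is routine.

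Finally, if $X$ is compact then $X \in \mathscr{K}(X)$, so Theorem \ref{wkBase} provides the open neighborhood $W(\mu, \emptyset, X, 1) = \{\nu : \nu(X) < \mu(X) + 1\}$ of any $\mu \in \mathbf{DTM}(X)$. Its closure, taken in the already established Hausdorff space $\mathbf{DTM}(X)$, lies inside the compact set $B_{\mu(X)+1}$ and is therefore itself compact, yielding the required compact neighborhood of $\mu$ and establishing local compactness.
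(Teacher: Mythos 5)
Your proposal is correct and follows essentially the same route as the paper: Hausdorffness via separating functionals, local convexity from the affineness of $\mu \mapsto \mathcal{R}_\mu(f)$ on basic neighborhoods, compactness by embedding into a Tychonoff product and checking that the pointwise limit is again a p-conic quasi-linear functional corresponding to some $\mu \in B_c$, and local compactness from the open set $\{\nu : \nu(X) < \mu(X)+1\}$ sitting inside the compact ball. The only cosmetic differences are that the paper also gives a direct separation argument using a compact set on which $\mu$ and $\nu$ differ, and phrases convexity on the functional side rather than the measure side.
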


\begin{proof}
First we shall show that  $\mathbf{DTM}(X)$ is Hausdorff. 
Suppose $ \mu \neq \nu$, then there is $ K \in \mathscr{K}(X)$ such that $ \nu(K) \neq \mu(K)$. Let $ | \mu(K) - \nu(K) | = 5 \epsilon >0$.
By part \ref{mrDTM} of Remark \ref{RemBRT}
find $g, h \in C_c(X)$ such that $ \mathcal{R}_{\mu}(g) - \mu(K) < \epsilon, \, \mathcal{R}_{\nu}(h) - \nu(K) < \epsilon$. Let $f = g \wedge h$, so 
 $ \mathcal{R}_{\mu}(f) - \mu(K) < \epsilon, \, \mathcal{R}_{\nu}(f) - \nu(K) < \epsilon$.
Then $N(\mu, f, \epsilon)$ and $N(\nu, f , \epsilon)$ as in formula (\ref{wstRho}) are disjoint 
neighborhoods of $ \mu$ and $ \nu$: otherwise, if $ \lambda \in N(\mu, f, \epsilon) \cap N(\nu, f, \epsilon) $ then 
$ | \mu(K) - \nu(K) | \le | \mu(K) - \mathcal{R}_{\mu}(f)| + | \mathcal{R}_{\mu}(f) - \mathcal{R}_{\lambda} (f) | 
+ |  \mathcal{R}_{\lambda}(f) - \mathcal{R}_{\nu} (f) | + | \mathcal{R}_{\nu} (f) - \nu(K)| < 4\epsilon  
<  | \mu(K) - \nu(K) | $, which is a  contradiction.

One can also see that  $\mathbf{DTM}(X)$ is Hausdorff because a homeomorphic space $\mathbf{\Phi^+}(C_0^+(X))$ is  Hausdorff.
The basic open set in $\mathbf{\Phi^+}(C_0^+(X))$  is of the form $W = \{ \mathcal{R}: \mathcal{R}(f_i) \in O_i, \, O_ i 
\mbox{   are open in   } \mathbb{R},  \, f_i \in C_0^+(X), \, i=1, \ldots, n,  \}$.  
If $ \mathcal{R} $ and $\rho$ are in $W$, then their convex combination is also in $W$. Thus, $\mathbf{DTM}(X)$ is locally convex. 

Let $  c>0$ and $ P= \{ \mu \in \mathbf{DTM}(X): \mu(X) \le c\}$. 
Consider the product space
\begin{eqnarray*}
Y = \prod_{f \in C_0^+(X)} [ -c \| f \| , c \| f \| \ ]  
\end{eqnarray*} 
and the function
$ T : P \longrightarrow Y$   defined by   $( T(\mu) )_f = \rho_{\mu} (f) = \int f \, d\mu $.
The function $T$ is continuous , since each of the maps
$ \mu \longmapsto \rho_{\mu} (f) $ is continuous.
$ T$ is $ 1-1 $ which follows from Remark \ref{RemBRT}.
Also $ T : P \longrightarrow T(P) $ is a 
homeomorphism, because 
$ T(\mu_{\gamma}) \longrightarrow T(\mu_0) $
implies $ \mu_{\gamma} \longrightarrow \mu_0 $.
To show that $P$ is compact it is enough to show that
$ T(P) $ is closed in $Y$.
Let $ T(\mu_{\alpha}) \longrightarrow L $ in $ Y $.
Define $ \rho(f) = L_f, \,  f \in C_0^+(X) $.
Then $\rho$ is a p-conic quasi-linear functional, and by Remark \ref{RemBRT} there exists a finite deficient topological measure 
$ \mu_0 $ such that
$ \rho = \rho_{\mu_0} $.
Then $ L_f= \rho_f=(\rho_{\mu_0})_f = (T(\mu_0))_f $,
i.e. $ L=T(\mu_0) $.

If $X$ is compact, then for $\nu \in \mathbf{DTM}(X)$  and $W = \{ \mu : \mu(X) < \nu(X) + \epsilon\}$ we have: 
$ \nu \in W \subseteq \{ \mu: \mu(X) \le \nu(X) + \epsilon \}$, and the last set is compact.
\end{proof}

\section{Prokhorov's Theorem for topological measures}

In this section we show that several classical results of probability theory hold for deficient topological measures or topological measures. 
 
\begin{lemma}
If each sequence $\{\mu_{n_i}\} $ of   $\{\mu_{n}\} $, where $\mu_n$ are deficient topological measures, contains a further subsequence
 $\{\mu_{n_{i_j}}\} $ such that $\mu_{n_{i_j}}$ converges weakly to a deficient topological measure $\mu$, then $\mu_n$ converges weakly to  $\mu$.
 \end{lemma}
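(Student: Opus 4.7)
The plan is to use the standard subsequence-subsequence principle for convergence in an arbitrary topological space, applied to the weak topology on $\mathbf{DTM}(X)$ defined in Definition~\ref{defwk}. The argument is purely topological: nothing about the specific structure of deficient topological measures is needed beyond the fact that weak convergence is convergence in a topology (so that "converging to $\mu$" means eventually entering every open neighborhood of $\mu$).

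First I would argue by contradiction. Suppose $\mu_n$ does not converge weakly to $\mu$. Then some basic open weak neighborhood $W$ of $\mu$ --- for concreteness, one of the form $N(\mu, f_1, \ldots, f_k, \epsilon)$ from (\ref{wstRho}), or equivalently (by Theorem~\ref{wkBase}) one of the form $W(\mu, U_1, \ldots, U_n, C_1, \ldots, C_m, \epsilon)$ --- fails to contain $\mu_n$ for infinitely many indices $n$. Enumerate those indices in increasing order as $n_1 < n_2 < \ldots$; this produces a subsequence $\{\mu_{n_i}\}$ with $\mu_{n_i} \notin W$ for every $i$.

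Now I invoke the hypothesis applied to this particular subsequence: there must be a further subsequence $\{\mu_{n_{i_j}}\}$ converging weakly to $\mu$. Since $W$ is an open neighborhood of $\mu$ in the weak topology, weak convergence forces $\mu_{n_{i_j}} \in W$ for all sufficiently large $j$. This directly contradicts the choice of the original subsequence, whose every term lies outside $W$. Hence $\mu_n \Longrightarrow \mu$.

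There is no real obstacle here; the only thing to be careful about is that we are working in a topology that may not be metrizable a priori, so I would phrase the argument entirely in terms of open neighborhoods rather than trying to use a metric or a distinguished sequence of test functions. The basic neighborhoods provided by (\ref{wstRho}) (or Theorem~\ref{wkBase}) are all that is needed, and the verification that a weakly convergent sequence eventually enters any given basic neighborhood of its limit is immediate from Definition~\ref{defwk}.
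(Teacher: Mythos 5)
Your proof is correct and follows essentially the same route as the paper: assume non-convergence, extract a subsequence staying outside a fixed basic weak neighborhood of $\mu$, and note that no further subsequence of it can converge weakly to $\mu$. The paper phrases the bad neighborhood via a single test function $f \in C_0^+(X)$ and an $\epsilon$, which is just a special case of your basic-neighborhood formulation.
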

 
\begin{proof}
If   $\mu_n$  does not converge weakly to  $\mu$, then there is $f \in C_0^+(X)$ such that $| \int f \, d \mu_{n_i} - \int f \, d\mu | \ge  \epsilon$ for some $ \epsilon>0$ and 
all $\mu_{n_i}$ in some subsequence. But then no subsequence of $ \{ \mu_{n_i} \}$ can converge weakly to $\mu$.
\end{proof}  

We clearly have 

\begin{lemma} \label{L6.1}
$X$ is homeomorphic to the (topological) subset $D = \{ \delta_x: \, x \in X\}$ of $ \mathbf{DTM}(X)$ (equipped with the weak topology).
\end{lemma}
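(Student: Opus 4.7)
The plan is to define the natural map $\Phi \colon X \to D$ by $\Phi(x) = \delta_x$, where $\delta_x$ is the point-mass at $x$ (which is a Radon measure, hence in $\mathbf{DTM}(X)$, so $D$ is a well-defined subset). The goal is then to verify that $\Phi$ is a bijection from $X$ onto $D$ and that both $\Phi$ and $\Phi^{-1}$ are continuous when $D$ carries the subspace topology inherited from the weak topology on $\mathbf{DTM}(X)$.

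Bijectivity is essentially free. Surjectivity holds by the definition of $D$, and injectivity follows from Hausdorffness of $X$: given $x \neq y$, pick disjoint open sets, and $\delta_x$ and $\delta_y$ differ on them. For continuity of $\Phi$, note that for each $f \in C_0^+(X)$ the quasi-integral satisfies $\mathcal{R}_{\delta_x}(f) = f(x)$ (one checks this either directly from the decreasing distribution function $R_1(t) = \mathbf{1}_{f(x) > t}$ using formula (\ref{rfform}), or by invoking the fact from Remark \ref{RemBRT} that the quasi-integral coincides with the Lebesgue integral for measures). Hence if $x_\alpha \to x$ in $X$, then $\mathcal{R}_{\delta_{x_\alpha}}(f) = f(x_\alpha) \to f(x) = \mathcal{R}_{\delta_x}(f)$ for every $f \in C_0^+(X)$, which by definition means $\delta_{x_\alpha} \Longrightarrow \delta_x$.

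The step that requires a little more care is continuity of $\Phi^{-1}$. Suppose $\delta_{x_\alpha} \Longrightarrow \delta_x$ in $\mathbf{DTM}(X)$ but $x_\alpha \not\to x$ in $X$. Then there is an open neighborhood $U$ of $x$ and a subnet (which I will relabel as the whole net) with $x_\alpha \notin U$ for all $\alpha$. By Lemma \ref{easyLeLC} applied to the compact set $\{x\} \subseteq U$, there is an open $V$ with $x \in V$ and $\overline V$ compact, $\overline V \subseteq U$. Urysohn's lemma in the locally compact Hausdorff setting then produces $f \in C_c(X) \subseteq C_0^+(X)$ with $0 \le f \le 1$, $f(x) = 1$, and $\mathrm{supp}\, f \subseteq V$. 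Since $x_\alpha \notin U \supseteq \mathrm{supp}\, f$, we have $f(x_\alpha) = 0$ for every $\alpha$, so $\mathcal{R}_{\delta_{x_\alpha}}(f) = 0 \not\to 1 = \mathcal{R}_{\delta_x}(f)$, contradicting weak convergence.

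The only mild obstacle is this last step, where one must leverage local compactness plus Hausdorffness to separate $x$ from the complement of $U$ by a continuous function of compact support. Everything else is a direct translation of the definition of weak convergence via the fact $\mathcal{R}_{\delta_x}(f) = f(x)$.
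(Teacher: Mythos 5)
Your proof is correct. The paper actually states this lemma without proof (it is prefaced only by ``We clearly have''), and your argument supplies exactly the verification one would expect: the identity $\mathcal{R}_{\delta_x}(f) = f(x)$ gives continuity of $x \mapsto \delta_x$ immediately, and the converse direction is handled correctly by combining Lemma \ref{easyLeLC} with Urysohn's lemma to produce a compactly supported bump function separating $x$ from the complement of $U$, which a subnet of $\delta_{x_\alpha}$ would have to respect under weak convergence. No gaps.
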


%\begin{proof}
%If $x_{\alpha} \rightarrow x$ then $f(x_{\alpha}) \rightarrow f(x)$, i.e. $ \int f \, d\delta_{x_{\alpha}} \rightarrow \int f \, d\delta_x$ for  any $ f  \in C_0^+(X)$.
%Hence, $\delta_{x_{\alpha}} \Longrightarrow  \delta_x$. Converlsely, let   $\delta_{x_{\alpha}}  \Longrightarrow \delta_x$. 
%If $x_{\alpha} $ does not converge to $x$, then there 
%an open set $U$  and a subnet $x_{\beta}$ such that $x \in U$ and $x_{\beta} \notin U$. Let $g$ be a Urysohn function such that $ g(x) = 1$ and $ supp g \se U$.
%Then $\int g \, d \delta_{x_{\beta}} =0$, while  $\int  g \, d \delta_x =1$. This is a contradiction. 
%\end{proof}

\begin{theorem} \label{metrization}
Let $c\ge 0$. Then $ P = \{ \mu \in \mathbf{DTM}(X): \mu(X) \le c\} $ can be metrized as a separable metric space iff $X$ is a separable metric space.
\end{theorem}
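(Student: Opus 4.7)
The plan is to combine Theorem \ref{basicTP}, which tells us $P$ is compact Hausdorff, with the general fact that a compact Hausdorff space is metrizable iff it is second countable. So for the $(\Leftarrow)$ direction it will suffice to embed $P$ continuously and injectively into a compact metric space. Assuming $X$ is a separable metric space (so locally compact Hausdorff and second countable), $C_0(X)$ is separable in the uniform norm, and I would pick a countable dense set $\{f_n\} \subseteq C_0^+(X)$ and define
$$T \colon P \longrightarrow \prod_{n \in \mathbb{N}} [0, c\|f_n\|], \qquad T(\mu)_n = \mathcal{R}_\mu(f_n).$$
The codomain is a compact metric space, and $T$ is continuous by the very definition of the weak topology.

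The main obstacle is a Lipschitz estimate
$$|\mathcal{R}_\mu(f) - \mathcal{R}_\mu(g)| \le \mu(X)\,\|f-g\| \le c\,\|f-g\|, \qquad f, g \in C_0^+(X),\ \mu \in P,$$
which is not quite recorded in Remark \ref{LipQLF} (the version there either requires a common compact support, or assumes $\mu$ is a topological measure). I would derive it from the layer-cake formula \eqref{rfform}: if $\|f-g\| = \epsilon$, then $\{g>t\} \subseteq \{f > t-\epsilon\}$, so $R_{1,\mu,g}(t) \le R_{1,\mu,f}(t-\epsilon)$; substituting $s = t - \epsilon$ and using $R_{1,\mu,f}(s) = \mu(X)$ for $s < 0$ (because $f \ge 0$) gives $\mathcal{R}_\mu(g) \le \mathcal{R}_\mu(f) + \epsilon \mu(X)$, and symmetry yields the bound. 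With this Lipschitz continuity of $\mathcal{R}_\mu$, $T$ becomes injective: $T(\mu) = T(\nu)$ forces $\mathcal{R}_\mu = \mathcal{R}_\nu$ on a dense subset of $C_0^+(X)$, hence everywhere, hence $\mu = \nu$ by the bijection of Remark \ref{RemBRT}. Since $P$ is compact Hausdorff and the codomain is Hausdorff, the continuous injection $T$ is a homeomorphism onto its image. Therefore $P$ embeds into a compact metric space, so $P$ is metrizable, and being compact metric it is automatically separable.

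For the $(\Rightarrow)$ direction, assume $P$ is separable metric; the degenerate case $c = 0$ gives $P = \{0\}$ and says nothing about $X$, so I will take $c > 0$. The rescaling $\mu \mapsto c\mu$ is a homeomorphism of $\mathbf{DTM}(X)$ since $\mathcal{R}_{c\mu} = c\mathcal{R}_\mu$, so composing it with the embedding $x \mapsto \delta_x$ of Lemma \ref{L6.1} produces a homeomorphism $x \mapsto c\delta_x$ from $X$ onto a subspace of $P$. Subspaces of separable metric spaces are themselves separable metric, so $X$ is separable metric. The real content therefore lies in the forward direction, and specifically in upgrading the compactly-supported Lipschitz bound of Remark \ref{LipQLF} to all of $C_0^+(X)$; the layer-cake argument accomplishes this by exploiting that $f \ge 0$ and $\mu$ is finite.
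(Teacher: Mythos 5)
Your proof is correct, and although its skeleton (embed $P$ into a countable product of intervals by integrating against a countable dense family of functions, and handle the converse via point masses) matches the paper's, several supporting choices are genuinely different and worth recording. The paper obtains its countable family by Urysohn-embedding $X$ into a countable product of unit intervals, passing to a totally bounded metric, and invoking separability of $C_b(X)$; it then asserts that $T$ and $T^{-1}$ are continuous ``as in Theorem~\ref{basicTP}.'' You instead take a countable dense subset of $C_0^+(X)$ directly (second countability of $X$ gives separability of $C_0(X)$, as the paper itself notes via the one-point compactification) and let the compactness of $P$ from Theorem~\ref{basicTP} do the topological work: a continuous injection of a compact space into a Hausdorff space is automatically an embedding, so continuity of $T^{-1}$ comes for free. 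More substantively, you isolate and prove the Lipschitz estimate $|\mathcal{R}_\mu(f)-\mathcal{R}_\mu(g)|\le\mu(X)\,\|f-g\|$ on all of $C_0^+(X)$ via the layer-cake formula (\ref{rfform}); your derivation is sound (monotonicity of $\mu$ on open sets gives $R_{1,\mu,g}(t)\le R_{1,\mu,f}(t-\epsilon)$, and $R_{1,\mu,f}(s)=\mu(X)$ for $s<0$ controls the shift), and this estimate is exactly what the paper's injectivity step (``hence $\int f\,d\mu=\int f\,d\nu$ for all $f\in C_0^+(X)$'') and its continuity-of-$T^{-1}$ step rely on without stating, since Remark~\ref{LipQLF} as written covers only the compactly-supported or topological-measure cases. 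Finally, in the converse direction you correctly observe that for $c<1$ the point masses $\delta_x$ do not lie in $P$, so the appeal to Lemma~\ref{L6.1} needs your rescaling $x\mapsto c\delta_x$, and that for $c=0$ the set $P=\{0\}$ is always separable metrizable, so the stated equivalence genuinely requires $c>0$; both observations patch small gaps in the theorem as stated and proved in the paper.
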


\begin{proof}
Suppose $X$ is a separable metric space. By Urysohn's metrization theorem (see \cite[p.125]{Kelly}) $X$ can be topologically embedded in a countable product of 
unit intervals. Consequently, there exists an equivalent totally bounded metrization on $X$. We will consider this metric on $X$. 
From  \cite[Lemma 6.3]{Parthasarathy}   $C_b(X)$ is separable. Let $\{ f_1, f_2, \ldots \}$  be a countable dense subset of  $C_b(X)$. 

Let $Y$ be a countable product of $\mathbb{R}$. Define a map $T : P \longrightarrow Y$ as in Theorem \ref{basicTP}, i.e.
$T(\mu) = (\int f_1 \, d \mu, \int f_2 \, d \mu, \ldots)$. We will show that $T$ is a homeomorphism on $P$.
First, $T$ is $1-1$. (If $T(\mu) = T(\nu)$ then $\int f_i \, d \mu = \int f_i \, d \nu$ for all $i$, and, hence,  
$\int f \, d \mu = \int f \, d \nu$ for all $ f \in C_0^+(X)$. 
By Remark \ref{RemBRT}, $ \mu = \nu$.) Second, $T$ and $T^{-1}$  are continuous, as in the proof of Theorem \ref{basicTP}. 
Since   $Y$ is a separable metric space, and $P$ is homeomorphic to a subset of  $Y$, it follows that $P$ is a separable metric space. 

Conversely, suppose $P$ is a separable metric space. By Lemma \ref{L6.1} $X$ is homeomorphic to $ D =\{ \delta_x: \, x \in X\}$. $D$ is 
a separable metric space, and then so is $X$.
\end{proof}

\begin{definition} \label{UnTight}
Let $X$ be locally compact.
A family $\mathcal{M}  \subseteq \mathbf{DTM}(X)$ is  uniformly tight if for every $ \epsilon >0$ there exists a compact set $ K_\epsilon$ 
such that $\mu(K_\epsilon) > \epsilon $ for each  $\mu \in \mathcal{M}$.
A family $\mathcal{M}  \subseteq \mathbf{DTM}(X)$ is  uniformly bounded in variation if there is a positive constant $M$ such that $ \| \mu \| \le M$ 
for each $\mu \in \mathcal{M} $. 
\end{definition}

One uniformly bounded in variation family that is the often used is the collection of all normalized (i.e. satisfying condition $\mu(X) = 1$) topological measures on a compact space. 

\begin{proposition} \label{wkseqBd}
Suppose $X$ is locally compact.
If a sequence $(\mu_n) \in \mathbf{DTM}(X)$ is weakly fundamental (i.e. $ \int f \, d\mu_n$ is a fundamental sequence for each $ f \in C_0^+(X)$)  
then it is uniformly  bounded in variation.
\end{proposition}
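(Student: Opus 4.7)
The plan is to argue by contradiction via a gliding hump construction, which is the natural substitute for uniform boundedness / Banach--Steinhaus in this nonlinear setting. Note that the hypothesis immediately gives, for every $f \in C_0^+(X)$, $\sup_n |\mathcal{R}_{\mu_n}(f)| < \infty$, since a Cauchy sequence of reals is bounded. We wish to leverage pointwise boundedness on all of $C_0^+(X)$ to conclude $\sup_n \mu_n(X) < \infty$, using only the tools available for p-conic quasi-linear functionals, namely positive-homogeneity and monotonicity (Remark~\ref{RemBRT}\ref{RHOsvva}), together with the representation formula for $\mu_n(U)$ on open sets (Remark~\ref{RemBRT}\ref{mrDTM}).

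Suppose for contradiction that $\sup_n \mu_n(X) = \infty$. After passing to a subsequence (which I still denote $\mu_n$) I may assume $\mu_n(X) \geq n^3$ for every $n$. Since $X$ is open in itself, the formula
\[
\mu_n(X) = \sup\{\mathcal{R}_{\mu_n}(f) : f \in C_c(X),\ 0 \le f \le 1,\ \operatorname{supp} f \subseteq X\}
\]
from Remark~\ref{RemBRT}\ref{mrDTM} lets me pick, for each $n$, a function $f_n \in C_c(X)$ with $0 \le f_n \le 1$ and $\mathcal{R}_{\mu_n}(f_n) \ge \mu_n(X) - 1 \ge n^3 - 1$.

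Now I glue these together into a single test function. Set
\[
f = \sum_{n=1}^{\infty} \frac{1}{n^2}\, f_n.
\]
The series converges uniformly on $X$ since $\bigl\|n^{-2} f_n\bigr\| \le n^{-2}$, so $f$ is continuous. Moreover, each partial sum $s_N = \sum_{n \le N} n^{-2} f_n$ lies in $C_c(X)$, and a standard argument (for $\epsilon > 0$ pick $N$ with $\|f - s_N\| < \epsilon$; then $\{|f| \ge \epsilon\} \subseteq \operatorname{supp} s_N$, which is compact) shows $f \in C_0^+(X)$. Since every $f_n \ge 0$, I have $f \ge n^{-2} f_n$ for each $n$, hence by monotonicity and positive-homogeneity of $\mathcal{R}_{\mu_n}$ (Remark~\ref{RemBRT}\ref{RHOsvva}(a),(c)),
\[
\mathcal{R}_{\mu_n}(f) \;\ge\; \mathcal{R}_{\mu_n}\!\bigl(n^{-2} f_n\bigr) \;=\; n^{-2}\, \mathcal{R}_{\mu_n}(f_n) \;\ge\; n^{-2}(n^3 - 1) \;=\; n - n^{-2} \;\longrightarrow\; \infty.
\]
This contradicts the assumption that $\mathcal{R}_{\mu_n}(f)$ is a Cauchy sequence (hence bounded) in $\mathbb{R}$.

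There is no real obstacle here; the only subtlety is that I cannot use linearity of the functionals to combine bump functions. That is precisely why the weights $n^{-2}$ suffice: they make the series converge uniformly (giving membership in $C_0^+(X)$), while the doubly-faster growth $n^3$ of $\mu_n(X)$ guarantees that monotonicity and positive-homogeneity alone — without any additivity across the $f_n$'s — already force $\mathcal{R}_{\mu_n}(f)$ to blow up.
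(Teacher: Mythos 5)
Your proof is correct and takes essentially the same route as the paper's: both argue by contradiction, pass to a subsequence along which $\|\mu_n\|$ grows rapidly, choose near-optimal bump functions $f_n \in C_c(X)$, $0 \le f_n \le 1$, from the supremum representation of $\mu_n(X)$, sum them with summable weights to obtain a single test function in $C_0^+(X)$, and use only monotonicity and positive-homogeneity of the quasi-integral to force $\mathcal{R}_{\mu_n}(f) \to \infty$. The only difference is the choice of weights ($n^{-2}$ against growth $n^3$, versus the paper's $2^{-k}$ against $k2^k$), which is immaterial.
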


\begin{proof}
If not, then there is a subsequence $ (\mu_{n_k})$ such that $ \| \mu_{n_k} \| > k 2^k$ for each $k$; 
and by %Definition \ref{SDTMnorDe} and 
part \ref{mrDTM} of Remark \ref{RemBRT} there are functions $f_{n_k} \in C_c(X), 0 \le f_{n_k} \le 1$ such that
$ \int_X  f_{n_k}  \, d\mu_{n_k}   > k 2^k$. Then the function $ f = \sum_{k=1}^\infty \frac{f_{n_k} }{2^k} \in C_0^+(X), \, 0 \le f \le 1$, 
and $ \int_X f  \, d\mu_{n_k} \ge k$ for each $k$. This contradicts the fact that the sequence $( \int f \, d\mu_n)$ is Cauchy, hence, bounded.
%By weak convergence, $ \int_X f  \, d\mu_{n_k}  \longrightarrow \int_X f \, d\mu$, but on the other hand, $ \int_X f \, d\mu \le \| \mu\| < \infty$, 
%which gives a contradiction.
\end{proof}

\begin{theorem} \label{wkfamBd}
Suppose $\mathcal{M}  \subseteq \mathbf{DTM}(X)$ is a family of finite deficient topological measures such that every sequence in $\mathcal{M} $ 
contains a weakly convergent subsequence. Then  $\mathcal{M}$ is uniformly  bounded in variation.
\end{theorem}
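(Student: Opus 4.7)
The plan is to argue by contradiction, leveraging Proposition \ref{wkseqBd} which already does the real work. Suppose $\mathcal{M}$ is \emph{not} uniformly bounded in variation. Then for every $n \in \mathbb{N}$ there exists some $\mu_n \in \mathcal{M}$ with $\|\mu_n\| > n$, giving a sequence $(\mu_n) \subseteq \mathcal{M}$ with $\|\mu_n\| \to \infty$.

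By the hypothesis on $\mathcal{M}$, this sequence contains a weakly convergent subsequence $(\mu_{n_k})$ converging weakly to some $\mu \in \mathbf{DTM}(X)$. In particular, for every $f \in C_0^+(X)$ the numerical sequence $\int f\, d\mu_{n_k}$ converges (to $\int f\, d\mu$), hence is Cauchy. Thus $(\mu_{n_k})$ is weakly fundamental in the sense of Proposition \ref{wkseqBd}.

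Applying Proposition \ref{wkseqBd} to the weakly fundamental sequence $(\mu_{n_k})$, we conclude that $(\mu_{n_k})$ is uniformly bounded in variation, i.e.\ $\sup_k \|\mu_{n_k}\| < \infty$. This directly contradicts $\|\mu_{n_k}\| > n_k \to \infty$. Therefore $\mathcal{M}$ must be uniformly bounded in variation.

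There is essentially no obstacle here since the heavy lifting — constructing the function $f = \sum_k f_{n_k}/2^k$ to detect blow-up in variation via the functional $\mathcal{R}_\mu$ — has already been carried out in Proposition \ref{wkseqBd}. The only thing one might want to double-check is that extracting a weakly convergent subsequence from a sequence witnessing the failure of uniform boundedness genuinely yields a weakly fundamental subsequence in the sense required; this is immediate since weak convergence of $(\mu_{n_k})$ to $\mu$ implies $\int f\, d\mu_{n_k} \to \int f\, d\mu$ for each $f \in C_0^+(X)$, and convergent real sequences are Cauchy.
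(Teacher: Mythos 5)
Your proposal is correct and follows essentially the same route as the paper: take a sequence with $\|\mu_n\| > n$, extract a weakly convergent (hence weakly fundamental) subsequence, and contradict Proposition \ref{wkseqBd}. The only difference is that you explicitly spell out the step that weak convergence implies weak fundamentality, which the paper leaves implicit.
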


\begin{proof}
If not, then there is a sequence $\mu_n \subseteq \mathcal{M} $ such that $ \| \mu_n \| > n $ for every natural $n$.
Let $ m_{n_k}$ be its weakly convergent subsequence. Then $\|  m_{n_k} \| > n_k$, while by Proposition \ref{wkseqBd} this subsequence must be
uniformly  bounded in variation. 
\end{proof} 
  
\begin{theorem} \label{wkfamTgt}
Suppose $X$ is locally compact. 
Suppose $\mathcal{M}   \subseteq \mathbf{TM}(X)$ is a family of finite topological measures such that every sequence in $\mathcal{M} $ 
contains a weakly convergent subsequence. Then $\mathcal{M} $ is uniformly tight. 
\end{theorem}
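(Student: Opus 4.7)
My plan is to argue by contradiction, mimicking the classical Prokhorov converse. Theorem~\ref{wkfamBd} applied to the hypothesis supplies a uniform bound $\|\mu\|\le M$ for all $\mu\in\mathcal{M}$. Assuming $\mathcal{M}$ is not uniformly tight, condition \ref{TM1} on the partition $X = K\sqcup(X\setminus K)$ (valid because $K\in\mathscr{K}(X)$, $X\setminus K\in\mathscr{O}(X)$ and their disjoint union is the open set $X$) gives $\mu(X\setminus K)=\|\mu\|-\mu(K)$, so the failure of tightness yields $\epsilon_0>0$ such that for every compact $K$ some $\mu\in\mathcal{M}$ satisfies $\mu(X\setminus K)\ge\epsilon_0$. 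Iterating Lemma~\ref{easyLeLC} builds an ascending chain $K_1\subseteq K_2^o\subseteq K_2\subseteq K_3^o\subseteq\cdots$ of compact sets together with measures $\mu_n\in\mathcal{M}$ having $\mu_n(X\setminus K_n)\ge\epsilon_0$. By hypothesis a subsequence satisfies $\mu_{n_k}\Longrightarrow\mu_0\in\mathbf{TM}(X)$, and a further extraction ensures $\|\mu_{n_k}\|\to L^{\ast}\in[\epsilon_0,M]$; Aleksandrov's open-set inequality \ref{portm3} applied to $X$ gives $L^{\ast}\ge\|\mu_0\|$.

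To set up the contradiction, fix $\delta\in(0,\epsilon_0)$, use inner regularity of $\mu_0$ to pick a compact $L$ with $\mu_0(L)>\|\mu_0\|-\delta$, choose $n_0$ with $L\subseteq K_{n_0}$, and use Lemma~\ref{easyLeLC} together with part \ref{mrDTM} of Remark~\ref{RemBRT} to select $f\in C_c(X)$ with $1_L\le f\le 1$ and $\mathrm{supp}\,f\subseteq K_{n_0+1}$. On one hand, $\mathcal{R}_{\mu_0}(f)\ge\mu_0(L)>\|\mu_0\|-\delta$; on the other, for $n_k\ge n_0+1$ the inclusion $\mathrm{supp}\,f\subseteq K_{n_k}$, monotonicity of the quasi-integral, and TM-additivity yield
\[
\mathcal{R}_{\mu_{n_k}}(f)\le\mu_{n_k}(K_{n_k})=\|\mu_{n_k}\|-\mu_{n_k}(X\setminus K_{n_k})\le\|\mu_{n_k}\|-\epsilon_0,
\]
so passage to the weak limit gives $\|\mu_0\|-\delta\le L^{\ast}-\epsilon_0$.

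The main obstacle, and the heart of the argument, is closing the gap by establishing $L^{\ast}\le\|\mu_0\|$, i.e.\ that no mass of $\mu_{n_k}$ leaks to infinity: since Aleksandrov's open-set inequality supplies only the reverse estimate, this must be recovered by a separate step. My plan is to use outer regularity of $\mu_0$ together with Lemma~\ref{easyLeLC} to enlarge $L$ to a bounded open $\mu_0$-continuity set $U$ with $\mu_0(U)<\|\mu_0\|+\delta$, then apply the continuity-set clause \ref{portm4} of Theorem~\ref{AleksandrovLC} to obtain $\mu_{n_k}(\overline U)\to\mu_0(\overline U)<\|\mu_0\|+\delta$; combined with the TM-additivity decomposition $\|\mu_{n_k}\|=\mu_{n_k}(\overline U)+\mu_{n_k}(X\setminus\overline U)$ and a parallel estimate dominating $\mu_{n_k}(X\setminus\overline U)$ in the limit (by picking the compact-exhaustion $K_n$ so that $\overline U\subseteq K_{n_1}$ for some $n_1$, so the ``leftover'' open set $X\setminus\overline U$ contains no escaping mass beyond the $\epsilon_0$ already accounted for), one pinches $L^{\ast}\le\|\mu_0\|+O(\delta)$. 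Letting $\delta\to 0$ forces $L^{\ast}=\|\mu_0\|$, collapsing the displayed inequality to the contradiction $-\delta\le-\epsilon_0$.
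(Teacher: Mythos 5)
Your strategy---reduce everything to showing that the weak limit captures all the mass, i.e.\ $L^{\ast}=\lim_k\|\mu_{n_k}\|\le\|\mu_0\|$---has a genuine gap precisely at the step you yourself flag as ``the heart of the argument.'' Theorem~\ref{AleksandrovLC} is asymmetric: it gives $\limsup\mu_{n_k}(K)\le\mu_0(K)$ only for \emph{compact} $K$ and $\liminf\mu_{n_k}(U)\ge\mu_0(U)$ for open $U$; it never supplies an upper bound for $\mu_{n_k}$ on an unbounded open set such as $X\setminus\overline{U}$. Your continuity-set step does yield $\mu_{n_k}(\overline{U})\to\mu_0(\overline{U})$, but then \ref{TM1} gives $\mu_{n_k}(X\setminus\overline{U})=\|\mu_{n_k}\|-\mu_{n_k}(\overline{U})\to L^{\ast}-\mu_0(\overline{U})$, and the parenthetical claim that the leftover open set ``contains no escaping mass beyond the $\epsilon_0$ already accounted for'' is exactly the uniform tightness being proved; nothing in the construction of the chain $K_n$ delivers it. Concretely, for $X=\mathbb{R}$ and the point masses $\delta_n$ one has $\int f\,d\delta_n=f(n)\to 0$ for every $f\in C_0^+(\mathbb{R})$, so $\delta_n$ converges weakly to the zero measure while $\|\delta_n\|\equiv 1$; hence $L^{\ast}=1>0=\|\mu_0\|$ is perfectly compatible with $C_0$-weak convergence, and no pinching argument based only on Theorem~\ref{AleksandrovLC} can force $L^{\ast}=\|\mu_0\|$. (A secondary, more repairable defect: your chain $K_1\subseteq K_2^o\subseteq K_2\subseteq\cdots$ is not an exhaustion of $X$, so the compact set $L$ produced \emph{after} extracting the subsequence need not be contained in any $K_{n_0}$.)

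The paper avoids this trap by a different mechanism: it uses the failure of tightness to manufacture \emph{pairwise disjoint} compact sets $\overline{V_j}$ with $\mu_j(\overline{V_j})>\epsilon$ and disjointly supported functions $f_j$, encodes the data in the vectors $a_n=(\int_X f_i\,d\mu_n)_{i}$, which lie in $l^1$ with $\|a_n\|_1\le M$ by orthogonal additivity, and then invokes a Schur-type result \cite[Lemma 1.3.7]{Bogachev:WkConv} to upgrade the convergence inherited from weak convergence of $(\mu_n)$ to $l^1$-norm convergence; this forces $a_n^n\to 0$, contradicting $a_n^n>\epsilon$. If you want to salvage your route you would need an independent argument that no mass of the subsequence leaks to infinity, and that is not obtainable from the hypotheses in the way you have used them.
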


\begin{proof}
Suppose $\mathcal{M} $ is not uniformly tight. Then there exists $ \epsilon>0$ such that for every compact $K$ one can find $\mu^K \in \mathcal{M}$ 
with 
\begin{align} \label{vMuKe}
\mu^K (X \setminus K) > \epsilon.
\end{align}
Take $\mu_1$ to be any topological measure with $ \| \mu_1 \|  > \epsilon$, and let $ K_1 \in \mathscr{K}(X)$ be such that $ \mu(K_1) > \epsilon$. 
Then by Lemma \ref{easyLeLC} there is $V_1 \in \mathscr{O}(X) $ with compact closure such that $ K_1 \subseteq V_1$ and  so $ \mu_1(\overline{V_1}) > \epsilon$. 
By (\ref{vMuKe}) find $ \mu_2$ satisfying $\mu_2(X \setminus \overline{V_1}) >\epsilon$, and let $K_2 \in \mathscr{K}(X)$ 
be such that  $ K_2 \subseteq X \setminus \overline{V_1} $ and $ \mu(K_2) > \epsilon$. 
Find $V_2 \in \mathscr{O}(X)$ with compact closure such that 
$K_2 \subseteq V_2 \subseteq \overline{V_2} \subseteq X \setminus \overline{V_1} $,  so  $ \mu_2(\overline{V_2}) > \epsilon$. 
Find a topological measure $\mu_3$ with $\mu_3( X \setminus (\overline{V_1} \sqcup \overline{V_2}) > \epsilon$, and so on.
By induction we find a sequence of compact sets $ K_j$, a sequence of open sets $V_j$ with compact closure, 
and a sequence of topological measures $\mu_j \in \mathcal{M} $
with the following properties: $ K_j \subseteq V_j \subseteq \overline{V_j}$, $ \overline{V_j}$ are pairwise disjoint, and 
$$  \quad  \mu_j(\overline{V_j}) \ge \mu_j(K_j) > 
\epsilon, \quad  \quad K_{j+1} \subseteq \overline{V_{j+1}} \subseteq X \setminus \bigsqcup_{i=1}^j \overline{V_i}.$$
By part \ref{mrDTM} of Remark \ref{RemBRT} find functions $f_j \in C_c(X), 1_K \le f_j \le 1, supp f_j \subseteq V_j, $ 
with  $ \int_X f_j \, d \mu_j > \epsilon$.
By our assumption the sequence $ (\mu_j) $ contains a weakly convergent subsequence.
For notational simplicity, assume that  $ (\mu_j) $ is weakly convergent. 

By Lemma \ref{wkfamBd} we may assume that $\mathcal{M}$ is uniformly bounded in variation by $M$.
 We let 
$$ a_n^i = \int_X f_i \, d \mu_n. $$
Then $a_n:= (a_n^1, a_n^2, \ldots,)$ belongs to $l^1$, because  for each $m \in \mathbb{N}$,
$f_1 \cdot f_2 \cdot \ldots \cdot f_m = 0, f_1 + \ldots + f_m \in C_c(X), 0 \le  f_1 + \ldots + f_m \le 1$,
%the product of finitely many functions $f_1, \ldots, f_m$ is $0$, their sum is $ \le 1$,  
and so by part \ref{mrDTM} of Remark \ref{RemBRT}
each partial sum $\sum_{i=1}^m a_n^i = \int_X (f_1 + f_2 + \ldots f_m) \, d \mu_n \le \|\mu_n \| \le M$.
With
$$b_n =\sum_{i=1}^\infty  \int_X f_i \, d \mu_n = \| a_n \|_1 \le M,$$
the sequence $( b_n)$ is bounded, and we may chose a convergent  subsequence. To simplify notations, 
we assume that $(b_n)$ itself converges.

Let  $\lambda = (\lambda_i) \in l^\infty$. 
Since $ |\langle \lambda, a_n \rangle | \le \| \lambda \|_{\infty} \, \| a_{n} \|_{1} \le  \| \lambda \|  M,$ 
% $$ |\langle \la, a_n \rangle | \le \| \la \| \sum_{i=1}^\infty  \int_X f_i \, d \mu_n \le  \| \la \|  M,$$
we see that the sequence of inner products  $\langle \lambda, a_n \rangle$ is bounded, hence, contains a convergent subsequence. 
Again, for notational simplicity
we assume the sequence itself converges. 

By \cite[Lemma 1.3.7]{Bogachev:WkConv} the sequence $(a_n)$ converges in $l_1-$norm. 
Then $\lim_{n \rightarrow \infty} a_n^n = 0$, which contradicts our choice of $f_n$.
\end{proof}
 
\begin{lemma} \label{wkfuncon}
Let $X$ be locally compact. If $(\mu_n)$ is a weakly fundamental sequence of finite deficient topological measures which is also 
uniformly bounded in variation, then $ \mu_n$ converges weakly to some finite deficient topological measure $ \mu$.
\end{lemma}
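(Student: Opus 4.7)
The plan is to exploit the bijection $\mathbf{DTM}(X) \leftrightarrow \mathbf{\Phi^+}(C_0^+(X))$ from Remark \ref{RemBRT}. Since the sequence $(\mu_n)$ is weakly fundamental, for every $f \in C_0^+(X)$ the numerical sequence $\mathcal{R}_{\mu_n}(f) = \int f \, d\mu_n$ is Cauchy in $\mathbb{R}$, hence convergent. I would therefore define
\[
  \rho(f) \;:=\; \lim_{n\to\infty} \mathcal{R}_{\mu_n}(f), \qquad f \in C_0^+(X),
\]
and then show that $\rho \in \mathbf{\Phi^+}(C_0^+(X))$. The desired $\mu$ is the finite deficient topological measure associated to $\rho$ by the bijection (its values on compact and open sets being given by the formulas in part \ref{mrDTM} of Remark \ref{RemBRT}); then $\mathcal{R}_\mu = \rho$, so $\mathcal{R}_{\mu_n}(f) \to \mathcal{R}_\mu(f)$ for every $f \in C_0^+(X)$, which is exactly weak convergence $\mu_n \Longrightarrow \mu$.

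The verification that $\rho$ is a bounded p-conic quasi-linear functional is then a matter of passing each of the defining properties of $\mathcal{R}_{\mu_n}$ to the pointwise limit. Property (p1) (orthogonal additivity on nonnegative functions) follows from the orthogonal additivity of each $\mathcal{R}_{\mu_n}$ stated in item (V) of Remark \ref{RemBRT}; property (p2) (monotonicity) from item (III); and property (p3) (linearity on each singly generated cone $A^+(f)$) from the fact that each $\mathcal{R}_{\mu_n}$ lies in $\mathbf{\Phi^+}(C_0^+(X))$ and so is linear on such cones. For boundedness, the inequality $\mu(X) \cdot \inf f \le \mathcal{R}_\mu(f) \le \mu(X) \cdot \sup f$ from item (II)(d) of Remark \ref{RemBRT} combined with $\|\mu_n\| \le M$ yields $|\mathcal{R}_{\mu_n}(f)| \le M \|f\|$ for $f \in C_0^+(X)$, so passing to the limit gives $\|\rho\| \le M$; also $\rho(0)=0$ is finite.

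None of these steps is really delicate: the weakly fundamental hypothesis supplies existence of the limit, the uniform variation bound supplies boundedness, and each of the algebraic properties of a p-conic quasi-linear functional is preserved under pointwise limits. The only mildly nontrivial point is invoking the bijection in Remark \ref{RemBRT} to \emph{produce} the measure $\mu$ from $\rho$; this is where the representation theorem does the real work. Once $\mu$ is in hand, Definition \ref{defwk} immediately gives $\mu_n \Longrightarrow \mu$, and $\mu \in \mathbf{DTM}(X)$ is finite because $\mu(X) = \rho$ applied in the limit sense is bounded by $M$.
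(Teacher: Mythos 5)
Your proposal is correct and follows essentially the same route as the paper: the paper likewise defines $L(f)=\lim_n\int_X f\,d\mu_n$ on $C_0^+(X)$, notes that $L$ is a p-conic quasi-linear functional bounded by the uniform variation bound $M$, and invokes Remark \ref{RemBRT} to produce the finite deficient topological measure $\mu$ representing $L$. Your write-up merely spells out in more detail the verification that the defining properties pass to the pointwise limit, which the paper leaves as ``easy to check.''
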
 

\begin{proof}
Consider functional $L$ on $ C_0^+(X)$ defined as $L(f) = \lim_n \int_X f \, d\mu_n$. It is easy to check that $L$ is a p-conic quasi-linear functional. 
Say, $(\mu_n)$ is uniformly bounded in variation by $M$. Since $ L(f) \le \| \mu_n\|  \le M$ for any $ f\in C_0^+(X), 0 \le f \le1$, 
we see that $L \in \mathbf{\Phi^+}(C_0^+(X))$, and
by Remark \ref{RemBRT} there is a finite deficient topological measure $ \mu$ such that $L(f) = \int_X f \, d \mu$.
\end{proof} 

\begin{theorem} \label{EbSmul}
Suppose $X$ is a locally compact space such that $C_0^+(X)$ is separable. 
Then every uniformly bonded in variation sequence of finite topological measures
has a subsequence which is weakly fundamental.
\end{theorem}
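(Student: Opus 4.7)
The plan is to use a Helly-style diagonal extraction argument combined with the global Lipschitz property from Remark \ref{LipQLF}, which is the precise reason this theorem is stated for topological measures rather than for deficient topological measures.

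First, since $C_0^+(X)$ is separable, fix a countable dense subset $\{f_1, f_2, \ldots\} \subseteq C_0^+(X)$. Let $(\mu_n)$ be a sequence of finite topological measures with $\|\mu_n\| \le M$ for all $n$. By property (d) in Remark~\ref{RemBRT}\ref{RHOsvva}, we have $|\mathcal{R}_{\mu_n}(f_i)| \le M\|f_i\|$ for each $i$ and $n$, so each numerical sequence $\bigl(\mathcal{R}_{\mu_n}(f_i)\bigr)_n$ is bounded.

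Next, I would apply a standard diagonal extraction. For $i=1$, extract a subsequence $(\mu_{n_k}^{(1)})$ along which $\mathcal{R}_{\mu_n}(f_1)$ converges. Inductively, from $(\mu_{n_k}^{(i-1)})$ extract $(\mu_{n_k}^{(i)})$ along which $\mathcal{R}(f_i)$ converges. Let $\nu_k = \mu_{n_k}^{(k)}$ be the diagonal subsequence. Then $\mathcal{R}_{\nu_k}(f_i)$ converges as $k\to\infty$ for every $i$.

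The key step is to upgrade convergence on the dense set $\{f_i\}$ to convergence on all of $C_0^+(X)$. Here I would use the Lipschitz estimate from Remark~\ref{LipQLF} for finite topological measures: for every $f,g\in C_0(X)$,
\[
\Bigl|\int_X f\,d\nu_k - \int_X g\,d\nu_k\Bigr| \le 2\|f-g\|\,\nu_k(X) \le 2M\|f-g\|.
\]
Given any $f\in C_0^+(X)$ and $\epsilon>0$, choose $f_i$ with $\|f-f_i\| < \epsilon/(6M)$. Then for all $k,\ell$,
\begin{align*}
\bigl|\mathcal{R}_{\nu_k}(f)-\mathcal{R}_{\nu_\ell}(f)\bigr|
&\le \bigl|\mathcal{R}_{\nu_k}(f)-\mathcal{R}_{\nu_k}(f_i)\bigr|
+ \bigl|\mathcal{R}_{\nu_k}(f_i)-\mathcal{R}_{\nu_\ell}(f_i)\bigr| \\
&\qquad + \bigl|\mathcal{R}_{\nu_\ell}(f_i)-\mathcal{R}_{\nu_\ell}(f)\bigr| \\
&\le \tfrac{\epsilon}{3} + \bigl|\mathcal{R}_{\nu_k}(f_i)-\mathcal{R}_{\nu_\ell}(f_i)\bigr| + \tfrac{\epsilon}{3},
\end{align*}
and the middle term is below $\epsilon/3$ for all $k,\ell$ sufficiently large, since $\mathcal{R}_{\nu_k}(f_i)$ is Cauchy. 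Hence $\mathcal{R}_{\nu_k}(f)$ is Cauchy, i.e. $(\nu_k)$ is weakly fundamental.

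The main obstacle, and the reason the theorem is restricted to topological measures, is the availability of the \emph{global} Lipschitz estimate. For a merely deficient topological measure one only has the weaker Lipschitz bound on functions with a common compact support, which would not let us pass from a dense subset of $C_0^+(X)$ to arbitrary $f\in C_0^+(X)$ without first establishing uniform tightness. The topological-measure hypothesis, together with the uniform bound on $\|\mu_n\|$, removes this difficulty and lets the diagonal argument go through cleanly.
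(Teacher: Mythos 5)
Your proof is correct and follows essentially the same route as the paper: diagonal extraction over a countable dense subset of $C_0^+(X)$, followed by the Lipschitz estimate of Remark~\ref{LipQLF} together with the uniform bound $\|\mu_n\|\le M$ to upgrade Cauchyness from the dense set to all of $C_0^+(X)$. The only divergence is at the single-function step: where you extract a convergent subsequence of $\mathcal{R}_{\mu_n}(f_i)$ directly by Bolzano--Weierstrass from the bound $|\mathcal{R}_{\mu_n}(f_i)|\le M\|f_i\|$, the paper applies the Helly--Bray theorem to the monotone, uniformly bounded functions $R_{2,\mu_n,f_i}$ and then integrates; your version is more elementary and loses nothing, since only convergence of the integrals (not of the distribution functions) is needed.
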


\begin{proof}
Suppose $(\mu_n) \in \mathbf{DTM}(X)$ and $ \| \mu_n \| \le M$ for each $n$. Let $g \in C_0^+(X)$,  so $0 \le g \le b$ for some $b$. 
Each of the functions $R_{2, \mu_n, g} (t) $ is  monotone and  bounded above by $M$ on $[0,b]$. 
By the Helly-Bray theorem (see \cite[Theorem 1.4.6]{Bogachev:WkConv}), 
there is pointwise convergent subsequence $R_{2, \mu_{n_i}, g} $. Then the sequence 
of integrals $ \int_X g \, d \mu_{n_i} = \int_0^b R_{2, \mu_{n_i}, g}(t) dt $ 
converges, hence, is fundamental.  

If $G$ is a countable dense set in  $C_0^+(X)$, we pick a first subsequence of $ (n_i)$ such that 
$ ( \int_X g_1 \, d \mu_{n_i}) $ is fundamental for the first function $g_1 \in G$,
then we choose a further subsequence $(n_{i_j})$ for which 
$ ( \int_X g_2 \, d \mu_{n_{i_j}}) $ is fundamental for the function $g_2 \in G$, and so on. 
By diagonal process we obtain a subsequence of $(\mu_n)$ for which 
the sequence of integrals is fundamental for each $ g \in G$. For notational simplicity, let us assume that $(\mu_n)$ is such a subsequence, i.e. 
$ ( \int_X g \, d \mu_n) $ is fundamental for each function $g \in G$.  

For arbitrary $ f \in  C_0^+(X)$ and $ \epsilon >0$ choose $ g \in G$ such that $ \| f - g \| \le \epsilon$ 
and $n_0$ such that $ | \int_X g \, d\mu_n - \int_X g \, d \mu_i | < \epsilon$ for $n , i \ge n_0$.
Then using \cite[Corollary 53]{Butler:QLFLC} we have:
\begin{align*} 
| &\int_X f \, d\mu_n - \int_X f \, d \mu_i |  \\
&\le | \int_X f \, d\mu_n - \int_X g \, d \mu_n |  +  | \int_X g \, d\mu_n - \int_X g \, d \mu_i |  +  
| \int_X g \, d\mu_i - \int_X f \, d \mu_i |  \\
&\le \| f - g \| \| \mu_n \| + \epsilon +  \| f - g \| \| \mu_i \|   \le 2 \epsilon M + \epsilon,
\end{align*}  
and the sequence of integrals $ ( \int_X f \, d \mu_n) $ is fundamental. Thus, $ ( \mu_n)$ is weakly fundamental.
\end{proof}  

\begin{remark} 
If $X$ is a locally compact Hausdorff space which is second countable or satisfies any of the other equivalent conditions 
of \cite[Theorem 5.3, p.29]{Kechris}, then 
$\hat{X}$, the Aleksandrov one-point compactification of $X$, is a compact metrizable (hence, a second countable) space. 
Then $C(\hat{X})$ is separable, and $C_0(X)$ is also separable as as a subspace of a separable metric space.
\end{remark} 

For topological measures we have the following version of Prokhorov's well-known  theorem.

\begin{theorem} \label{Proh2}
Suppose $X$ is a locally compact space such that $C_0^+(X)$ is separable. 
Suppose $\mathcal{M}$ is a family of finite topological measures on $X$.
The the following are equivalent:
\begin{enumerate}
\item \label{ProhUsl1}
If every sequence from $\mathcal{M}$ contains a weakly convergent subsequence then $\mathcal{M}$ is uniformly tight and uniformly bounded in variation.
\item \label{ProhUsl2}
If $\mathcal{M}$ is uniformly bounded in variation then every sequence from $\mathcal{M}$ contains a weakly convergent subsequence. 
\end{enumerate}
\end{theorem}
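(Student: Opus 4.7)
The plan is to observe that statements (1) and (2) are each provable directly and unconditionally from results already established in the paper, so the equivalence is obtained without any new argument: both implications are true, hence trivially equivalent. The proof is essentially a bookkeeping exercise that repackages prior work in Prokhorov form.

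For the implication asserted in (2), I would assume $\mathcal{M}$ is uniformly bounded in variation and take an arbitrary sequence $(\mu_n) \subseteq \mathcal{M}$. Because $C_0^+(X)$ is separable, Theorem \ref{EbSmul} extracts a subsequence $(\mu_{n_k})$ that is weakly fundamental. This subsequence inherits the same uniform bound on the variation, so Lemma \ref{wkfuncon} upgrades weak fundamentality to genuine weak convergence toward some finite deficient topological measure $\mu \in \mathbf{DTM}(X)$. Thus every sequence in $\mathcal{M}$ admits a weakly convergent subsequence, which is the conclusion of (2).

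For the implication asserted in (1), I would assume every sequence in $\mathcal{M}$ admits a weakly convergent subsequence. Theorem \ref{wkfamBd} immediately yields that $\mathcal{M}$ is uniformly bounded in variation, since its hypothesis holds for arbitrary $\mathcal{M} \subseteq \mathbf{DTM}(X)$. The hypothesis $\mathcal{M} \subseteq \mathbf{TM}(X)$ of the present theorem then allows me to invoke Theorem \ref{wkfamTgt}, which gives uniform tightness. Combining these yields the conclusion of (1).

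Since both implications hold unconditionally under the standing assumptions on $X$, statements (1) and (2) are each true and therefore equivalent. The only small subtlety to flag is that in the argument for (2) the weak limit produced by Lemma \ref{wkfuncon} lies in $\mathbf{DTM}(X)$ and need not itself belong to $\mathbf{TM}(X)$; but this is harmless, since weak convergence in (2) is understood in the ambient space $\mathbf{DTM}(X)$ of deficient topological measures. I do not foresee any real obstacle: all the analytic content—the Helly--Bray plus diagonal extraction behind Theorem \ref{EbSmul}, the Schur-type $\ell^1$ argument behind Theorem \ref{wkfamTgt}, and the p-conic quasi-linear functional identification behind Lemma \ref{wkfuncon}—has already been carried out in the earlier results.
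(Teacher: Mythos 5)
Your proposal is correct and matches the paper's own proof exactly: statement (1) is obtained from Theorem \ref{wkfamBd} together with Theorem \ref{wkfamTgt}, and statement (2) from Theorem \ref{EbSmul} together with Lemma \ref{wkfuncon}, so the ``equivalence'' holds because both implications are proved unconditionally. Your added remark that the weak limit in (2) lives in $\mathbf{DTM}(X)$ is a fair observation, but it does not change the argument.
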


\begin{proof}
(\ref{ProhUsl1}) follows from Theorem \ref{wkfamBd} and Theorem \ref{wkfamTgt}. 
(\ref{ProhUsl2}) follows from Theorem \ref{EbSmul} and Lemma \ref{wkfuncon}.
\end{proof}

\section{Prokhorov and Kantorovich-Rubenstein metrics}

It is clear that $d_o(\mu, \nu) = \sup \{| \int_X f \, d\mu -  \int_X f \, d\nu|: f \in C_0^+(X) \} $ is a metric on  $ \mathbf{DTM}(X)$, and the topology
induced by this metric is the weak topology. 
 
For the rest of this section let $(X, d)$ be a locally compact metric space. We shall consider two other metrics on $ \mathbf{DTM}(X)$. 

Let $A^t = \{ x \in X: d(x, A) < t \}$ for $ A \in \mathscr{O}(X) \cup \mathscr{C}(X), A \ne \emptyset$, and $\emptyset^t = \emptyset$ 
for all $ t >0$.
Each $A^t$ is an open set.
Consider the Prokhorov metric $d_{\text{P}}$ on  $ \mathbf{DTM}(X)$:
\begin{align*}
 d_{\text{P}}(\mu, \nu) &= \inf\{ t >0: \, \mu(A) \le \nu(A^t) + t, \ \nu(A) \le \mu(A^t) + t,  \\
&\forall A \in \mathscr{O}(X) \cup \mathscr{K}(X) \}.
\end{align*}
Taking $ t = \| \mu\| +   \| \nu\| $ we see that  $\inf$ is well defined.

Note that if $\mu$ and $ \nu$ are Borel measures and $A$ is a Borel set, then we obtain the usual definition of Prokhorov's metric 
(sometimes also called L\'{e}vy-Prokhorov metric).   

\begin{lemma} \label{dPmetric}
$d_{\text{P}}$  is a metric on $ \mathbf{DTM}(X)$.
\end{lemma}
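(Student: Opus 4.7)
The plan is to verify the four metric axioms in turn, drawing on monotonicity and outer regularity of DTMs together with elementary metric facts. Symmetry is immediate from the symmetric definition. Finiteness follows by observing that $t_0 := \|\mu\| + \|\nu\|$ witnesses the defining inequalities trivially (since $\mu(A) \le \|\mu\| \le t_0 \le \nu(A^{t_0}) + t_0$ and symmetrically), so $d_{\text{P}}(\mu,\nu) \le t_0 < \infty$. That $d_{\text{P}}(\mu,\mu) = 0$ holds because every DTM is monotone on $\mathscr{O}(X) \cup \mathscr{C}(X)$ (a routine consequence of inner and outer regularity): $A \subseteq A^t$ gives $\mu(A) \le \mu(A^t) \le \mu(A^t) + t$ for every $t > 0$.

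Both nontrivial axioms rest on a preliminary observation I would state first: if some $t$ witnesses the defining inequalities, so does every $s > t$. Indeed $A^t \subseteq A^s$ and monotonicity give $\mu(A) \le \nu(A^t) + t \le \nu(A^s) + s$, and symmetrically. Hence for every $s > d_{\text{P}}(\mu,\nu)$ the inequalities $\mu(A) \le \nu(A^s) + s$ and $\nu(A) \le \mu(A^s) + s$ hold for all $A \in \mathscr{O}(X) \cup \mathscr{K}(X)$. For the separation property, suppose $d_{\text{P}}(\mu,\nu) = 0$, so these inequalities hold for every $s > 0$. Given $K \in \mathscr{K}(X)$ and $\epsilon > 0$, outer regularity \ref{DTM3} of $\nu$ yields open $U \supseteq K$ with $\nu(U) < \nu(K) + \epsilon/2$; since $X$ is metric, the continuous function $x \mapsto d(x, X \setminus U)$ attains a positive minimum on the compact set $K$, producing $s_0 > 0$ with $K^{s_0} \subseteq U$. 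For any $s < \min\{s_0, \epsilon/2\}$ we then get $\mu(K) \le \nu(K^s) + s \le \nu(U) + s < \nu(K) + \epsilon$. Letting $\epsilon \to 0$ and swapping $\mu, \nu$ yields $\mu(K) = \nu(K)$ for every compact $K$, so $\mu = \nu$ by the uniqueness remark following Definition \ref{DTM}.

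For the triangle inequality I would fix $s > d_{\text{P}}(\mu,\nu)$ and $t > d_{\text{P}}(\nu,\lambda)$, apply the strengthened inequality of $(\mu,\nu)$ to $A$ and that of $(\nu,\lambda)$ to the open set $A^s$, and invoke the metric inclusion $(A^s)^t \subseteq A^{s+t}$ (immediate from $d(x,A) \le d(x,y) + d(y,A)$) together with monotonicity of $\lambda$ on open sets to obtain $\mu(A) \le \nu(A^s) + s \le \lambda((A^s)^t) + s + t \le \lambda(A^{s+t}) + (s+t)$, and symmetrically for $\lambda(A)$. This gives $d_{\text{P}}(\mu,\lambda) \le s + t$, and taking infima in $s$ and $t$ yields the triangle inequality. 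I expect the main obstacle to be the separation step, where one must convert neighborhood-scale bounds into pointwise equality on compact sets; this is precisely the point at which outer regularity of DTMs and the metric-topological fact that $\{K^s\}_{s>0}$ is a neighborhood base of any compact $K$ must be combined, whereas the triangle inequality reduces to a routine calculation in the metric.
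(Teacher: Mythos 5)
Your proposal is correct and follows essentially the same route as the paper: monotonicity for $d_{\text{P}}(\mu,\mu)=0$, outer regularity plus the fact that $K^{s}\subseteq U$ for small $s$ to get the separation property, and the inclusion $(A^{s})^{t}\subseteq A^{s+t}$ with monotonicity for the triangle inequality. Your preliminary observation that the set of witnessing $t$'s is upward closed is a harmless tidying of the paper's choice of a sequence $t_n\searrow 0$, and the rest matches the paper's argument step for step.
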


\begin{proof}
It is clear that $d_{\text{P}} \ge 0$ and $ d_{\text{P}}(\mu, \nu) = d_{\text{P}}( \nu, \mu)$. 
For any $ A \in \mathscr{O}(X) \cup \mathscr{C}(X)$ we have $ \mu(A) \le \mu(A^t )+ t$ for all $ t >0$, so $d_{\text{P}}(\mu, \mu) = 0$.
Suppose $ d_{\text{P}}(\mu, \nu) = 0$.Then there is $ t_n \searrow 0$ such that 
$ \mu(K) \le \nu(K^{t_n}) + t_n$ and $ \nu(K) \le \mu(K^{t_n}) + t_n$
for all $ K \in \mathscr{K}(X)$.
For $K \in \mathscr{K}(X)$ and $ \epsilon >0$ choose $U \in \mathscr{O}(X)$ such that $ K \subseteq U$ and $\nu(U) < \nu(K) + \epsilon$. 
There exists $ r>0$ such that $K^r\subseteq U$. 
%(see, for example, \cite[p. 177]{Munkres}).
Then for $ t_n < r$
$$ \mu(K) \le \nu(K^{t_n}) + t_n \le \nu(U) + t_n  \le \nu(K)  + \epsilon +  t_n.$$
It follows that $ \mu(K) \le \nu(K)$, and, similarly, $ \nu(K) \le \mu(K)$. Then $ \mu = \nu$ on $ \mathscr{K}(X)$, so $ \mu = \nu$.

Now we shall show the triangle inequality. Suppose that for all $A \in \mathscr{O}(X) \cup \mathscr{K}(X)$
$$ \mu(A) \le \lambda(A^t)  + t, \quad \lambda(A) \le \mu(A^t)  + t,$$
$$ \lambda(A) \le \nu(A^r)  + r, \quad  \nu(A) \le \lambda(A^r)  + r. $$
Since $(A^t)^r \subseteq A_{t + r}$ and $ (A^r)^t \subseteq A_{t + r}$, we have:
$$ \mu(A) \le \lambda(A^t)  + t \le \nu(A^t)^r + t + r \le \nu(A_{t + r}) + t + r,$$  
and, similarly, $ \nu(A)  \le \mu(A_{t + r}) + t + r$. Thus, $ d_{\text{P}}(\mu, \nu) \le t + r$. It follows  that
$ d_{\text{P}}(\mu, \nu) \le d_{\text{P}}( \mu, \lambda) + d_{\text{P}}(\lambda, \nu)$.
\end{proof}

\begin{theorem} \label{dPwkconv}
Let  $(X, d)$ be a locally compact metric space. 
Suppose $ d_{\text{P}}( \mu_{\alpha}, \mu) \rightarrow 0$ for a net  $(\mu_{\alpha})$; $ \mu_{\alpha}, \mu \in \mathbf{DTM}(X)$. 
Then $ \mu_{\alpha} \Longrightarrow \mu$.
\end{theorem}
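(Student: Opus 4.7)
The plan is to reduce the statement to Aleksandrov's theorem (Theorem \ref{AleksandrovLC}), which is available because $X$ is locally compact. Concretely, I will verify characterisation \ref{portm3}: for every open $U$, $\liminf \mu_\alpha(U) \ge \mu(U)$, and for every compact $K$, $\limsup \mu_\alpha(K) \le \mu(K)$. Both inequalities will come out of the two Prokhorov estimates $\mu(A) \le \mu_\alpha(A^t) + t$ and $\mu_\alpha(A) \le \mu(A^t) + t$, valid for every $A \in \mathscr{O}(X) \cup \mathscr{K}(X)$ as soon as $d_{\text{P}}(\mu_\alpha, \mu) < t$, which by hypothesis is eventually the case for any preassigned $t > 0$.

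For the open-set inequality, fix $U \in \mathscr{O}(X)$ and a compact $C \subseteq U$. Since $C$ is compact, $X \setminus U$ is closed, and they are disjoint, the number $r = d(C, X \setminus U)$ is strictly positive (the case $U = X$ being trivial). For any $0 < t < r$ one has $C^t \subseteq U$, and eventually $d_{\text{P}}(\mu_\alpha, \mu) < t$, so $\mu(C) \le \mu_\alpha(C^t) + t \le \mu_\alpha(U) + t$. Hence $\liminf_\alpha \mu_\alpha(U) \ge \mu(C) - t$; sending $t \to 0$ and then taking the supremum over compact $C \subseteq U$ via inner compact regularity \ref{DTM2} gives $\liminf_\alpha \mu_\alpha(U) \ge \mu(U)$.

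For the compact-set inequality, fix $K \in \mathscr{K}(X)$ and $\epsilon > 0$. Outer regularity \ref{DTM3} supplies an open $U \supseteq K$ with $\mu(U) < \mu(K) + \epsilon$, and the same distance argument yields $r > 0$ with $K^r \subseteq U$. For $0 < t < r$ and $\alpha$ large enough,
\[
\mu_\alpha(K) \le \mu(K^t) + t \le \mu(U) + t < \mu(K) + \epsilon + t,
\]
so $\limsup_\alpha \mu_\alpha(K) \le \mu(K) + \epsilon + t$, and letting first $t \to 0$ and then $\epsilon \to 0$ yields $\limsup_\alpha \mu_\alpha(K) \le \mu(K)$. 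Together the two inequalities are exactly \ref{portm3}, and Theorem \ref{AleksandrovLC} delivers $\mu_\alpha \Longrightarrow \mu$.

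The only real content beyond Aleksandrov's theorem is the metric ``thickening'' device: in a metric space a compact set sits at a positive distance from any disjoint closed set, so for all sufficiently small $t$ the thickening $A^t$ does not escape a preselected open neighbourhood. No uniform tightness or boundedness hypothesis on the net is needed, because the Prokhorov bound controls set-values directly; the main bookkeeping obstacle is simply making sure the quantifiers on $t$, $\alpha$, $\epsilon$ and the approximating compact/open sets are chained in the correct order.
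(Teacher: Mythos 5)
Your proof is correct and follows essentially the same route as the paper: both verify condition \ref{portm3} of Theorem \ref{AleksandrovLC} by combining the two Prokhorov estimates with inner compact regularity (for open sets) and outer regularity (for compact sets), using the positive distance between a compact set and a disjoint closed set to keep the thickening $A^t$ inside a preselected open neighbourhood. The only cosmetic difference is that the paper makes the infimum in the definition of $d_{\text{P}}$ explicit via a parameter $t_\alpha < \delta$, whereas you absorb it into the monotonicity of $t \mapsto \mu_\alpha(A^t) + t$; both are fine.
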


\begin{proof}
Suppose $ d_{\text{P}}( \mu_{\alpha}, \mu) \rightarrow 0$. 

Let $K \in \mathscr{K}(X)$ and $\epsilon >0$. Choose $ U \in \mathscr{O}(X)$ such that $ K \subseteq U$ and $ \mu(U) < \mu(K) + \epsilon $. 
There exists $ r>0$ such that  $K^t \subseteq U$ for all $t \le r$. For $ \delta = min \{r, \epsilon \}$ let $ \alpha_0$ be such that 
$d_{\text{KR}}( \mu_{\alpha}, \mu) < \delta$ for each $ \alpha \ge \alpha_0$. 
Then for each  $ \alpha \ge \alpha_0$ there exists $t_{\alpha} < \delta$ such that 
$ \mu_{\alpha}(K) \le \mu(K^{t_{\alpha}}) + t_{\alpha} \le \mu(U) + \epsilon \le \mu(K) + 2 \epsilon$.
Then 
$$ \limsup \mu_{\alpha}(K) \le \mu(K) + 2 \epsilon.$$
It follows that $ \limsup \mu_n(K) \le \mu(K)$.

Now let $U \in \mathscr{O}(X)$ and $\epsilon >0$. Choose $ K \in \mathscr{K}(X)$  such that $ K \subseteq U$ and $ \mu(K) > \mu(U) - \epsilon $. 
Let $r, \delta$ and $ \alpha_0$ be as above.
Then for each  $ \alpha \ge \alpha_0$ there exists $t_{\alpha} < \delta$ such that 
$ \mu(K)  \le \mu_{\alpha}(K^{t_{\alpha}}) + t_{\alpha} \le \mu_{\alpha} (U) + \epsilon$. Then 
$$ \liminf \mu_{\alpha}(U) \ge  \mu(K) - \epsilon \ge \mu(U) - 2 \epsilon.$$
It follows that $ \liminf(U) \ge \mu(U)$. 

By Theorem \ref{AleksandrovLC}   $ \mu_{\alpha}  \Longrightarrow \mu$.
\end{proof}

Let family $\mathcal{M}  \subseteq \mathbf{TM}(X)$ be uniformly bounded in variation.
We consider the Kantorovich-Rubinstein metric $d_{\text{KR}}$ on  $\mathcal{M}$.
\begin{align} \label{KRmetric}
d_{\text{KR}}(\mu, \nu) &= \sup \{| \int_X f \, d\mu -  \int_X f \, d\nu|: f \in Lip_1(X,d) \cap C_c(X), \, \| f \| \le 1 \}  
\end{align}
where $Lip_1(X) = \{f:X \Longrightarrow \mathbb{R}: | f(x) - f(y) | \le d(x,y) \ \forall x,y \in X\} $. 

\begin{remark}
Our definition is related to the definition of the Kantorovich-Rubinstein metric
for Borel measures, which is obtained from the Kantorovich-Rubinstein norm
$$ \| \mu \|_{KR} = \sup \{ \int_X f \, d\mu: f \in Lip_1(X,d), \, \| f \| \le 1 \}.$$ 
This metric is sometimes is also called 
the Wasserstein metric $W(\mu, \nu)$, although there is no author with this name. See \cite[pp. 453-454, Comments to Ch.8]{Bogachev} 
for a good note on the history and use of this metric.

Our use of $f \in Lip_1(X,d) \cap C_c(X)$ in (\ref{KRmetric}) is dictated, on one hand, by relation to Kantorovich-Rubinstein metric
for Borel measures and, on the other hand, by the role of $C_c(X)$ in the theory of (p-conic) quasi-linear functionals.
Note that by \cite[Theorem 2]{Andreou} Lipschitz functions with compact support are dense in $C_0(X)$.   
\end{remark}   
  
\begin{lemma}
 $d_{\text{KR}}$ is a metric on a uniformly bounded in variation family $\mathcal{M}$.
\end{lemma}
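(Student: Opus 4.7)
The plan is to verify the four metric axioms in turn. First, finiteness: for each admissible test function $f$ the bound $|\int_X f\, d\mu| \le \|f\|\cdot \|\mu\| \le M$ from Remark \ref{RemBRT}(II)(d) gives $d_{\text{KR}}(\mu,\nu) \le 2M$, so this is precisely where the uniform bound on $\mathcal{M}$ is needed.

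The three easy axioms are immediate: symmetry from $|a-b| = |b-a|$; reflexivity $d_{\text{KR}}(\mu,\mu) = 0$ is clear; and the triangle inequality follows by applying the numerical triangle inequality to the integrand for each fixed admissible $f$ and then taking the supremum over $f$.

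The only substantive step is separation, namely $d_{\text{KR}}(\mu,\nu) = 0 \Longrightarrow \mu = \nu$. I would handle this in three stages. Stage (i): the hypothesis gives $\int_X f\, d\mu = \int_X f\, d\nu$ for every $f \in Lip_1(X,d) \cap C_c(X)$ with $\|f\|\le 1$. Stage (ii): I remove the size restriction using positive homogeneity of the quasi-integral (Remark \ref{RemBRT}(II)(a)); indeed, any Lipschitz $f \in C_c(X)$ can be scaled by a single positive constant $c = \max(L_f,\|f\|,1)$, where $L_f$ is its Lipschitz constant, so that $f/c$ lies in the test family, and $\mathcal{R}_{\mu}(cf)=c\,\mathcal{R}_{\mu}(f)$ restores the equality for $f$ itself. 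Stage (iii): invoking the Andreou density theorem cited just before the statement, Lipschitz functions with compact support are dense in $C_0(X)$; combining this with the uniform Lipschitz continuity $|\int f\,d\mu - \int g\,d\mu| \le 2\|f-g\|\,\|\mu\|$ from Remark \ref{LipQLF}, I push the equality from a dense set to every $f \in C_0(X)$, hence to every $f \in C_0^+(X)$. The bijective correspondence of Remark \ref{RemBRT} between finite deficient topological measures and p-conic quasi-linear functionals on $C_0^+(X)$ then forces $\mu = \nu$.

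The only real subtlety to watch out for is that the quasi-integral is not linear, so one cannot invoke the shortcut $|\int f\,d\mu - \int g\,d\mu| = |\int(f-g)\,d\mu|$; the factor-$2$ Lipschitz estimate of Remark \ref{LipQLF} is the right substitute and it is exactly what makes stage (iii) go through. Everything else is routine once that estimate is in hand.
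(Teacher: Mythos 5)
Your proof is correct and follows essentially the same route as the paper: approximate $f \in C_0(X)$ by a compactly supported Lipschitz function, use the Lipschitz estimate of Remark \ref{LipQLF} together with the uniform bound $M$ to transfer the equality of integrals to all of $C_0(X)$, and conclude $\mu = \nu$ from the correspondence in Remark \ref{RemBRT}. Your stage (ii) --- rescaling a general Lipschitz $g \in C_c(X)$ into the normalized test family via positive homogeneity --- is a detail the paper's proof passes over silently, so it is a welcome clarification rather than a deviation.
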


\begin{proof}
We shall show that $d_{\text{KR}}(\mu, \nu) = 0$ implies $\mu = \nu$; the remaining properties are obvious. 
Let $M$ be such that $ \| \mu \| \le M$ for each $\mu \in \mathcal{M}$.  
Take $f \in C_0(X)$. Given $ \epsilon >0$, choose a Lipschitz function $g$ with compact support so that $ \| f - g \| < \epsilon$. 
Since  $d_{\text{KR}}(\mu, \nu) = 0$, we see that $ | \int_X g \, d\mu - \int_X g \, d \nu |  = 0$. Using also Remark \ref{LipQLF}  we have:
\begin{align*} 
| &\int_X f \, d\mu - \int_X f \, d \nu |  \\
&\le | \int_X f \, d\mu - \int_X g \, d \mu |  +  | \int_X g \, d\mu - \int_X g \, d \nu |  +  
| \int_X g \, d \nu - \int_X f \, d \nu |  \\
&\le \| f - g \|  \mu(X)  + \| f - g \|  \nu(X)   \le 2 \epsilon M.
\end{align*}  
Thus, $\int_X f \, d\mu = \int_X f \, d \nu$ for every $f  \in C_0(X)$. By Remark \ref{RemBRT}  $ \mu = \nu$.
\end{proof}

\begin{theorem} \label{dKRwkconv}
Let  $X$ be a locally compact metric space. 
In either of the following situations:
\begin{enumerate}
\item
a family $\mathcal{M}  \subseteq \mathbf{TM}(X)$ is uniformly bounded in variation; 
\item
given $M >0$, a family $\mathcal{M}  \subseteq \mathbf{DTM}(X)$ is the family of deficient topological measures corresponding 
to functionals  $\mathcal{R}$ on $C_c^+(X)$ with $ \| \mathcal{R} \| \le M$;
\end{enumerate}
if a net $(\mu_{\alpha}) \in \mathcal{M} $, $\mu \subseteq  \mathcal{M} $, and $ d_{\text{KR}} ( \mu_{\alpha}, \mu) \rightarrow 0$, 
then $ \mu_{\alpha} \Longrightarrow \mu$. 
\end{theorem}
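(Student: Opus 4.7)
The strategy is to invoke Aleksandrov's Theorem \ref{AleksandrovLC}, which reduces weak convergence to the two inequalities $\liminf_\alpha \mu_\alpha(U) \ge \mu(U)$ for $U \in \mathscr{O}(X)$ and $\limsup_\alpha \mu_\alpha(K) \le \mu(K)$ for $K \in \mathscr{K}(X)$. The two cases (1) and (2) can be handled simultaneously, because the representation formulas of Remark \ref{RemBRT}(III) are available to every (deficient) topological measure, and positive homogeneity of the quasi-integral (Remark \ref{RemBRT}(II)) holds in both settings.

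Fix $K \in \mathscr{K}(X)$ and $\epsilon > 0$. Use outer regularity to pick $U \in \mathscr{O}(X)$ with $K \subseteq U$ and $\mu(U) < \mu(K) + \epsilon$, and Lemma \ref{easyLeLC} to pick $V \in \mathscr{O}(X)$ with $\overline{V}$ compact and $K \subseteq V \subseteq \overline{V} \subseteq U$. Choose $0 < \delta \le \min(1,\, d(K,\, X \setminus V))$ and set
\[
 f(x) \;=\; \max\bigl(0,\; 1 - d(x,K)/\delta\bigr).
\]
Then $1_K \le f \le 1$, $supp\, f \subseteq \overline{K^\delta} \subseteq \overline{V}$ so $f \in C_c(X)$, and the Lipschitz constant of $f$ is at most $1/\delta$. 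Consequently the rescaled function $g := \delta f$ lies in $Lip_1(X,d) \cap C_c(X)$ with $\|g\| \le \delta \le 1$, so the hypothesis $d_{\text{KR}}(\mu_\alpha,\mu) \to 0$ forces $\bigl|\int_X g\, d\mu_\alpha - \int_X g\, d\mu\bigr| \to 0$. By positive homogeneity, $\int_X g\, d\mu_\alpha = \delta\, \mathcal{R}_{\mu_\alpha}(f)$ and similarly for $\mu$, so $\mathcal{R}_{\mu_\alpha}(f) \to \mathcal{R}_\mu(f)$. The bounds $\mu_\alpha(K) \le \mathcal{R}_{\mu_\alpha}(f)$ and $\mathcal{R}_\mu(f) \le \mu(U)$ from Remark \ref{RemBRT}(III) now yield
\[
\limsup_\alpha \mu_\alpha(K) \;\le\; \mathcal{R}_\mu(f) \;\le\; \mu(U) \;<\; \mu(K)+\epsilon,
\]
and $\epsilon \to 0$ settles the compact inequality. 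For the open one, given $U \in \mathscr{O}(X)$ and $\epsilon > 0$, inner compact regularity produces $K \in \mathscr{K}(X)$ with $K \subseteq U$ and $\mu(K) > \mu(U)-\epsilon$; the same $f$ satisfies $\mathcal{R}_\mu(f) \ge \mu(K)$ and $\mathcal{R}_{\mu_\alpha}(f) \le \mu_\alpha(U)$, so $\liminf_\alpha \mu_\alpha(U) \ge \mu(U)-\epsilon$.

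The main subtlety is the balancing act in the bump construction: to apply $d_{\text{KR}}$ we need a test function in $Lip_1$ with sup norm at most $1$, but to sandwich $\mu_\alpha(K)$ via Remark \ref{RemBRT}(III) we need a plateau function that equals $1$ on $K$. These two requirements are reconciled by the rescaling $g = \delta f$, with the multiplicative factor $\delta^{-1}$ absorbed harmlessly by the hypothesis $d_{\text{KR}}(\mu_\alpha,\mu) \to 0$ and the positive homogeneity of $\mathcal{R}$. Since the argument invokes only Remark \ref{RemBRT}(III), positive homogeneity, and Lemma \ref{easyLeLC}, it applies verbatim in both cases (1) and (2).
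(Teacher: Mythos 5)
Your proof is correct, but it takes a genuinely different route from the paper's. The paper argues at the level of functionals: given an arbitrary $f \in C_0(X)$, it approximates $f$ within $\epsilon$ by a Lipschitz function $g$ of compact support (using the density result of \cite{Andreou}), controls $|\int_X g\,d\mu_{\alpha} - \int_X g\,d\mu|$ by $d_{\text{KR}}(\mu_{\alpha},\mu)\,\|g\|_{Lip}\,\|g\|$ via the same rescaling trick you use, and then absorbs the two approximation errors $\|f-g\|\,\mu_{\alpha}(X)$ and $\|f-g\|\,\mu(X)$ by the Lipschitz property of the quasi-integral (Remark \ref{LipQLF}); this last step is precisely where the uniform bound $M$ on variation is used. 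You instead verify condition \ref{portm3} of Theorem \ref{AleksandrovLC} directly with explicit rescaled distance-function bumps, exactly parallel to the paper's own proof of the Prokhorov-metric result (Theorem \ref{dPwkconv}). Your sandwich $\mu_{\alpha}(K) \le \mathcal{R}_{\mu_{\alpha}}(f)$, $\mathcal{R}_{\mu}(f) \le \mu(U)$, the containment $K^{\delta} \subseteq V$ forced by $\delta \le d(K, X\setminus V)$, and the identification of $\mathcal{R}_{\mu}$ with the functional $\rho$ in part \ref{mrDTM} of Remark \ref{RemBRT} (justified by $\mu = \mu_{\mathcal{R}}$) are all sound. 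What your approach buys is that the uniform bound in variation is never invoked: the implication ``$d_{\text{KR}}$-convergence implies weak convergence'' holds on any family of finite (deficient) topological measures on which $d_{\text{KR}}$ is defined, whereas the paper's argument genuinely needs $M$. What the paper's approach buys is the quantitative estimate $|\int_X f\,d\mu_{\alpha} - \int_X f\,d\mu| \le 2\epsilon M + \epsilon$ for every $f \in C_0(X)$, which is what is reused later to compare the $d_{\text{KR}}$-topology with the weak topology on compact $X$.
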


\begin{proof}
\begin{enumerate}
\item
Let  $f \in C_0(X)$. Given $ \epsilon >0$, choose a Lipschitz function with compact support $g$ so that $ \| f - g \| < \epsilon$. 
Since$ | \int_X g \, d \mu_{\alpha} - \int_X g, d \mu | \le d_{\text{KR}} ( \mu_{\alpha}, \mu) \,  \| g \|_{Lip} \,  \| g \|$, 
%so $  | \int_X g \, d \mu_{\alpha} - \int_X g, d \mu |   \longrightarrow 0$.
say,  $ | \int_X g \, d \mu_{\alpha} - \int_X g, d \mu | \le  \epsilon $ for all $ \alpha \ge \alpha_0$.  
Then for all $ \alpha \ge \alpha_0$ using Remark \ref{LipQLF} we have: 
\begin{align*} 
| &\int_X f \, d\mu_{\alpha} - \int_X f \, d \mu |  \\
&\le | \int_X f \, d\mu_{\alpha}- \int_X g \, d\mu_{\alpha}|  +  | \int_X g \, d\mu_{\alpha}- \int_X g \, d \mu |  +  
| \int_X g \, d\mu - \int_X f \, d \mu |  \\
&\le \| f - g \| \mu_{\alpha}(X) + \epsilon +  \| f - g \| \mu(X)  \le 2 \epsilon M + \epsilon,
\end{align*}  
so  $\int_X f \, d\mu_{\alpha} \longrightarrow  \int_X f, d\mu$. It follows that  $ \mu_{\alpha} \Longrightarrow \mu$.
\item
If a deficient topological measure corresponds to  $\mathcal{R}$  then  $ \| \mu \| \le M$. 
Thus, the family $\mathcal{M} $ is uniformly bounded in variation, and we may use the same argument as in previous part.
\end{enumerate}
\end{proof} 

\begin{theorem} 
Let $X$ be a compact metric space. Given $M >0$, let $ \mathcal{M} = \{ \mu \in \mathbf{DTM}(X):  \| \mu \| \le M \}$.
Then the topology on $ \mathcal{M}$ induced  by the metric $ d_{\text{KR}}$ is the weak topology.
\end{theorem}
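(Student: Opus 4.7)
The plan is to reduce the theorem to showing that weak convergence implies $d_{KR}$-convergence, and then invoke a compact-Hausdorff argument. By Theorem \ref{basicTP}, $\mathcal{M}$ is compact in the weak topology $\tau_w$. The metric $d_{KR}$ induces a Hausdorff topology $\tau_{KR}$, and by Theorem \ref{dKRwkconv}(2) the identity map $(\mathcal{M}, \tau_{KR}) \to (\mathcal{M}, \tau_w)$ is already continuous. If we also show that the identity in the opposite direction, $(\mathcal{M}, \tau_w) \to (\mathcal{M}, \tau_{KR})$, is continuous, then it is a continuous bijection from a compact space onto a Hausdorff space, hence automatically a homeomorphism, and $\tau_w = \tau_{KR}$.

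It therefore suffices to prove that whenever $\mu_\alpha \Rightarrow \mu$ in $\mathcal{M}$ one has $d_{KR}(\mu_\alpha, \mu) \to 0$. Set
$$ F = \{ f \in Lip_1(X,d) \cap C(X) : \|f\|_\infty \le 1 \}. $$
Since $X$ is compact, $C_c(X) = C(X)$. The family $F$ is equicontinuous (each element is $1$-Lipschitz) and uniformly bounded, so by Arzel\`a--Ascoli it is relatively compact in $(C(X), \|\cdot\|_\infty)$; being closed under uniform limits it is in fact compact. Given $\epsilon > 0$, fix a finite $\epsilon$-net $f_1, \ldots, f_n$ in $F$.

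Two auxiliary facts are needed. First, a Lipschitz estimate
$$ | \mathcal{R}_\nu(f) - \mathcal{R}_\nu(g) | \le \|f - g\|_\infty \, \nu(X) $$
holds for every $\nu \in \mathbf{DTM}(X)$ and every $f, g \in C(X)$. For nonnegative $f, g$ this is Remark \ref{LipQLF}. For signed $f, g$ one translates: set $c = \max(\|f\|_\infty, \|g\|_\infty)$ so $f + c, g + c \ge 0$, and use the identity $\mathcal{R}_\nu(h + c) = \mathcal{R}_\nu(h) + c\,\nu(X)$, which is an immediate instance of the cone additivity (p3) of Definition \ref{cqlf} applied to $h$ and the constant $1$, both of which lie in $A^+(h)$ (since $X$ is compact the $\phi(0) = 0$ constraint is not imposed, so constants belong to $A^+(h)$). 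The $c\,\nu(X)$ terms cancel under subtraction. Second, weak convergence, defined on $C_0^+(X) = C^+(X)$, extends to all of $C(X)$: for signed $f$, apply weak convergence to $f + \|f\|_\infty$ and to $1$, then subtract using the same translation identity, noting that $\mu_\alpha(X) = \mathcal{R}_{\mu_\alpha}(1) \to \mathcal{R}_\mu(1) = \mu(X)$.

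With these in hand, for any $f \in F$ pick $f_i$ with $\|f - f_i\|_\infty < \epsilon$ and estimate
$$ |\mathcal{R}_{\mu_\alpha}(f) - \mathcal{R}_\mu(f)| \le \epsilon M + |\mathcal{R}_{\mu_\alpha}(f_i) - \mathcal{R}_\mu(f_i)| + \epsilon M. $$
Choosing $\alpha$ large enough that $\max_i |\mathcal{R}_{\mu_\alpha}(f_i) - \mathcal{R}_\mu(f_i)| < \epsilon$ yields $d_{KR}(\mu_\alpha, \mu) \le (2M+1)\epsilon$ uniformly in $f \in F$, and the conclusion follows by letting $\epsilon \downarrow 0$. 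The only step that requires genuine care is the extension of the Lipschitz estimate from nonnegative to signed functions; once cone linearity is exploited for constants this is routine, and everything else is bookkeeping.
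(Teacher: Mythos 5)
Your proof is correct and follows essentially the same route as the paper: one direction is Theorem \ref{dKRwkconv}, and the converse uses Arzel\`a--Ascoli compactness of the unit Lipschitz ball together with the Lipschitz estimate of Remark \ref{LipQLF} in a finite-net argument (the paper delegates these details to the cited Proposition 1.10 of Dickstein--Zapolsky). Your explicit handling of signed test functions via the translation identity $\mathcal{R}(f+c)=\mathcal{R}(f)+c\,\mu(X)$ is a detail the paper glosses over, and it is handled correctly.
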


\begin{proof}
By Theorem \ref{dKRwkconv} if a net $(\mu_{\alpha})$ converges to $\mu$ in the metric $d_{\text{KR}}$ then it also converges to $\mu$ weakly. 
For $ \mathcal{M} = \{ \mu \in \mathbf{TM}(X):  \| \mu \| \le 1 \}$ and a slightly different metric 
the result was first shown in \cite[Proposition 1.10]{DickZap}, and our proof of 
Theorem \ref{dKRwkconv} follows the argument in that paper.
Because of Remark \ref{LipQLF} and the fact that the family of functions in (\ref{KRmetric}) is compact by the Arzela-Ascoli theorem, 
one can basically repeat an argument from \cite[Proposition 1.10]{DickZap} to show that the weak convergence of
 $(\mu_{\alpha})$ to $\mu$ implies convergence in the metric $d_{\text{KR}}$. 
 \end{proof}  
               
\section{Density theorems}  

\begin{definition} \label{properSDTM}
A deficient topological measure $\nu$  is called proper if from $m \le \nu $, 
where $m$ is a Radon measure it follows that $m = 0$.
\end{definition}

\begin{remark} \label{properDtm}
From \cite[Theorem 4.3]{Butler:Decomp} it follows that a finite deficient topological measure can be written as a sum of a finite Radon measure and 
a proper finite deficient topological measure. The sum of two proper deficient topological measures is proper (see \cite[Theorem 4.5]{Butler:Decomp}). 

A finite Radon measure on a compact space is a regular Borel measure, so our definition (which is given in \cite{Butler:Decomp})
of a proper deficient topological measure coincides with definitions in papers prior to \cite{Butler:Decomp}.
\end{remark}

In what follows, $p\mathbf{DTM}(X)$ and  $p\mathbf{TM}(X)$  denote, respectively, the family of proper finite deficient topological measures and the family 
of finite topological measures. 

Let $X$ be a locally compact non-compact space.
A set $A$ is called solid if $A$ is  connected, and $X \setminus A$ has only unbounded connected components.
%Let  $ \bosx$ denote, respectively,  
%the family of finite unions of disjoint compact connected sets, the family of compact connected sets,  the family of compact solid sets,
%and the family of bounded open solid sets.
When $X$ is compact, a set is called solid if it and its complement are both connected.
For a compact space $X$ we define a certain topological characteristic, genus. 
See \cite{Aarnes:ConstructionPaper} for more information about genus $g$ of the space. 
A compact space has genus 0 iff any finite union of disjoint closed solid sets has a connected complement.
%Another way to describe the ``$g=0$'' condition is the following: if the union of two open
%solid sets in $X$ is the whole space, their intersection must be connected. (See \cite{Grubb:IrrPart}.) 
Intuitively, $X$ does not have holes or loops.
In the case where $X$ is locally path connected, $g=0$ if the fundamental group $\pi_1(X)$ is finite (in particular, if $X$ is 
simply connected). Knudsen \cite{Knudsen} was able to show that if 
$H^1(X) = 0 $ then $g(X) = 0$, and in the case of CW-complexes the converse also holds.

\begin{remark} \label{notTM}
From Theorem \ref{DTMtoTM} it is easy to see that
if $ \mu, \nu$ are deficient topological measures, and $\nu$ is not a topological measure, then $ \mu + \nu$ is a deficient topological measure which is not a 
topological measure. 
\end{remark}

\begin{theorem} \label{messyTh}
\begin{enumerate}
\item \label{paforDTM}
(Proper simple deficient topological measures that are not topological measures are dense in the set of all point-masses) $\Longrightarrow$
($p\mathbf{DTM}(X) \setminus \mathbf{TM}(X)$ is dense in $\mathbf{M}(X)$)  $\Longleftrightarrow$ ($p\mathbf{DTM}(X) \setminus \mathbf{TM}(X)$ 
is dense in $\mathbf{DTM}(X) \setminus \mathbf{TM}(X)$) 
$\Longrightarrow$ ($p\mathbf{DTM}(X) $ is dense in $\mathbf{DTM}(X)$)  $\Longleftrightarrow$ ($p\mathbf{DTM}(X) $ is dense in $\mathbf{M}(X)$). 
\item \label{paforTM}
(Proper simple $\mathbf{TM}(X)$ are dense in the set of all point-masses)  $\Longrightarrow$ ($p\mathbf{TM}(X)$ is dense in $\mathbf{M}(X)$) $\Longleftrightarrow$ 
($p\mathbf{TM}(X)$ is dense in $\mathbf{TM}(X)$)
$\Longrightarrow$   ($p\mathbf{DTM}(X) $ is dense in $\mathbf{DTM}(X)$).
\end{enumerate}
\end{theorem}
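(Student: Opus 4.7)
The plan is to establish the chain of implications by exploiting four standing tools: \textbf{(T1)} finite convex combinations of point masses are weakly dense in $\mathbf{M}(X)$, a standard partition-and-sample argument applied to the finitely many test functions appearing in a given basic weak neighborhood; \textbf{(T2)} the decomposition $\nu = m+\nu'$ from Remark~\ref{properDtm}, writing every finite DTM as a Radon measure plus a proper DTM; \textbf{(T3)} additivity of the quasi-integral in the measure variable, $\mathcal{R}_{\mu_1+\mu_2}(f) = \mathcal{R}_{\mu_1}(f) + \mathcal{R}_{\mu_2}(f)$, which follows immediately from $R_{1,\mu_1+\mu_2,f} = R_{1,\mu_1,f} + R_{1,\mu_2,f}$ and the formula defining $\mathcal{R}$, so that adding a fixed DTM to a weakly convergent net preserves weak convergence; and \textbf{(T4)} closure of $\mathbf{DTM}(X)$, $\mathbf{TM}(X)$, and $p\mathbf{DTM}(X)$ under sums and nonnegative scalings (the last by Remark~\ref{properDtm}), combined with Remark~\ref{notTM}, which guarantees that any DTM containing a non-TM summand is itself not a TM.

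In Part~\ref{paforDTM}, labelling the five statements (A), (B), (C), (D), (E) in order, I would prove (A)$\Rightarrow$(B), (B)$\Leftrightarrow$(C), (C)$\Rightarrow$(D), and (D)$\Leftrightarrow$(E). For (A)$\Rightarrow$(B), apply (T1) to write $m \approx \sum c_i\delta_{x_i}$ weakly, then (A) to replace each $\delta_{x_i}$ by a nearby proper simple non-TM; by (T4) the convex combination lies in $p\mathbf{DTM}(X)\setminus\mathbf{TM}(X)$ and is close to $m$. For (B)$\Rightarrow$(C), decompose $\nu\in\mathbf{DTM}(X)\setminus\mathbf{TM}(X)$ via (T2); since $m$ is a TM and $\nu$ is not, $\nu'$ must be a proper non-TM; use (B) to approximate $m$ by $\mu_\alpha\in p\mathbf{DTM}(X)\setminus\mathbf{TM}(X)$ and then (T3)--(T4) to confirm that $\mu_\alpha+\nu'$ remains in that class and converges weakly to $\nu$. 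For (C)$\Rightarrow$(B), fix any $\xi\in p\mathbf{DTM}(X)\setminus\mathbf{TM}(X)$ (the equivalence is vacuous if this set is empty, modulo the triviality that then $\mathbf{DTM}(X)=\mathbf{TM}(X)$ as well); the net $m+t\xi$ lies in $\mathbf{DTM}(X)\setminus\mathbf{TM}(X)$ and tends to $m$ weakly as $t\downarrow 0$ by (T3), so a diagonal through the basic neighborhoods of Theorem~\ref{wkBase}, combining (C) applied along this net with the weak convergence, produces approximants of $m$. The implication (C)$\Rightarrow$(D) follows by decomposing an arbitrary $\nu\in\mathbf{DTM}(X)$ as $m+\nu'$, using (B) to approximate $m$ inside $p\mathbf{DTM}(X)$, and adding $\nu'$. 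Finally, (D)$\Leftrightarrow$(E) is immediate in one direction from $\mathbf{M}(X)\subseteq\mathbf{DTM}(X)$, and in the other from the same decomposition-and-add pattern.

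Part~\ref{paforTM} follows the same templates, but requires one extra observation: in the decomposition $\mu = m+\mu'$ of $\mu\in\mathbf{TM}(X)$, the proper part $\mu'$ is itself a topological measure. This holds because for disjoint $A,B$ with $A\sqcup B\in\mathscr{K}(X)\cup\mathscr{O}(X)$, subtraction gives $\mu'(A\sqcup B)=\mu(A\sqcup B)-m(A\sqcup B)=\mu'(A)+\mu'(B)$, while regularity is inherited from $\mu'\in\mathbf{DTM}(X)$. Granted this, (A')$\Rightarrow$(B') is verbatim (A)$\Rightarrow$(B) since convex combinations of proper simple TMs remain proper TMs; (B')$\Leftrightarrow$(C') uses decomposition-and-add with the easy direction coming from $\mathbf{M}(X)\subseteq\mathbf{TM}(X)$; and (C')$\Rightarrow$(D') decomposes $\nu\in\mathbf{DTM}(X)$ as $m+\nu'$, approximates $m\in\mathbf{TM}(X)$ by $\mu_\alpha\in p\mathbf{TM}(X)\subseteq p\mathbf{DTM}(X)$, and adds $\nu'$. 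The arguments are conceptually routine; the main obstacle is the bookkeeping of \emph{which} subclass (proper vs.\ non-TM, TM vs.\ DTM) each approximant lies in at each stage, together with the small but essential verification that the proper summand of a TM is a TM. Weak-convergence issues are uniformly dispatched by (T3) and diagonals through the basic neighborhoods of Theorem~\ref{wkBase}.
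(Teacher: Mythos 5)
Your proposal is correct and for the most part runs along the same lines as the paper's proof: both rest on approximating a measure by convex combinations of point masses, on the decomposition $\mu = m + \mu'$ of Remark \ref{properDtm}, and on the ``decompose, approximate the measure part, add back the proper part'' pattern, with Remark \ref{notTM} and closure of the proper class under sums doing the bookkeeping. The one place where you genuinely diverge is the implication ($p\mathbf{DTM}(X)\setminus\mathbf{TM}(X)$ dense in $\mathbf{DTM}(X)\setminus\mathbf{TM}(X)$) $\Rightarrow$ ($p\mathbf{DTM}(X)\setminus\mathbf{TM}(X)$ dense in $\mathbf{M}(X)$): the paper argues by contradiction, translating a putative bad neighborhood $N$ of $m$ by a fixed $\lambda\in\mathbf{DTM}(X)\setminus\mathbf{TM}(X)$ and claiming that $\lambda+N$ witnesses failure of the hypothesis at $\lambda+m$; you instead run the net $m+t\xi\to m$ (with $\xi$ proper and not a topological measure) through the basic neighborhoods of Theorem \ref{wkBase} and diagonalize. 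Your version is more direct and sidesteps the question of whether $\lambda+N$ actually contains a neighborhood of $\lambda+m$ relative to $\mathbf{DTM}(X)\setminus\mathbf{TM}(X)$, at the (shared) cost of needing $\mathbf{DTM}(X)\setminus\mathbf{TM}(X)\ne\emptyset$ --- a degeneracy both arguments quietly assume and which you at least flag. Two further small differences in your favor: you route the implication to ($p\mathbf{DTM}(X)$ dense in $\mathbf{DTM}(X)$) through the already-established density in $\mathbf{M}(X)$ rather than the paper's case split, and in Part 2 you explicitly verify that the proper summand of a topological measure is again a topological measure, a fact the paper subsumes under ``similar, but simpler.''
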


\begin{proof}
%In this proof we use two facts: ?????.
%\begin{enumerate}
%\item \label{fipa}
We shall prove the first part; the proof of the second part is similar, but simpler.
\begin{enumerate}[label=(\Alph{enumi}),ref=(\Alph{enumi})] 
\item \label{imA}
We shall show the first implication.
Any measure is approximated by convex combinations of point-masses, so by assumption, it is approximated by convex combinations of
proper simple deficient topological measures that are not topological measures. By Remark \ref{properDtm} and Remark \ref{notTM}
the latter combinations are in $p\mathbf{DTM}(X) \setminus \mathbf{TM}(X)$. 
\item \label{imB} 
($p\mathbf{DTM}(X) \setminus \mathbf{TM}(X)$ is dense in $\mathbf{M}(X)$)  $\Longrightarrow$ ($p\mathbf{DTM}(X) \setminus \mathbf{TM}(X)$ 
is dense in $\mathbf{DTM}(X) \setminus \mathbf{TM}(X)$):
Suppose $ \mu \in \mathbf{DTM}(X) \setminus \mathbf{TM}(X)$. By Remark \ref{properDtm}  write $ \mu = m + \mu'$, 
where $ \mu'$ is a proper deficient topological measure,
and $ m$ is a measure from $\mathbf{M}(X)$. 
By assumption, $m$ is approximated by $\nu  \in p\mathbf{DTM}(X) \setminus \mathbf{TM}(X)$. Then $\mu$ is approximated by $\nu + \mu'$,  where  
by Remark \ref{properDtm} and 
Remark \ref{notTM}   $\nu + \mu' $ is in $p\mathbf{DTM}(X) \setminus \mathbf{TM}(X)$.  
\item \label{imC}
($p\mathbf{DTM}(X) \setminus \mathbf{TM}(X)$ is dense in $\mathbf{DTM}(X) \setminus \mathbf{TM}(X)$) 
$\Longrightarrow$ ($p\mathbf{DTM}(X) \setminus \mathbf{TM}(X)$ 
is dense in $\mathbf{M}(X)$): 
Suppose to the contrary that there exists a measure $m \in \mathbf{M}(X) $ and its neighborhood $N$ which contains 
no elements of $p\mathbf{DTM}(X) \setminus p\mathbf{TM}(X)$.
Take $\lambda \in \mathbf{DTM}(X) \setminus \mathbf{TM}(X)$. Then for any deficient topological measure $ \nu \in N$  
we see that $ \lambda + \nu$ is a deficient topological measure 
that is not a topological measure and is not proper. Thus, a neighborhood  $\lambda + N \subseteq\mathbf{DTM}(X) \setminus \mathbf{TM}(X)$ contains no elements of 
$p\mathbf{DTM}(X) \setminus \mathbf{TM}(X)$, which contradicts the assumption. 
\item \label{imD}
($p\mathbf{DTM}(X) \setminus \mathbf{TM}(X)$ is dense in $\mathbf{DTM}(X) \setminus \mathbf{TM}(X)$) $\Longrightarrow$ ($p\mathbf{DTM}(X) $
is dense in $\mathbf{DTM}(X)$):
Let $\nu \in \mathbf{DTM}(X)$. If $ \nu \in \mathbf{DTM}(X) \setminus p\mathbf{TM}(X)$ then the statement follows from the assumption, and 
if  $ \nu \in \mathbf{DTM}(X) \cap p\mathbf{TM}(X)$ then the statement is obvious. 
\item \label{imE}
($p\mathbf{DTM}(X) $ is dense in $\mathbf{DTM}(X)$)  $\Longrightarrow$ ($p\mathbf{DTM}(X) $ is dense in $\mathbf{M}(X)$): 
obvious.
\item \label{imF}
($p\mathbf{DTM}(X) $ is dense in $\mathbf{M}(X)$) $\Longrightarrow$ ($p\mathbf{DTM}(X) $ is dense in $\mathbf{DTM}(X)$): follows from 
Remark \ref{properDtm} and Remark \ref{notTM} in a manner similar to the one in part \ref{imB}. 
\end{enumerate}
\end{proof}

\begin{theorem} \label{Kcden}
Suppose any open set in a locally compact space $X$ contains a compact connected subset that is not a singleton. 
Then $p\mathbf{DTM}(X)$ is dense in $\mathbf{DTM}(X)$.
\end{theorem}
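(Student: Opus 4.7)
The plan is to reduce to a density statement about point-masses and then exhibit the approximants using the simple ``concentrated on $D$'' deficient topological measure from the third example in the introduction. By part~\ref{paforDTM} of Theorem~\ref{messyTh}, it suffices to show that proper simple deficient topological measures which are not topological measures are dense in the set $\{\delta_x : x \in X\}$ in the weak topology. So I fix $x \in X$ and a basic weak neighborhood $N(\delta_x, f_1, \ldots, f_n, \epsilon)$ as in~(\ref{wstRho}). By continuity of the $f_i$, I pick an open neighborhood $U$ of $x$ on which each $f_i$ differs from $f_i(x)$ by less than $\epsilon$; by hypothesis, $U$ contains a compact connected set $D$ with at least two points.

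Define $\nu_D$ on $\mathscr{O}(X) \cup \mathscr{C}(X)$ by $\nu_D(A) = 1$ when $D \subseteq A$ and $\nu_D(A) = 0$ otherwise. The cited example records that $\nu_D$ is a deficient topological measure (connectedness of $D$ yields \ref{DTM1}, while Hausdorffness yields \ref{DTM2} and \ref{DTM3}) that is not a topological measure: taking $U' = X$ and $C = \{y\}$ for any $y \in D$, one has $\nu_D(U') = 1$ but $\nu_D(C) = \nu_D(U' \setminus C) = 0$ because $|D| \ge 2$, so the equality in Theorem~\ref{DTMtoTM} fails. Clearly $\nu_D$ is simple. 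Using the quasi-integral formula in Remark~\ref{RemBRT} together with the observation that $R_{1, \nu_D, f}(t) = 1$ for $t < \min_{y \in D} f(y)$ and $0$ otherwise, I obtain
\[
\int_X f \, d\nu_D = \min_{y \in D} f(y) \qquad \text{for every } f \in C_0^+(X).
\]
Since $D \subseteq U$, this gives $|\min_D f_i - f_i(x)| < \epsilon$ for every $i$, so $\nu_D$ lies in the prescribed neighborhood of $\delta_x$.

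It remains to verify that $\nu_D$ is proper. Let $m$ be a Radon measure with $m \le \nu_D$. For each $y \in X$ I produce an open neighborhood $V_y \ni y$ with $\nu_D(V_y) = 0$, which forces $m(V_y) = 0$. If $y \notin D$, take $V_y = X \setminus D$ (open since $D$ is compact, and $D \not\subseteq V_y$). If $y \in D$, choose $z \in D \setminus \{y\}$ and use Hausdorffness to pick disjoint open sets separating $y$ and $z$; the one containing $y$ excludes $z \in D$, so does not contain $D$, hence has $\nu_D$-measure $0$. By a covering argument, $m$ vanishes on every compact set; by inner regularity of $m$ on open sets, $m \equiv 0$, so $\nu_D$ is proper.

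The main technical point is the properness verification via the local Hausdorff-separation trick; the rest is bookkeeping against the cited example, the quasi-integral formula, and the implication chain in Theorem~\ref{messyTh}.
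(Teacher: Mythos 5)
Your proof is correct and follows essentially the same route as the paper: reduce via Theorem~\ref{messyTh} to approximating point-masses by the simple deficient topological measures concentrated on small non-singleton compact connected subsets of a neighborhood of the point. The only differences are cosmetic --- you check membership in a basic neighborhood by computing $\int_X f\,d\nu_D=\min_{y\in D} f(y)$ directly instead of invoking the open/compact criterion of Theorem~\ref{AleksandrovLC}, and you supply an explicit properness verification (the Hausdorff-separation covering argument) that the paper leaves to the cited example.
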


\begin{proof}
If we shall show that proper simple $\mathbf{DTM}(X) \setminus \mathbf{TM}(X)$ are dense in the set of point-masses, 
then the statement will follow from Theorem \ref{messyTh}.
Let $\delta_a$ be a point-mass at $a$. Let $ \{ V \in \mathscr{O}(X): a \in V\}$ be ordered by reverse inclusion. For each $V$, 
let $K_V \subseteq V$ be the non-singleton connected compact set.
Consider $ \lambda^V$  defined on $\mathscr{O}(X) \cup \mathscr{C}(X)$ as follows: $ \lambda^V (A) =1$ if $ K_V \subseteq A$ and  $ \lambda^V (A) =0$ otherwise. 
By \cite[Example 46]{Butler:DTMLC} $ \lambda^V$ is simple and $ \lambda^V \in \mathbf{DTM}(X) \setminus \mathbf{TM}(X)$.
If $ U \in \mathscr{O}(X) $ and $ \delta_a(U) = 1$, then $ a \in U$ and for all 
$ V \subseteq  U, V \in \mathscr{O}(X)$ we have $ K_V \subseteq U$, so $ \lambda^V (U) = 1$. Then $ \liminf  \lambda^V (U)  = 1  = \delta_a(U)$.
If $C \in \mathscr{K}(X)$ and $ \delta_a(C) = 0$, then $a \notin C$ and we may find $U \in \mathscr{O}(X) $ such that $a \in U, U \cap C = \emptyset$. 
Then for each $ V \subseteq U, V \in \mathscr{O}(X)$ we have $K_V \cap C = \emptyset$ and $\lambda^V(C) = 0$. Then $ \limsup \lambda^V(C)  = 0 = \delta_a(C)$. 
By Theorem  \ref{AleksandrovLC} the net $(\lambda^V)$ converges weakly to $ \delta_a$. 
\end{proof}

\begin{remark} \label{KcdenRe}
Among spaces that satisfy the condition of the previous theorem are: non-singleton locally compact spaces that are locally connected or
weakly locally connected; manifolds; CW complexes.
\end{remark}

\begin{theorem} \label{singDen}
Suppose $X$ is a non-singleton connected, locally connected, locally compact space with no cut points and 
such that the Aleksandrov one-point compactification of $X$ has genus $0$.
Then $p\mathbf{TM}(X)$ is dense in  $\mathbf{TM}(X)$, and  $p\mathbf{DTM}(X)$ is dense in  $\mathbf{DTM}(X)$.
\end{theorem}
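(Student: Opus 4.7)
The plan is to invoke the reduction embedded in Theorem \ref{messyTh}(\ref{paforTM}): once we show that proper simple topological measures are dense in the set of all point-masses $\{\delta_x : x \in X\}$, both conclusions ``$p\mathbf{TM}(X)$ is dense in $\mathbf{TM}(X)$'' and ``$p\mathbf{DTM}(X)$ is dense in $\mathbf{DTM}(X)$'' follow. So the whole argument reduces to constructing, for each $a \in X$, a net of proper simple topological measures converging weakly to $\delta_a$.

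Given $a \in X$, I would direct the family $\mathcal V$ of connected open neighborhoods $V$ of $a$ with compact closure by reverse inclusion (available by local connectedness and local compactness), and for each such $V$ pick three distinct points $p_1^V, p_2^V, p_3^V \in V$. Following Example \ref{nvssf} with $n=1$, define a solid-set function $\nu^V$ assigning $1$ to any bounded open solid or compact solid set containing at least two of the $p_i^V$ and $0$ otherwise. The hypotheses that $X$ is connected, locally connected, locally compact, has no cut points, and that the one-point compactification of $X$ has genus $0$ are precisely those needed for the extension theorem for solid-set functions (\cite[Theorem 48]{Butler:TMLCconstr}) to apply, producing a unique simple topological measure $\lambda^V$ that extends $\nu^V$. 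Since three disjoint compact solid sets each containing one $p_i^V$ individually carry $\nu^V$-value $0$ while a compact solid set containing all three carries value $1$, $\lambda^V$ fails to be subadditive, and hence is not a measure.

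I would next verify properness: if $m \le \lambda^V$ is a Radon measure, then for any $x \in X$ I can pick a compact solid neighborhood $\overline W$ of $x$ containing at most one of the $p_i^V$ (by Hausdorffness combined with the local connectedness and genus-$0$ hypotheses, which permit choosing small compact solid neighborhoods). Then $\lambda^V(\overline W) = 0$, hence $m(\overline W) = 0$; covering any compact set by finitely many such neighborhoods and using the inner regularity of $m$ on open sets forces $m(X) = 0$, so $\lambda^V \in p\mathbf{TM}(X)$.

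Finally I would establish weak convergence $\lambda^V \Longrightarrow \delta_a$ using part \ref{portm3} of Theorem \ref{AleksandrovLC}. For open $U \ni a$, take $V \in \mathcal V$ with $\overline V \subseteq U$; since $V$ is connected and locally connected, any two of $p_1^V, p_2^V, p_3^V$ lie in a compact connected subset of $V$, whose solid hull (formed by adjoining the bounded components of its complement, well-defined thanks to the no-cut-points and genus-$0$ hypotheses) is a compact solid subset of $U$ containing two of the points, so $\lambda^V(U) \ge 1 = \delta_a(U)$. For compact $K$ with $a \notin K$, a sufficiently small $V \in \mathcal V$ can be chosen disjoint from $K$, so no compact solid subset of any open neighborhood of $K$ chosen disjoint from $\overline V$ meets two of the $p_i^V$; by outer regularity this forces $\lambda^V(K) = 0 = \delta_a(K)$. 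Applying Theorem \ref{AleksandrovLC} yields $\lambda^V \Longrightarrow \delta_a$, completing the proof. The main technical obstacle is verifying that the solid-set function genuinely extends to a topological measure on the non-compact space $X$ and that the solid hulls within prescribed open sets can be arranged with the required point-containment properties; this is exactly why the full slate of hypotheses (genus $0$, no cut points, local connectedness) is imposed.
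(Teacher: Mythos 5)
Your overall strategy is the paper's: reduce via Theorem \ref{messyTh}(\ref{paforTM}) to approximating point-masses by proper simple topological measures, build those from the three-point solid-set function of Example \ref{nvssf} (with $n=1$) supported in a small connected neighborhood of $a$, and conclude by the Aleksandrov criterion (the paper instead verifies membership in the basic neighborhoods $W(\delta_a,U,C,\epsilon)$ of Theorem \ref{wkBase}, which amounts to the same thing). The construction and the convergence step are essentially sound, though in showing $\lambda^V(K)=0$ for compact $K$ not containing $a$ the clean mechanism is superadditivity --- $\lambda^V(K)+\lambda^V(\overline V)\le\lambda^V(X)=1$ with $\lambda^V(\overline V)=1$ --- rather than an appeal to outer regularity.

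The genuine gap is in your properness argument. You assert that every $x\in X$ has arbitrarily small compact \emph{solid} neighborhoods containing at most one of the three chosen points, ``by Hausdorffness combined with the local connectedness and genus-$0$ hypotheses.'' This is not justified: a small compact connected neighborhood $D$ of $x$ avoiding two of the points need not be solid, and passing to its solid hull adjoins the bounded components of $X\setminus D$, which may well contain those points; nothing in the hypotheses obviously prevents this, and the value of $\lambda^V$ on a non-solid compact set is not read off directly from the solid-set function in any case. The paper avoids this entirely by exploiting the no-cut-point hypothesis in the opposite direction: $X\setminus\{x\}$ is open and connected, so Lemma \ref{easyLeLC} provides a compact connected $B\subseteq X\setminus\{x\}$ containing at least two of the three points; then $\lambda(B)=1$, superadditivity gives $\lambda(\{x\})\le\lambda(X)-\lambda(B)=0$, and \cite[Lemma 4.12]{Butler:Decomp} (vanishing on all singletons implies properness here) finishes. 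You need either to prove your claim about small solid neighborhoods or to replace this step with an argument of the paper's type.
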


\begin{proof}
We shall give the proof for the case when $X$ is not compact. (When $X$ is compact the proof is similar but simpler; 
also, one may use  \cite[Theorem 4.9]{Svistula:Integrals}.)
We shall show that proper simple topological measures are dense in the set of simple measures, and the statements will follow from part (\ref{paforTM}) of
Theorem \ref{messyTh}.

Let $\delta_a$ be a point-mass. It is enough to show that a neighborhood of the form $ W(\delta_a, U, C, \epsilon) $ as in Theorem \ref{wkBase} contains a
simple proper topological measure. 

Suppose first $a \in U \in \mathscr{O}(X), a \notin C$. We may assume that $ U \cap C = \emptyset$. 
Since $a \in U \in \mathscr{O}(X)$, by Lemma \ref{easyLeLC}
there is a bounded open connected set $V$ and a compact connected set $D$ such that $ a \in V \subseteq D \subseteq U$.  
Since $X$ is connected and non-singleton, $ a \subsetneq V $, and we may 
choose 3 different points in $D$. Let $\lambda$ be a simple topological measure on $X$ given by \cite[Example 46]{Butler:TMLCconstr}, so 
$\lambda(A) = 1$ if a bounded solid set $A$ contains two or three of the chosen points, and  
$\lambda(A) = 0$ if a bounded solid set $A$ contains no more than one of the chosen points.
Since the solid hull of $D$ (a compact solid set) contains all three points, and each bounded component of $X \setminus D$ (a bounded open solid set) contains none of 
the three points, by \cite[Definition 41]{Butler:TMLCconstr} we compute $\lambda(D) = 1$. Then $ \lambda(U) = 1$. Since $C$ is disjoint from $U$, 
and $ \lambda(X) = 1$, by superadditivity we have $ \lambda(C) = 0$. Thus, $ \lambda \in W(\delta_a, U, C, \epsilon) $.

We shall show that $ \lambda$ is proper. Let $x \in X$. Since $X \setminus \{x\}$ is connected, by Lemma \ref{easyLeLC} there is 
a compact connected set $B \subseteq X \setminus \{x\} $ such that $B$ contains at least two of the three chosen points. Argument as above shows that $\lambda(B) = 1$. 
Then $ \lambda(\{x\})  \le \lambda(X \setminus B) = \lambda(X) - \lambda(B) =0$. Thus, $\lambda(\{x\}) =0$ for any $x \in X$, and by \cite[Lemma 4.12]{Butler:Decomp} $\lambda$ is proper.

The remaining three cases are easy. For example, if $ a \in U, a \in C$ then $\lambda$ as above will do.
\end{proof}

\begin{lemma} \label{inftsumD}
Suppose $X$ is locally compact,  $\sum_{i=1}^\infty \mu_i(X) < \infty$ where each $\mu_i$ is a deficient topological measure. 
Then $\mu = \sum_{i=1}^\infty \mu_i$ is a finite deficient topological measure. If each $\mu_i$ is a topological measure, then $\mu$ is a finite topological measure.
\end{lemma}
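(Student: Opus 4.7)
The finiteness of $\mu$ is immediate, since $\mu(X) = \sum_{i=1}^\infty \mu_i(X) < \infty$ by hypothesis, and monotonicity of each $\mu_i$ (a consequence of superadditivity from Remark~\ref{tausm}) gives $\mu_i(A) \le \mu_i(X)$ for every $A \in \mathscr{O}(X) \cup \mathscr{C}(X)$; hence $\mu(A)$ is well-defined and bounded by $\mu(X)$. I would then verify \ref{DTM1}, \ref{DTM2}, \ref{DTM3} in turn, and for the second assertion verify \ref{TM1}. Since all terms are nonnegative, rearrangements and interchange of sums in the ensuing calculations are legitimate.

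For \ref{DTM1}, if $C, K \in \mathscr{K}(X)$ are disjoint, finite additivity of each $\mu_i$ on compacts gives term-by-term
\[
\mu(C \sqcup K) = \sum_{i=1}^\infty \bigl(\mu_i(C) + \mu_i(K)\bigr) = \mu(C) + \mu(K),
\]
which is just absolute convergence plus rearrangement. Similarly for \ref{TM1}: if $A, B, A \sqcup B \in \mathscr{K}(X) \cup \mathscr{O}(X)$, then $\mu_i(A \sqcup B) = \mu_i(A) + \mu_i(B)$ by \ref{TM1} for each $\mu_i$, and summing yields the same identity for $\mu$.

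The substantive content is in the two regularity axioms, both handled by the same ``truncate and tail'' device. For \ref{DTM2}, fix $U \in \mathscr{O}(X)$ and $\varepsilon > 0$, and first choose $N$ so that $\sum_{i > N} \mu_i(X) < \varepsilon/2$. Applying inner compact regularity of each $\mu_i$ for $i \le N$, pick compact $K_i \subseteq U$ with $\mu_i(K_i) > \mu_i(U) - \varepsilon/(2N)$, and set $K = K_1 \cup \cdots \cup K_N$, which is compact and contained in $U$. Using monotonicity of each $\mu_i$ and that $\mu_i(K) \ge \mu_i(K_i)$, one obtains
\[
\mu(K) \ge \sum_{i=1}^N \mu_i(K_i) > \sum_{i=1}^N \mu_i(U) - \frac{\varepsilon}{2} \ge \mu(U) - \sum_{i>N}\mu_i(X) - \frac{\varepsilon}{2} > \mu(U) - \varepsilon.
\]
For \ref{DTM3}, fix $F \in \mathscr{C}(X)$ and $\varepsilon > 0$, pick the same $N$, use outer regularity of $\mu_i$ to find open $U_i \supseteq F$ with $\mu_i(U_i) < \mu_i(F) + \varepsilon/(2N)$ for $i \le N$, and set $U = U_1 \cap \cdots \cap U_N$, which is open and contains $F$. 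Then by monotonicity $\mu_i(U) \le \mu_i(U_i)$ for $i \le N$ and $\mu_i(U) \le \mu_i(X)$ otherwise, giving
\[
\mu(U) \le \sum_{i=1}^N \mu_i(U_i) + \sum_{i>N}\mu_i(X) < \mu(F) + \varepsilon.
\]

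The only real subtlety is the split between a controlled head and an arbitrarily small tail, which is what the hypothesis $\sum_i \mu_i(X) < \infty$ is designed to give. No further difficulty arises in the topological-measure case, since \ref{TM1} descends to $\mu$ directly from the $\mu_i$.
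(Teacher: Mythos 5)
Your proposal is correct and follows essentially the same route as the paper: both verify the additivity axioms term by term and establish the two regularity conditions by splitting off a finite head and making the tail $\sum_{i>N}\mu_i(X)$ small. The only cosmetic difference is that the paper invokes regularity of the finite partial sum $\lambda=\sum_{i=1}^{N}\mu_i$ as a single deficient topological measure, whereas you apply regularity to each $\mu_i$ separately and take the union (resp.\ intersection) of the witnessing compact (resp.\ open) sets --- which amounts to the same argument.
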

 
\begin{proof}
Let $\mu = \sum_{i=1}^\infty \mu_i$  on $ \mathscr{O}(X) \cup \mathscr{C}(X)$. It is easy to see that $\mu$ is finitely additive on compact sets. For $ \epsilon >0$ let $j$ be such that
 $\sum_{i=j+1}^\infty \mu_i(X) < \epsilon$, and let $\lambda = \sum_{i=1}^j \mu_i$. Then $\lambda $ is a finite deficient topological measure.
 For $U \in \mathscr{O}(X)$ there exists $K \in \mathscr{K}(X)$ such that $\lambda(U)< \lambda(K) + \epsilon$. Then $ \mu(U) < \lambda(U) + \epsilon< \lambda(K) + 2\epsilon <\mu(K) + 2\epsilon$, and the inner regularity
 of $\mu$ follows. Similarly, $\mu$ is outer regular. Thus, $\mu$ is a deficient topological measure; clearly, $\mu$ is finite.
 If each  $\mu_i$ is a topological measure, it is easy to check
 additivity of $ \mu$  on $\mathscr{O}(X) \cup \mathscr{K}(X)$, so condition \ref{TM1} of Definition \ref{TMLC} holds, and $\mu$ is a topological measure.
 \end{proof}

\begin{lemma} \label{infprsum}
Suppose $X$ is locally compact,  $\sum_{i=1}^\infty \mu_i(X) < \infty$ where each $\mu_i$ is a proper deficient topological measure 
(respectively, a proper topological measure).
Then $\mu = \sum_{i=1}^\infty \mu_i$ is a finite  
proper deficient topological measure (respectively, a  finite proper  topological measure).
\end{lemma}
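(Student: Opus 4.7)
The plan is to combine Lemma \ref{inftsumD}, the two-summand result from Remark \ref{properDtm}, and the Radon-proper decomposition \cite[Theorem 4.3]{Butler:Decomp} via a truncation argument. By Lemma \ref{inftsumD} the sum $\mu = \sum_{i=1}^\infty \mu_i$ is already a finite deficient topological measure (and a finite topological measure in the second case), so only properness of $\mu$ remains to be shown.

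For each $N$ I would split $\mu = \lambda_N + \rho_N$, where $\lambda_N = \sum_{i=1}^N \mu_i$ and $\rho_N = \sum_{i > N} \mu_i$ are finite deficient topological measures with $\rho_N(X) \to 0$. Iterating the two-summand result from Remark \ref{properDtm}, each partial sum $\lambda_N$ is proper. I would then apply the Radon-proper decomposition to $\rho_N$, writing $\rho_N = m_N + \rho_N'$ with $m_N$ a finite Radon measure and $\rho_N'$ a proper deficient topological measure; note $m_N(X) \le \rho_N(X) \to 0$. Substituting gives
\[
\mu \;=\; (\lambda_N + \rho_N') + m_N,
\]
in which $\lambda_N + \rho_N'$ is a sum of two proper deficient topological measures and hence proper. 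Independently decompose $\mu = m + \mu'$ with $m$ Radon and $\mu'$ proper, and invoke uniqueness of the Radon-proper decomposition to conclude $m = m_N$, so that $m(X) \le \rho_N(X) \to 0$ forces $m = 0$. Thus $\mu = \mu'$ is proper.

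The main obstacle is uniqueness of the Radon-proper decomposition, which I would justify by showing that $m$ is the largest Radon measure dominated by $\mu$. For any Radon $m^* \le \mu$, form the signed measure $\nu = m^* - m$ and consider its Jordan positive part $\nu^+$. Since $m^*$ and $m$ are both Radon, a direct check (using outer regularity of $m^*,m$ on Borel sets and inner regularity on open sets) shows that $\nu^+$ is itself a finite Radon measure. Approximating the positive Hahn set $K \cap P$ from inside by compact sets $C_n \subseteq K \cap P$ and using monotonicity of deficient topological measures gives $\nu(C_n) \le \mu'(C_n) \le \mu'(K)$, and since $\nu(C_n) \to \nu^+(K)$ we get $\nu^+(K) \le \mu'(K)$ for every compact $K$; inner/outer regularity of $\nu^+$ then propagates this to $\nu^+(A) \le \mu'(A)$ for every $A \in \mathscr{O}(X) \cup \mathscr{C}(X)$. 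Properness of $\mu'$ forces $\nu^+ = 0$, i.e.\ $m^* \le m$. Applied to both Radon-proper decompositions of $\mu$, this forces their Radon parts to coincide, which is the uniqueness needed above.
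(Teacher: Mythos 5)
Your proof is correct, but it reaches properness by a genuinely different route. The paper fixes one decomposition $\mu = m + \mu'$ and shows directly that $m(K)=0$ for every compact $K$: it invokes the covering characterization of properness from \cite[Theorem 4.4]{Butler:Decomp} for the proper partial sum $\mu^N=\sum_{i\le N}\mu_i$ (every compact set is a finite union of compact sets of total $\mu^N$-measure $<\epsilon$), then combines inner closed regularity of $m$, superadditivity of $\mu$, and the bound $\mu \le \mu^N + \epsilon$ on the tail to get $m(K)\le 3\epsilon$. You instead decompose the tail $\rho_N=m_N+\rho_N'$ and reduce everything to the assertion that the Radon part of a Radon--proper decomposition is unique, equivalently that it is the largest Radon measure dominated by $\mu$ --- a structural fact that the paper's Remark \ref{properDtm} does not state (it only gives existence), so you are right to supply a proof. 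Your maximality argument is sound: $\nu^+\le m^*$ makes $\nu^+$ a finite Radon measure, inner compact regularity of finite Radon measures on Borel sets lets you pass from $\nu(C_n)\le\mu'(C_n)$ on compacts inside the Hahn set to $\nu^+\le\mu'$ on $\mathscr{O}(X)\cup\mathscr{C}(X)$, and properness of $\mu'$ kills $\nu^+$. What each approach buys: yours isolates a reusable uniqueness/maximality lemma for the decomposition at the cost of Hahn--Jordan machinery and several regularity checks for signed Radon measures; the paper's is more elementary and self-contained given the covering criterion of \cite[Theorem 4.4]{Butler:Decomp}, but proves nothing beyond the statement at hand.
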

 
\begin{proof}
By Lemma \ref{inftsumD} $\mu$ is a finite deficient topological measure (respectively, a finite topological measure). We need to show that $\mu$ is proper.
By Remark \ref{properDtm} write $ \mu = m + \mu'$,
where $m$ is a finite Radon measure and $ \mu'$ is a proper deficient topological measure. We shall show that $ m=0$.

Let $K  \in \mathscr{K}(X)$.
For $ \epsilon >0$ let $N$ be such that
$\sum_{i=N+1}^\infty \mu_i(X) < \epsilon$, and let $\mu^N = \sum_{i=1}^N \mu_i$.  

By Remark \ref{properDtm} 
$\mu^N$ is a proper deficient topological measure.  By  \cite[Theorem 4.4]{Butler:Decomp}    
 there are compact sets $K_1, \ldots, K_n$ such that 
$K = \cup K_j$ and $ \sum_{j=1}^n \mu^N(K_j) < \epsilon$. 
Let  $E_1, \ldots, E_n $ be disjoint Borel sets such that $E_j \subseteq K_j$ and $ \bigsqcup_{j=1}^n E_i = \bigcup_{j=1}^n K_j$.
Since $m$ is finite,  outer regularity of $m$ is equivalent to inner closed regularity of $m$. Find disjoint sets $C_j, C_j  \subseteq E_j \subseteq K_j,  j=1, \ldots, n$ 
such that $C_j$ are closed (hence, compact) and 
$m(C_j) > m(E_j) - \frac{\epsilon}{n}$. Then
\begin{align*}
m&(K) = \sum_{j=1}^n m(E_j) \le \epsilon + \sum_{j=1}^n m(C_i) \le \epsilon +  \mu(C_1 \sqcup \ldots \sqcup C_n) \\
&\le \epsilon + \mu^N (C_1 \sqcup \ldots \sqcup C_n) + \epsilon =  2 \epsilon + \sum_{j=1}^n \mu^N(C_j ) \le 2 \epsilon + \sum_{j=1}^n \mu^N (K_i)  \le 3 \epsilon.
\end{align*}
It follows that $ m(K) = 0$ for any $ K \in \mathscr{K}(X)$.  Thus, $m=0$, and $\mu$ is proper.
\end{proof}

\begin{theorem}  \label{poXi}
Let $X$ be locally compact. Suppose $X = \bigcup_{i=1}^\infty X_i$, where each  $X_i $ is a compact subset of $X$. 
\begin{enumerate}
\item \label{poXi1}
If $p\mathbf{DTM}(X_i)$ is dense in $\mathbf{M}(X_i), \, i \in \mathbb{N}$ then  $p\mathbf{DTM} (X)$ is dense in $\mathbf{M} (X)$. 
\item \label{poXi2}
If $p\mathbf{TM} (X_i)$ is dense in $\mathbf{M}(X_i), \, i \in \mathbb{N}$ then  $p\mathbf{TM} (X)$ is dense in $\mathbf{M} (X)$. 
\end{enumerate}
\end{theorem}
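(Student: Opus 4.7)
The plan is to decompose $m$ along the covering $\{X_i\}$, approximate each piece using the density hypothesis on the corresponding $X_i$, extend each approximation to $X$, and sum via Lemma~\ref{infprsum}. Set $B_1=X_1$ and $B_i=X_i\setminus\bigcup_{j<i}X_j$ for $i\ge 2$, giving a Borel partition $X=\bigsqcup_i B_i$ with $B_i\subseteq X_i$ and $\sum_i m(B_i)=m(X)<\infty$. Define $\mu_i$ on the Borel subsets of $X_i$ by $\mu_i(B)=m(B\cap B_i)$. First I verify $\mu_i\in\mathbf{M}(X_i)$: finiteness and outer regularity pass through routinely from the corresponding properties of $m$, while inner regularity on an open $V\subseteq X_i$ reduces to showing that $m$ is inner compact regular on the Borel set $V\cap B_i$. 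This last fact follows from $\sigma$-compactness of $X$: choosing an exhausting compact $K_N$ with $m(X\setminus K_N)$ small, inner closed regularity of $m$ on $B\cap K_N$ (a standard consequence of outer regularity plus finiteness) produces a closed subset of $K_N$, which is automatically compact.

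Given a basic weak neighborhood $W(m,U_1,\ldots,U_n,C_1,\ldots,C_k,\epsilon)$ of $m$ as in Theorem~\ref{wkBase}, apply the density hypothesis to pick, for each $i$, $\nu_i\in p\mathbf{DTM}(X_i)$ in a weak neighborhood of $\mu_i$ tight enough that $|\nu_i(U_j\cap X_i)-\mu_i(U_j\cap X_i)|$ and $|\nu_i(C_j\cap X_i)-\mu_i(C_j\cap X_i)|$ are each below $\epsilon/2^{i+1}$ for every $j$, and also $\nu_i(X_i)<\mu_i(X_i)+\epsilon/2^{i+1}$ (by including $X_i$ itself among the compact sets defining the neighborhood of $\mu_i$). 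Extend by $\tilde\nu_i(A)=\nu_i(A\cap X_i)$ for $A\in\mathscr{O}(X)\cup\mathscr{C}(X)$. A direct check shows $\tilde\nu_i$ is a deficient topological measure on $X$: axioms DTM1 and DTM2 pass through because compact subsets of $X_i$ are exactly the compact subsets of $X$ contained in $X_i$; DTM3 requires, given $F$ closed in $X$ and an open $V\subseteq X_i$ with $F\cap X_i\subseteq V$, writing $V=V'\cap X_i$ and extending to the open set $V'\cup(X\setminus X_i)\subseteq X$, which contains $F$ and meets $X_i$ in $V$. In the TM case, TM1 transfers analogously. Properness of $\tilde\nu_i$ follows because any Radon $m_0\le\tilde\nu_i$ vanishes on compact sets disjoint from $X_i$, is therefore concentrated on $X_i$, and its restriction is a Radon measure on $X_i$ dominated by $\nu_i$, hence zero by properness of $\nu_i$.

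Finally, $\sum_i\tilde\nu_i(X)=\sum_i\nu_i(X_i)\le m(X)+\epsilon/2<\infty$, so Lemma~\ref{infprsum} yields $\nu:=\sum_i\tilde\nu_i\in p\mathbf{DTM}(X)$. Using countable additivity along $U_j=\bigsqcup_i(U_j\cap B_i)$ together with the identity $\mu_i(U_j\cap X_i)=m(U_j\cap B_i)$, one obtains $m(U_j)=\sum_i\mu_i(U_j\cap X_i)$ and $\nu(U_j)=\sum_i\nu_i(U_j\cap X_i)$, so $|\nu(U_j)-m(U_j)|\le\sum_i\epsilon/2^{i+1}=\epsilon/2<\epsilon$, and the same estimate holds on each $C_j$, placing $\nu$ in $W$. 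Part~(\ref{poXi2}) is handled identically with $p\mathbf{TM}$ replacing $p\mathbf{DTM}$ and Lemma~\ref{inftsumD} replacing Lemma~\ref{infprsum}. I expect the hardest step to be verifying that the extension $\tilde\nu_i$ is a deficient topological measure on $X$ (in particular axiom DTM3) and that it preserves properness; the remaining estimates are bookkeeping.
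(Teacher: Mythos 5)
Your proof follows essentially the same route as the paper's: partition $X$ into Borel pieces $B_i \subseteq X_i$, approximate each restricted measure $\mu_i$ by a proper (deficient) topological measure on $X_i$, extend by $A \mapsto \nu_i(A\cap X_i)$, and sum using Lemma~\ref{infprsum}; the details you fill in (that $\mu_i \in \mathbf{M}(X_i)$, that the extension satisfies the axioms, and that properness is preserved) are exactly the ones the paper leaves as ``easy to see'' or delegates to the covering criterion of \cite{Butler:Decomp}. Two cosmetic points: the weak neighborhoods of Theorem~\ref{wkBase} give only the one-sided bounds $\nu_i(U_j\cap X_i) > \mu_i(U_j\cap X_i) - \epsilon 2^{-i-1}$ and $\nu_i(C_j\cap X_i) < \mu_i(C_j\cap X_i) + \epsilon 2^{-i-1}$, not the two-sided estimates you assert (the one-sided ones are all that membership in $W$ requires), and for part~(\ref{poXi2}) you should still invoke Lemma~\ref{infprsum}, which explicitly covers proper topological measures, rather than Lemma~\ref{inftsumD}, which does not address properness.
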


\begin{proof}
Note that each $X_i$ is a locally compact space with respect to the subspace topology.
We  shall prove the first part.
Let $m \in \mathbf{M}(X)$.
We shall show that every neighborhood $W$ of $m$ as in Theorem \ref{wkBase} contains a proper deficient topological measure. 
To simplify notation, we consider $W(m, U, C, \epsilon)$ where $U \in \mathscr{O}(X), C \in \mathscr{K}(X), \epsilon >0$.
Take Borel subsets $Y_i$ of $X$ such that $ Y_i \subseteq X_i$ and $\bigsqcup_{i=1}^\infty Y_i = X$.  
Consider $m_i(B) = m(B \cap Y_i)$, where $B$ is a Borel set in $X_i$, $i \in \mathbb{N}$. 
It is easy to see that $m_i \in \mathbf{M}(X_i)$. 

Let $ \epsilon >0$. Let $U_i = U \cap X_i,  C_i = C \cap X_i,  \epsilon_ i  = \epsilon 2^{-i}$ for $ i \in \mathbb{N}$, 
so $U_i$ is open in $X_i$ and $ C_i$ is compact in $X_i$. 
By assumption, there is $\lambda_i \in p\mathbf{DTM}(X_i)$ such that 
$\lambda_i \in  W(m_i; U_i, C_i, X_i, \epsilon_i)$. 
Let $\nu_i$ be the extension of  $\lambda_i$ to $\mathscr{O}(X) \cup \mathscr{C}(X)$ given by 
$\nu_i(A) = \lambda_i(A \cap X_i)$ for $ A \in \mathscr{O}(X) \cup \mathscr{C}(X)$. 
It is easy to see that  $\nu_i $ is a deficient topological measure, and $ \nu_i(X) = \lambda_i(X_i) < \infty$.
Since $\lambda_i$ is proper, by  \cite[Theorem 4.4]{Butler:Decomp}
given $ \delta >0$ there are sets of the form $ V_j \cap X_i, V_j \in \mathscr{O}(X), j=1, \ldots, n$ 
such that  they cover $X_i$ and $\sum_{j=1}^n \lambda_i(V_j \cap X_i) < \delta$.
Then open sets $V_1, \ldots, V_n, X \setminus X_i$ cover $X$ and 
$\sum_{j=1}^n \nu_i (V_j) + \nu_i(X \setminus X_i) = \sum_{j=1}^n \lambda_i(V_j \cap X_i) < \delta$, and so $\nu_i$ is proper. 
Thus, $\nu_i \in  p\mathbf{DTM}(X)$ by  \cite[Theorem 4.4]{Butler:Decomp}.

 Since $ \sum_{i=1}^\infty \nu_i(X) = \sum_{i=1}^\infty \lambda_i(X_i) \le  \sum_{i=1}^\infty (m_i(X_i) + \epsilon_i) =  m(X) + \epsilon < \infty$, 
by Lemma \ref{infprsum}
$\nu = \sum_{i=1}^\infty \nu_i$ is a  finite proper deficient topological measure. 
We have:
$$ \nu(U)  = \sum_{i=1}^\infty \nu_i(U) =  \sum_{i=1}^\infty \lambda_i(U \cap X_i) >\sum_{i=1}^\infty (m_i(U \cap X_i)  - \epsilon_i) = m(U) - \epsilon, $$
$$ \nu(C) =  \sum_{i=1}^\infty \nu_i(C) =  \sum_{i=1}^\infty \lambda_i(C \cap X_i) < \sum_{i=1}^\infty (m_i(C \cap X_i)  + \epsilon_i) = m(C) + \epsilon. $$ 
Thus,  $\nu \in W(m, U, C, \epsilon)$.

The proof of the second part is the same, taking into account that $\lambda_i, \nu_i, \nu$ are proper topological measures.
\end{proof}

\begin{corollary} \label{poxiCor}
Let $X = \cup_{i=1}^\infty X_i$, where each $ X_i $ as in Theorem \ref{singDen}. 
Then $p\mathbf{TM}(X) $ is dense in $\mathbf{TM}(X)$, and $p\mathbf{DTM}(X) $ is dense in $\mathbf{DTM}(X)$.
\end{corollary}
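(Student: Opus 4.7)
The plan is to reduce the corollary to a routine chaining of three results already in hand: Theorem \ref{singDen}, Theorem \ref{messyTh}, and Theorem \ref{poXi}. The difficulty is not in finding new ideas but in correctly sequencing the already-established equivalences, and in verifying the hypotheses needed to invoke Theorem \ref{poXi} (which requires the ``$X_i$ compact'' setup) together with the equivalences of Theorem \ref{messyTh} (which require switching between density in $\mathbf{M}$ and density in $\mathbf{TM}$ or $\mathbf{DTM}$).

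First I would handle the pieces on each factor space. Each $X_i$ is compact and satisfies the hypotheses of Theorem \ref{singDen}, so applying that theorem locally yields
\begin{align*}
p\mathbf{TM}(X_i) \ \text{is dense in}\ \mathbf{TM}(X_i), \qquad p\mathbf{DTM}(X_i) \ \text{is dense in}\ \mathbf{DTM}(X_i).
\end{align*}
Next, invoking the right-hand equivalences of Theorem \ref{messyTh} on each $X_i$ (applied to $X_i$ in place of $X$), I translate these statements into density inside the measure spaces:
\begin{align*}
p\mathbf{TM}(X_i) \ \text{is dense in}\ \mathbf{M}(X_i), \qquad p\mathbf{DTM}(X_i) \ \text{is dense in}\ \mathbf{M}(X_i).
\end{align*}

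Having secured density in $\mathbf{M}(X_i)$ on every compact piece, I would then apply Theorem \ref{poXi} (parts \ref{poXi1} and \ref{poXi2}) to the decomposition $X = \bigcup_{i=1}^\infty X_i$ to conclude
\begin{align*}
p\mathbf{TM}(X) \ \text{is dense in}\ \mathbf{M}(X), \qquad p\mathbf{DTM}(X) \ \text{is dense in}\ \mathbf{M}(X).
\end{align*}
Finally, running the equivalences of Theorem \ref{messyTh} in the reverse direction, now on $X$ itself, I upgrade density in $\mathbf{M}(X)$ to density in $\mathbf{TM}(X)$ and $\mathbf{DTM}(X)$, which is precisely the conclusion of the corollary.

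The main obstacle, such as it is, is bookkeeping: making sure the hypotheses line up (each $X_i$ is compact, carries the subspace topology, and inherits the connectedness/no-cut-point/genus-zero conditions used in Theorem \ref{singDen}) and that Theorem \ref{messyTh} is applied in the correct direction at each stage. No new construction or estimate is needed beyond these verifications.
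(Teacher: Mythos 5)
Your proposal is correct and follows essentially the same route as the paper: obtain the local density from Theorem \ref{singDen}, pass to density in $\mathbf{M}(X_i)$, glue via Theorem \ref{poXi}, and translate back with the equivalences of Theorem \ref{messyTh}. The only (immaterial) difference is that you treat the $\mathbf{DTM}$ case separately through part \ref{poXi1}, whereas the paper deduces it from the $\mathbf{TM}$ case using the final implication in part \ref{paforTM} of Theorem \ref{messyTh}.
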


\begin{proof}
By part \ref{paforTM} of Theorem \ref{messyTh}  it is enough to show that $p\mathbf{TM}(X) $ is dense in $\mathbf{M}(X)$. By Theorem \ref{singDen}, 
$p\mathbf{TM}(X_i) $ is dense in $\mathbf{M}(X_i)$ for each $i$, and we apply part \ref{poXi2} of Theorem \ref{poXi}.
\end{proof} 

\begin{remark}
In Corollary \ref{poxiCor} one may take, for example, a compact n-manifold, $n \ge 2$ as $X$, 
or $X$ that is covered by countably many sets homeomorphic to balls $B^n$ with varying $n \ge 2$.
\end{remark}

\begin{lemma}
$\mathbf{TM}(X) $ is a closed subset of $\mathbf{DTM}(X)$, and $\mathbf{M}(X) $ is a closed subset of $\mathbf{DTM}(X)$.
\end{lemma}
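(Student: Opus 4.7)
The plan is to exploit the functional correspondence of Remark \ref{RemBRT}: $\mathbf{DTM}(X)$ is in bijection with bounded p-conic quasi-linear functionals on $C_0^+(X)$, and under this bijection $\mathbf{TM}(X)$ corresponds to those functionals that extend to quasi-linear functionals on $C_0(X)$, while $\mathbf{M}(X)$ corresponds to those that extend further to linear functionals. Weak convergence $\mu_\alpha \Longrightarrow \mu$ is, by definition, pointwise convergence of $\mathcal{R}_{\mu_\alpha}$ to $\mathcal{R}_\mu$ on $C_0^+(X)$, so both closedness claims reduce to passing the relevant (quasi-)linear identities to the limit.

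For $\mathbf{TM}(X)$, let $(\mu_\alpha) \subseteq \mathbf{TM}(X)$ converge weakly to $\mu \in \mathbf{DTM}(X)$. Given $g \in C_0(X)$, both $g^+ = \max(g,0)$ and $-g^- = \min(g,0)$ lie in $B(g)$, so quasi-linearity of each $\mathcal{R}_{\mu_\alpha}$ on $B(g)$ together with real-homogeneity gives $\mathcal{R}_{\mu_\alpha}(g) = \mathcal{R}_{\mu_\alpha}(g^+) - \mathcal{R}_{\mu_\alpha}(g^-)$. Since $g^{\pm} \in C_0^+(X)$, weak convergence yields $\mathcal{R}_{\mu_\alpha}(g) \to \tilde{\rho}(g) := \mathcal{R}_\mu(g^+) - \mathcal{R}_\mu(g^-)$ for every $g \in C_0(X)$. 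The defining identities of quasi-linearity—additivity on $B(f)$ for each $f \in C_0(X)$, $\mathcal{R}(cf) = c\mathcal{R}(f)$ for $c \in \mathbb{R}$, and positivity—all pass to this pointwise limit, so $\tilde{\rho}$ is a quasi-linear functional on $C_0(X)$ of finite norm. By Remark \ref{RemBRT}, $\tilde{\rho}$ is the quasi-integral of a unique topological measure $\mu'$. Because $\tilde{\rho}$ and $\mathcal{R}_\mu$ agree on $C_0^+(X)$, the recovery formulas in part \ref{mrDTM} of Remark \ref{RemBRT} extract the same set function from each, forcing $\mu' = \mu$; hence $\mu \in \mathbf{TM}(X)$.

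For $\mathbf{M}(X)$ the argument is simpler. If $(\mu_\alpha) \subseteq \mathbf{M}(X)$ converges weakly to $\mu$, then each $\mathcal{R}_{\mu_\alpha}$ is linear on $C_0(X)$, so $\mathcal{R}_{\mu_\alpha}(f + g) = \mathcal{R}_{\mu_\alpha}(f) + \mathcal{R}_{\mu_\alpha}(g)$ for $f, g \in C_0^+(X)$. Passing to the limit, $\mathcal{R}_\mu$ is additive on $C_0^+(X)$; combined with positive homogeneity it extends to a linear functional on $C_0(X)$, and Remark \ref{RemBRT} (equivalently Theorem \ref{subaddit}(c)) gives $\mu \in \mathbf{M}(X)$.

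The main obstacle is conceptual rather than computational: weak convergence is assumed only on $C_0^+(X)$, while the property distinguishing $\mathbf{TM}(X)$ inside $\mathbf{DTM}(X)$ is quasi-linearity on the larger space $C_0(X)$ and necessarily involves sign-changing functions. The splitting identity $\mathcal{R}_{\mu_\alpha}(g) = \mathcal{R}_{\mu_\alpha}(g^+) - \mathcal{R}_{\mu_\alpha}(g^-)$, which holds for topological measures but typically fails for general deficient topological measures, is exactly the bridge that lets us transport convergence from $C_0^+(X)$ to $C_0(X)$ and then transfer quasi-linearity to the limit.
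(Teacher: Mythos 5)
Your proposal is correct and follows essentially the same route as the paper: the paper's own proof characterizes $\mathbf{TM}(X)$ and $\mathbf{M}(X)$ inside $\mathbf{DTM}(X)$ via the functional $\rho(f) = \mathcal{R}_{\mu}(f^+) - \mathcal{R}_{\mu}(f^-)$ from Remark \ref{RemBRT} and then declares the closedness "easy to check" from the basic weak neighborhoods. Your splitting identity $\mathcal{R}_{\mu_\alpha}(g) = \mathcal{R}_{\mu_\alpha}(g^+) - \mathcal{R}_{\mu_\alpha}(g^-)$ and the passage of the (quasi-)linearity identities to the pointwise limit are exactly the details the paper omits.
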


\begin{proof}
By Remark \ref{RemBRT}
$  \mu \in  \mathbf{TM}(X) $ iff $ \rho$ is a quasi-linear functional on $C_0(X)$, and  $\mu \in \mathbf{M}(X)$ iff $\rho$ is a linear functional on $C_0(X)$,
where $ \rho(f) =  \mathcal{R}_{\mu} (f^+) -  \mathcal{R}_{\mu} (f^-) $. Using basic open sets in Definition \ref{defwk} it is easy to check that 
$\mathbf{TM}(X) $ is a closed subset of $\mathbf{DTM}(X)$, and $\mathbf{M}(X) $ is a closed subset of $\mathbf{DTM}(X)$.
\end{proof} 

\begin{theorem} \label{densEq}
Suppose $X$ is locally compact.
The following are equivalent:
\begin{enumerate}
\item \label{densEq1}
$\mathbf{M}(X) $ is nowhere dense in $\mathbf{DTM}(X)$ (or in $\mathbf{TM}(X)$).
\item \label{densEq2}
There exists a finite deficient topological measure (respectively, a finite topological measure) that is not a measure.
\item \label{densEq3}
There exists a nonzero  finite proper deficient topological measure (respectively, nonzero finite proper  topological measure).
\end{enumerate}
\end{theorem}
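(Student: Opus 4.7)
The plan is to prove the cycle (\ref{densEq1}) $\Rightarrow$ (\ref{densEq2}) $\Rightarrow$ (\ref{densEq3}) $\Rightarrow$ (\ref{densEq1}), handling the deficient topological measure version and the topological measure version in parallel; the arguments only differ in a routine verification that each object constructed lies in the right class. The central tool throughout is the decomposition recalled in Remark \ref{properDtm}: every finite DTM $\nu$ splits as $\nu = m + \nu'$ with $m \in \mathbf{M}(X)$ Radon and $\nu'$ a proper finite DTM.

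For (\ref{densEq1}) $\Rightarrow$ (\ref{densEq2}), I would argue contrapositively: if no such non-measure exists, then $\mathbf{M}(X) = \mathbf{DTM}(X)$ (or $=\mathbf{TM}(X)$), which has nonempty interior in itself, violating nowhere-density. For (\ref{densEq2}) $\Rightarrow$ (\ref{densEq3}), take the non-measure $\nu$ guaranteed by (\ref{densEq2}) and apply the decomposition to get $\nu = m + \nu'$; if $\nu' = 0$ then $\nu = m$ would be a measure, so $\nu'$ is the desired nonzero proper DTM. For the TM case, I would additionally verify that when $\nu \in \mathbf{TM}(X)$ and $m \in \mathbf{M}(X) \subseteq \mathbf{TM}(X)$, the proper part $\nu' = \nu - m$ defined on $\mathscr{O}(X) \cup \mathscr{C}(X)$ still satisfies \ref{TM1} by subtracting the corresponding equalities for $\nu$ and $m$.

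The substantive step is (\ref{densEq3}) $\Rightarrow$ (\ref{densEq1}). Fix $\mu \in \mathbf{M}(X)$ and a basic neighborhood $W(\mu, U_1, \ldots, U_n, C_1, \ldots, C_m, \epsilon)$ as in Theorem \ref{wkBase}. Given the nonzero proper DTM (resp.\ TM) $\nu'$ from (\ref{densEq3}), consider the perturbations $\mu_t := \mu + t \nu'$ for $t \in (0, 1]$. These are again finite DTMs (and TMs in the TM setting, since the sum of two TMs remains additive on disjoint $\mathscr{O}(X) \cup \mathscr{K}(X)$ pairs). Since $\int f \, d\mu_t = \int f \, d\mu + t \int f \, d\nu' \to \int f \, d\mu$ as $t \to 0^+$ for each $f \in C_0^+(X)$, we have $\mu_t \Longrightarrow \mu$, so $\mu_t \in W$ for all sufficiently small $t$.

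The hard part is to show $\mu_t \notin \mathbf{M}(X)$ for every such $t$. Suppose for contradiction $\mu_t = \tilde m$ with $\tilde m \in \mathbf{M}(X)$, and let $\widetilde{M}, M$ be the ambient finite Radon measures whose restrictions to $\mathscr{O}(X) \cup \mathscr{C}(X)$ are $\tilde m$ and $\mu$. Then $\widetilde M - M$ is a finite signed Borel measure that agrees with $t \nu' \ge 0$ on all open sets; by the outer regularity of both $\widetilde M$ and $M$ this nonnegativity propagates to all Borel sets, so $\widetilde M - M$ is a finite Radon measure, and its restriction to $\mathscr{O}(X) \cup \mathscr{C}(X)$ equals $t \nu'$. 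Hence $t \nu' \in \mathbf{M}(X)$. Since $0 < t \le 1$ gives $t \nu' \le \nu'$ pointwise on $\mathscr{O}(X) \cup \mathscr{C}(X)$, the properness of $\nu'$ (Definition \ref{properSDTM}) forces $t \nu' = 0$, contradicting $\nu' \neq 0$. The main obstacle is precisely this step, which hinges on extracting a bona fide Radon measure from the difference $\widetilde M - M$ using the regularity hypotheses built into $\mathbf{M}(X)$ in order to invoke properness.
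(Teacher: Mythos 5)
Your proposal is correct and follows essentially the same route as the paper: the decomposition of Remark \ref{properDtm} for (\ref{densEq2})$\Rightarrow$(\ref{densEq3}), and for (\ref{densEq3})$\Rightarrow$(\ref{densEq1}) a perturbation of a measure by a small multiple of the nonzero proper (deficient) topological measure, shown to converge weakly to the measure while never being a measure itself by properness (the paper uses the convex combinations $\tfrac1n\tfrac{1}{\nu(X)}\nu + (1-\tfrac1n)m$ and simply asserts these are not measures, whereas you supply the regularity argument justifying that assertion). The one step you leave implicit is the passage from ``$\mathbf{M}(X)$ has empty interior'' to ``$\mathbf{M}(X)$ is nowhere dense,'' which requires the fact that $\mathbf{M}(X)$ is closed in $\mathbf{DTM}(X)$ (the lemma immediately preceding the theorem).
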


\begin{proof}
(\ref{densEq1}) $\Longrightarrow$ (\ref{densEq2}) is obvious. (\ref{densEq2})  $\Longrightarrow$ (\ref{densEq3}): 
Let $\mu$ be a deficient topological measure that is not a measure.
By Remark \ref{properDtm} write $ \mu = m + \mu'$ where $m$ is a measure and $\mu' $ is a proper deficient topological measure. Then $ \mu' \neq 0$.
(\ref{densEq3})  $\Longrightarrow$ (\ref{densEq1}): Suppose $\nu \neq 0$ is a proper finite deficient topological measure. Let $ m \in \mathbf{M}(X) $. 
Consider  a set functions $\mu_n$ on $\mathscr{O}(X) \cup \mathscr{C}(X)$ given by 
$$\mu_n(A) = \frac1n \frac{1}{\nu(X)} \nu(A) + (1 - \frac1n) m(A).$$
Then each $\mu_n$ is a deficient topological measure that is not a measure, and  $ \mu_n \Longrightarrow m$  by Theorem \ref{AleksandrovLC}.
Thus, $\mathbf{DTM} (X) \setminus \mathbf{M}(X)$ is dense in $ \mathbf{M}(X)$, and since   $ \mathbf{M}(X)$  is a closed subset of   $\mathbf{DTM}(X)$, 
we see that $ \mathbf{M}(X)$ is nowhere dense in  $\mathbf{DTM}(X)$. 
The proof for topological measures is similar.
\end{proof}

\begin{corollary} \label{ExDensi} 
Suppose $X$ is locally compact.
If $X$ contains a non-singleton compact connected set, then $\mathbf{M}(X) $ is nowhere dense in $\mathbf{DTM}(X)$.
If $X$ contains an open (or closed) locally connected, connected, non-singleton subset whose Aleksandrov one-point compactification has genus $0$ 
then $\mathbf{M}(X) $ is nowhere dense in $\mathbf{TM}(X)$.
\end{corollary}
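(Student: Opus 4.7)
The plan is to apply Theorem \ref{densEq}: each assertion is equivalent to the existence of a finite deficient topological measure (respectively, topological measure) on $X$ that is not a measure, so it suffices in each case to exhibit such a set function.

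For the first assertion, let $D\subseteq X$ be a non-singleton compact connected set and define $\nu$ on $\mathscr{O}(X)\cup\mathscr{C}(X)$ by $\nu(A)=1$ if $D\subseteq A$ and $\nu(A)=0$ otherwise. By the example stated just before Definition \ref{defwk}, $\nu$ is a finite deficient topological measure but not a topological measure, hence not a measure, and Theorem \ref{densEq} gives the first conclusion.

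For the second assertion, let $Y\subseteq X$ be the open or closed subset supplied by the hypothesis. In either case $Y$ is locally compact in the subspace topology, non-singleton, connected, locally connected, and $\hat Y$ has genus $0$. On such a space Aarnes's solid-set-function construction (for instance the $2n+1$-point prescription of Example \ref{nvssf}, carried out inside $Y$) yields a finite topological measure $\lambda$ on $Y$ that fails subadditivity on a suitable pair of compact solid subsets of $Y$, so $\lambda$ is not a measure by Theorem \ref{subaddit}.

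To transfer $\lambda$ to $X$, define $\tilde\lambda(A):=\lambda(A\cap Y)$ for $A\in\mathscr{O}(X)\cup\mathscr{C}(X)$; in both cases ($Y$ open or closed) the intersection $A\cap Y$ belongs to $\mathscr{O}(Y)\cup\mathscr{C}(Y)$. Finite disjoint additivity on compacts (TM1) and inner compact regularity (TM2) for $\tilde\lambda$ follow routinely from the corresponding axioms for $\lambda$, since a compact subset of $Y$ is still compact as a subset of $X$ and vice versa when it is contained in $Y$. The main technical hurdle is outer regularity (TM3) of $\tilde\lambda$ on $\mathscr{C}(X)$: when $Y$ is closed in $X$ one simply adjoins the open set $X\setminus Y$ to an outer $\lambda$-approximation of $F\cap Y$ inside $Y$; when $Y$ is open the set $F\setminus Y$ must be covered by open neighborhoods in $X$ chosen, using local compactness and outer regularity of $\lambda$ in $Y$, so that their intersections with $Y$ contribute at most $\epsilon$ to $\lambda$. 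Once $\tilde\lambda\in\mathbf{TM}(X)$ is established, the failure of subadditivity is inherited from $\lambda$ on the same pair of compact sets, so $\tilde\lambda$ is not a measure, and Theorem \ref{densEq} supplies the conclusion that $\mathbf{M}(X)$ is nowhere dense in $\mathbf{TM}(X)$.
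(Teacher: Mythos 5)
Your treatment of the first assertion is exactly the paper's: both reduce the claim to part (\ref{densEq2}) of Theorem \ref{densEq} and exhibit the deficient topological measure $\nu(A)=1$ iff $D\subseteq A$ for a non-singleton compact connected $D$ (the example recalled in Section 1, i.e.\ \cite[Example 46]{Butler:DTMLC}), which is finite and not a topological measure, hence not a measure. For the second assertion the paper is equally terse --- it simply cites \cite[Example 61]{Butler:TMLCconstr} as a finite topological measure that is not a measure --- so your plan of running the solid-set-function construction on the subset $Y$ and transferring the result to $X$ is in the spirit of what the paper intends, and the reduction via Theorem \ref{densEq} is correct.

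The gap is in the transfer $\tilde\lambda(A)=\lambda(A\cap Y)$ when $Y$ is open. A first, smaller issue: for $A\in\mathscr{K}(X)$ not contained in $Y$, the trace $A\cap Y$ is only closed in $Y$, not compact, so \ref{TM1} for $\tilde\lambda$ does not ``follow routinely'' from \ref{TM1} for $\lambda$; additivity of a topological measure over a disjoint union of two non-compact closed sets is not among its axioms (Remark \ref{tausm} guarantees even superadditivity only when at most one closed piece is non-compact). The more serious issue is your sketch of \ref{TM3}: covering $F\cap Y$ by an open $V\subseteq Y$ with $\lambda(V)<\lambda(F\cap Y)+\epsilon$ and $F\setminus Y$ by an open $W\subseteq X$ with $\lambda(W\cap Y)<\epsilon$, and then taking $U=V\cup W$, requires the estimate $\lambda\bigl(V\cup(W\cap Y)\bigr)\le\lambda(V)+\lambda(W\cap Y)$, i.e.\ subadditivity on open sets. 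That is exactly the property topological measures lack: by Theorem \ref{subaddit}, if $\lambda$ were subadditive it would extend to a Borel measure, which is precisely what you arranged it not to be. So the estimate does not close. One way around it is to approximate $F$ only by open sets of the form $U=X\setminus C$ with $C\in\mathscr{K}(X)$, $C\subseteq Y\setminus F$; then $U\cap Y=Y\setminus C$ and $\tilde\lambda(U)=\lambda(Y)-\lambda(C)$ by Theorem \ref{DTMtoTM}, and letting $\lambda(C)$ increase to $\lambda(Y\setminus F)$ reduces \ref{TM3} to the single identity $\lambda(Y)=\lambda(F\cap Y)+\lambda(Y\setminus F)$ for closed subsets of $Y$ --- still in need of justification when $F\cap Y$ is not compact, but no longer resting on subadditivity. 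As written, the second half of your argument is incomplete at this step.
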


\begin{proof} 
Use part (\ref{densEq2}) of Theorem \ref{densEq}. 
For the first statement, as an example of a finite deficient topological measure that is not a topological measure (hence, not a measure)  
one may use \cite[Example 46]{Butler:DTMLC},
For the second statement, as an example of a finite topological measure that is not a measure one may take \cite[Example 61]{Butler:TMLCconstr}.
\end{proof}

The proof of the next Theorem and Corollary are similar to the proof of Theorem \ref{densEq} and Corollary \ref{ExDensi}.

\begin{theorem} \label{densEqT}
Suppose $X$ is locally compact.
The following are equivalent:
\begin{enumerate}
\item \label{densEq1a}
$\mathbf{TM}(X) $ is nowhere dense in $\mathbf{DTM}(X)$. 
\item \label{densEq2a}
There exists a finite deficient topological measure that is not a topological measure.
\item \label{densEq3a}
There exists a nonzero finite proper deficient topological measure that is not a topological measure.
\end{enumerate}
\end{theorem}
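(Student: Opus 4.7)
The plan is to mirror the proof of Theorem \ref{densEq}, substituting $\mathbf{TM}(X)$ for $\mathbf{M}(X)$ and using Remark \ref{notTM} and the decomposition in Remark \ref{properDtm} at the appropriate points.

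First I would dispose of \ref{densEq1a} $\Longrightarrow$ \ref{densEq2a} as immediate: if every finite deficient topological measure were a topological measure, then $\mathbf{TM}(X) = \mathbf{DTM}(X)$ and $\mathbf{TM}(X)$ could not be nowhere dense in $\mathbf{DTM}(X)$.

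Next, for \ref{densEq2a} $\Longrightarrow$ \ref{densEq3a}, I would take a finite deficient topological measure $\mu$ that is not a topological measure and invoke Remark \ref{properDtm} to write $\mu = m + \mu'$, where $m$ is a finite Radon measure and $\mu'$ is a proper finite deficient topological measure. Since $m \in \mathbf{M}(X) \subseteq \mathbf{TM}(X)$ and the sum of two topological measures is a topological measure (this is checked directly from condition \ref{TM1}), if $\mu'$ were itself a topological measure the sum $\mu = m + \mu'$ would be too, contradicting our choice of $\mu$. Hence $\mu'$ is not a topological measure, and in particular $\mu' \neq 0$.

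For the final implication \ref{densEq3a} $\Longrightarrow$ \ref{densEq1a}, given a nonzero proper finite deficient topological measure $\nu$ that is not a topological measure, and given arbitrary $\mu \in \mathbf{TM}(X)$, I would consider the convex combinations
\begin{equation*}
\mu_n = \frac{1}{n}\,\frac{1}{\nu(X)}\,\nu + \left(1-\frac{1}{n}\right) \mu
\end{equation*}
on $\mathscr{O}(X) \cup \mathscr{C}(X)$. A positive scalar multiple of a deficient topological measure that is not a topological measure is again not a topological measure; then by Remark \ref{notTM}, $\mu_n$ is a deficient topological measure that is not a topological measure. Weak convergence $\mu_n \Longrightarrow \mu$ follows from Theorem \ref{AleksandrovLC} since $\mu_n(U) \to \mu(U)$ on open sets and $\mu_n(K) \to \mu(K)$ on compact sets. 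Thus $\mathbf{DTM}(X) \setminus \mathbf{TM}(X)$ is dense in $\mathbf{TM}(X)$, and combined with the fact (proved in the lemma preceding Theorem \ref{densEq}) that $\mathbf{TM}(X)$ is closed in $\mathbf{DTM}(X)$, we conclude that $\mathbf{TM}(X)$ is nowhere dense in $\mathbf{DTM}(X)$.

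There is no serious obstacle here; the only point requiring care is the verification in \ref{densEq2a} $\Longrightarrow$ \ref{densEq3a} that the sum of two topological measures is a topological measure, which is routine from \ref{TM1}, \ref{TM2}, \ref{TM3}. The rest is a direct adaptation of the argument for Theorem \ref{densEq}.
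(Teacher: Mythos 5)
Your proposal is correct and follows exactly the route the paper intends: the paper gives no separate proof of this theorem, stating only that it is "similar to the proof of Theorem \ref{densEq}," and your argument is precisely that adaptation (with the extra verification that a sum of topological measures is a topological measure, which the paper already has implicitly in Lemma \ref{inftsumD}, and the use of Remark \ref{notTM} for the convex-combination step).
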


\begin{corollary} \label{CdensEqT}
If a locally compact space $X$ contains a non-singleton compact connected set, then $\mathbf{TM}(X) $ is nowhere dense in $\mathbf{DTM}(X)$.
\end{corollary}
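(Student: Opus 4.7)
The plan is to reduce Corollary \ref{CdensEqT} to Theorem \ref{densEqT} by producing a single concrete finite deficient topological measure on $X$ that fails to be a topological measure; then condition \ref{densEq2a} of Theorem \ref{densEqT} is satisfied, and the conclusion that $\mathbf{TM}(X)$ is nowhere dense in $\mathbf{DTM}(X)$ follows immediately.

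To produce such a witness, I would use the hypothesis directly: let $D \subseteq X$ be a non-singleton compact connected set provided by the assumption. Define $\nu$ on $\mathscr{O}(X) \cup \mathscr{C}(X)$ by $\nu(A) = 1$ if $D \subseteq A$ and $\nu(A) = 0$ otherwise. This is precisely the construction from the third example in the introduction (citing \cite[Example 46]{Butler:DTMLC}), which the paper has already discussed. Because $D$ is compact and connected with more than one point, that example records that $\nu$ is a finite deficient topological measure but is not a topological measure, giving us a witness to condition \ref{densEq2a} of Theorem \ref{densEqT}.

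Therefore, by Theorem \ref{densEqT}, $\mathbf{TM}(X)$ is nowhere dense in $\mathbf{DTM}(X)$, which is exactly the desired conclusion. The main (and really only) obstacle here is verifying that the hypothesis of the corollary is strong enough to invoke the referenced example: a single non-singleton compact connected set suffices because the construction of $\nu$ uses only the connectedness of $D$ (to ensure $\nu$ is a deficient topological measure) and the fact that $D$ has more than one point (to prevent $\nu$ from being a topological measure, by \ref{TM1} of Definition \ref{TMLC}, as one can split $D$ into two disjoint closed pieces inside a suitable open set). Since all of this work has been carried out in \cite{Butler:DTMLC}, the proof reduces to citing it and applying Theorem \ref{densEqT}, exactly in parallel with how Corollary \ref{ExDensi} was deduced from Theorem \ref{densEq}.
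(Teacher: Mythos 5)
Your proposal is correct and matches the paper's own argument: the paper proves Corollary \ref{CdensEqT} exactly as it proves Corollary \ref{ExDensi}, namely by invoking condition (\ref{densEq2a}) of Theorem \ref{densEqT} with the deficient topological measure $\nu(A)=1$ iff $D\subseteq A$ from \cite[Example 46]{Butler:DTMLC} as the witness. No gaps.
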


\begin{remark}
When the space is compact, the equivalence of the first two conditions in Theorem \ref{AleksandrovLC} and of first three conditions in Theorem \ref{AleksandrovLCtm} 
was first given in \cite[Corollary 4.4, 4.5]{Svistula:Integrals}. 
When $X$ is compact Theorem \ref{wkBase} was proved in \cite{Svistula:Integrals}, but the method there does not work for a locally compact non-compact space, 
as the set $f^{-1}([0, \infty)) = X$ is not compact. 
Theorem \ref{basicTP} generalizes results from several papers, including \cite{Aarnes:QuasiStates70},  \cite{GrubbLaBerge:Spaces}, and  \cite{Svistula:Integrals}. 
Theorem \ref{metrization} is an adaptation of  \cite[Theorem 6.2]{Parthasarathy}.  
Our proof of Theorem \ref{wkfamTgt} is adapted from a nice proof in \cite[Theorem 2.3.4]{Bogachev:WkConv}.
In the last section we generalize results from \cite[Section 4]{Svistula:Integrals} and \cite{Density} from a compact space to a locally compact one. 
\end{remark}

{\bf{Acknowledgments}}:
The author would like to thank the Department of Mathematics at the University of California Santa Barbara for its supportive environment.

\section*{Conflict of interest}
 The author declares no conflict of interest.

\end{document}